\documentclass[12pt]{amsart}

\usepackage[]{graphicx}
\usepackage{mathrsfs}
\usepackage{amsmath}
\usepackage{amsfonts}
\usepackage{mathtools}
\usepackage{cite}
\usepackage{amsthm}
\usepackage{enumerate}
\usepackage[shortlabels]{enumitem}
\usepackage{mathtools}
\usepackage{amssymb}
\usepackage{appendix}
\usepackage{comment}

\theoremstyle{definition}
\newtheorem{remark}{Remark}
\theoremstyle{definition}
\newtheorem*{definition}{Definition}
\theoremstyle{theorem}
\newtheorem{theorem}{Theorem}
\newtheorem{lemma}[theorem]{Lemma}

\usepackage{xcolor}

\newcommand{\floor}[1]{\left\lfloor #1 \right\rfloor}

\usepackage{algpseudocode,algorithm,algorithmicx}
\newcommand*\Let[2]{\State #1 $\gets$ #2}
\algrenewcommand\algorithmicrequire{\textbf{Input:}}
\algrenewcommand\algorithmicensure{\textbf{Output:}}

\def\torus{\mathbb{T}}
\def\nat{\mathbb{N}}
\def\etalchar{}

\textwidth = 6.50truein
\textheight = 9.0truein
\hoffset = - .75truein
\voffset = -.75truein
\footskip = 18pt

\relax 
\citation{YaoL21}
\citation{Fenichel71,Pesin04}
\newlabel{introduction}{{1}{1}}
\citation{W74}
\citation{Sacker64,RuelleT71,MarsdenM76}
\citation{AndronovVK87,Minorsky62}
\citation{Winfree01}
\citation{Izhikevich07,ErmentroutT10}
\citation{HdlL13}
\citation{HdlL13}
\citation{YaoL21}
\citation{Llave97}
\citation{YaoL21}
\citation{HdlL13}
\citation{YaoL21}
\citation{YaoL21}
\citation{CFdlL03a,CFdlL03b,CFdlL05,H16}
\citation{AinsworthO}
\citation{N59,S64}
\citation{Moser66a}
\newlabel{implicitremark}{{1.2}{4}}
\citation{Schwartz60}
\citation{Zehnder75}
\citation{Llave97}
\citation{LlaveO99}
\citation{LlaveO99}
\citation{Vano02,CallejaL10,CallejaCL13}
\citation{HdlL06b,CanadellH17a,CanadellH17b,H16,Granados}
\citation{BatesLZ08}
\citation{CapinskiZ15}
\citation{Mather68,Adomaitis07}
\citation{CallejaCL13,CanadellH17b}
\citation{BatesLZ08}
\citation{Levi81,WangY02,WangY03}
\citation{BonattiDV05}
\citation{GoldenY88,Rand92a,Rand92b,Rand92c,HaroL06,HaroL07,BjerklovS08,CallejaF12,FiguerasH12}
\citation{YaoL21}
\citation{YaoL21}
\citation{BatesLZ00}
\citation{ChungJ15}
\citation{Szalai19}
\citation{LlaveK19}
\citation{H16}
\citation{HdlL13}
\newlabel{setup}{{2}{7}}
\newlabel{setup_parameterization}{{2.1}{7}}
\citation{CanadellH17a,CanadellH17b}
\citation{Moser66a,Moser66b}
\citation{H16}
\newlabel{parameterization}{{1}{8}}
\newlabel{invariance_old}{{1}{8}}
\newlabel{setup_invariance}{{2.2}{8}}
\newlabel{invariance_origin}{{2}{8}}
\newlabel{setup_undetermincy}{{2.3}{9}}
\citation{dlL01}
\newlabel{undetermination}{{3}{10}}
\newlabel{invariance}{{4}{10}}
\newlabel{rmk:constantlambda}{{9}{10}}
\newlabel{log_coho}{{5}{10}}
\newlabel{coho_for_r}{{6}{10}}
\citation{Fenichel73,Fenichel77}
\citation{Fenichel73,Fenichel77}
\citation{YaoL21}
\newlabel{setup_isochrones}{{2.4}{11}}
\newlabel{iterates}{{7}{11}}
\newlabel{lambdaiterated}{{8}{11}}
\newlabel{cocyle}{{9}{11}}
\newlabel{rmk:dynamicalaverage}{{10}{11}}
\newlabel{dynamicalaverage}{{10}{11}}
\citation{YaoL21}
\citation{YaoL21}
\newlabel{algorithm}{{3}{12}}
\newlabel{algorithm_derivation}{{3.1}{12}}
\newlabel{error}{{11}{12}}
\newlabel{newton0}{{12}{12}}
\newlabel{newton}{{13}{12}}
\newlabel{newton2}{{14}{13}}
\newlabel{frame}{{15}{13}}
\newlabel{error2}{{16}{13}}
\newlabel{cohom}{{17}{13}}
\newlabel{eq1}{{18}{13}}
\newlabel{eq2}{{19}{13}}
\newlabel{solution_quasi_newton}{{3.2}{13}}
\citation{ZdlL18}
\citation{H16}
\newlabel{eq1_order0}{{20}{14}}
\newlabel{eq1_higher}{{21}{14}}
\newlabel{eq2_order0}{{22}{14}}
\newlabel{eq2_order1}{{23}{14}}
\newlabel{eq2_higher}{{24}{14}}
\newlabel{coho}{{25}{14}}
\citation{YaoL21}
\citation{YaoL21}
\citation{YaoL21}
\citation{YaoL21}
\newlabel{sol_coho_eq}{{3.3}{15}}
\newlabel{coho_steps}{{26}{15}}
\newlabel{coho_solution}{{27}{15}}
\newlabel{finite_reg}{{28}{15}}
\newlabel{faster_coho}{{13}{15}}
\newlabel{algorithm_algorithm}{{3.4}{15}}
\citation{YaoL21}
\citation{YaoL21}
\citation{LlaveO99}
\newlabel{algorithm_n}{{1}{16}}
\newlabel{space}{{4}{16}}
\citation{Zehnder75,Stein70,LlaveO99}
\citation{LlaveO99,dlLW11}
\citation{Stein70}
\citation{Moser66a,Zehnder75}
\newlabel{result_setup}{{4.1}{17}}
\newlabel{Crspacese}{{4.1.1}{17}}
\citation{BroerST98}
\newlabel{Xr_space}{{4.1.2}{18}}
\newlabel{several_spaces}{{20}{18}}
\citation{LlaveO99}
\citation{LlaveO99}
\newlabel{proof_basic}{{4.2}{19}}
\newlabel{inequal_Halpha}{{1}{19}}
\newlabel{inequal_more_Cr}{{2}{19}}
\citation{LlaveO99}
\citation{LlaveO99}
\citation{LlaveO99}
\citation{Zehnder75}
\newlabel{lemma_inequalityXr}{{3}{21}}
\newlabel{def:abstract_smoothing}{{4.2.2}{21}}
\newlabel{smoothing1}{{29}{21}}
\newlabel{smoothing2}{{30}{21}}
\newlabel{C_r_smoothing}{{24}{21}}
\newlabel{smoothing0}{{31}{21}}
\citation{Zehnder75,dlL01,LlaveO99}
\citation{LlaveO99}
\newlabel{interpolation}{{32}{22}}
\newlabel{sec:triples}{{4.3}{22}}
\newlabel{def:triples}{{4.3}{22}}
\newlabel{result}{{5}{22}}
\newlabel{functional}{{33}{23}}
\newlabel{condition0}{{5}{23}}
\newlabel{main}{{4}{23}}
\citation{YaoL21}
\citation{Llave97}
\citation{Mane}
\citation{JarnikK69,CapinskiK20}
\citation{YaoL21}
\citation{GoldenY88,Rand92a,Rand92b,HaroL06,HaroL07,CallejaF12,FiguerasH12}
\citation{Poincare90}
\newlabel{proof}{{6}{25}}
\newlabel{proof_unique}{{6.1}{25}}
\newlabel{existence_h2}{{6}{25}}
\citation{Sternberg57}
\citation{Poincare78,Dulac04}
\newlabel{existence_equation}{{34}{26}}
\newlabel{expression_h}{{35}{26}}
\newlabel{contraction_0}{{36}{26}}
\newlabel{G_m}{{37}{26}}
\newlabel{proof_coho}{{6.2}{27}}
\newlabel{lemma_coho}{{7}{27}}
\newlabel{coho_sol}{{38}{27}}
\newlabel{general_coho}{{37}{28}}
\newlabel{finite_reg_2}{{39}{28}}
\newlabel{weierstrass}{{40}{28}}
\newlabel{proof_proof}{{6.3}{28}}
\newlabel{lemma_condition1}{{8}{28}}
\newlabel{fbound}{{41}{29}}
\newlabel{lemma_condition2}{{9}{30}}
\newlabel{lemma_condition3}{{10}{31}}
\newlabel{lemma_condition4}{{11}{31}}
\citation{Schwartz60}
\citation{Vano02,CallejaL10,CallejaCL13}
\newlabel{lemma_condition5}{{12}{32}}
\newlabel{appendix}{{A}{32}}
\newlabel{NMIFT}{{13}{32}}
\newlabel{smooth_newton}{{42}{33}}
\newlabel{etaf}{{43}{33}}
\newlabel{unun}{{44}{33}}
\newlabel{NMcondition1}{{45}{34}}
\newlabel{NMcondition2_0}{{46}{34}}
\newlabel{NMcondition2_1}{{47}{34}}
\newlabel{NMcondition2_2}{{48}{34}}
\newlabel{NMcondition2_3}{{49}{34}}
\newlabel{NMcondition2}{{50}{35}}
\newlabel{NMcondition3}{{53}{35}}
\bibstyle{alpha}
\bibdata{theorical_ref}
\bibcite{Adomaitis07}{Ado07}
\bibcite{AinsworthO}{AO00}
\bibcite{AndronovVK87}{AVK87}
\bibcite{BonattiDV05}{BDV05}
\bibcite{BatesLZ00}{BLZ00}
\bibcite{BatesLZ08}{BLZ08}
\bibcite{BjerklovS08}{BS08}
\bibcite{BroerST98}{BST98}
\bibcite{CallejaCL13}{CCdlL13}
\bibcite{CallejaL10}{CdlL10}
\bibcite{CallejaF12}{CF12}
\bibcite{CFdlL03a}{CFdlL03a}
\bibcite{CFdlL03b}{CFdlL03b}
\newlabel{constrictions}{{54}{36}}
\bibcite{CFdlL05}{CFdlL05}
\bibcite{CanadellH17a}{CH17a}
\bibcite{CanadellH17b}{CH17b}
\bibcite{ChungJ15}{CJ15}
\bibcite{CapinskiK20}{CK20}
\bibcite{CapinskiZ15}{CZ15}
\bibcite{Llave97}{dlL97}
\bibcite{dlL01}{dlL01}
\bibcite{LlaveK19}{dlLK19}
\bibcite{LlaveO99}{dlLO99}
\bibcite{dlLW11}{dlLW11}
\bibcite{Dulac04}{Dul03}
\bibcite{ErmentroutT10}{ET10}
\bibcite{Fenichel77}{Fen77}
\bibcite{Fenichel71}{Fen72}
\bibcite{Fenichel73}{Fen74}
\bibcite{FiguerasH12}{FH12}
\bibcite{GoldenY88}{GE88}
\bibcite{Granados}{Gra17}
\bibcite{H16}{HCF{$^{+}$}16}
\bibcite{HaroL06}{HdlL06a}
\bibcite{HdlL06b}{HdlL06b}
\bibcite{HaroL07}{HdlL07}
\bibcite{HdlL13}{HdlL13}
\bibcite{Izhikevich07}{Izh07}
\bibcite{JarnikK69}{JK69}
\bibcite{Levi81}{Lev81}
\bibcite{Mather68}{Mat68}
\bibcite{Minorsky62}{Min62}
\bibcite{MarsdenM76}{MM76}
\bibcite{Mane}{Mn78}
\bibcite{Moser66b}{Mos66a}
\bibcite{Moser66a}{Mos66b}
\bibcite{N59}{Nei59}
\bibcite{Pesin04}{Pes04}
\bibcite{Poincare78}{Poi79}
\bibcite{Poincare90}{Poi90}
\bibcite{Rand92a}{Ran92a}
\bibcite{Rand92b}{Ran92b}
\bibcite{Rand92c}{Ran92c}
\bibcite{RuelleT71}{RT71}
\bibcite{Sacker64}{Sac64}
\bibcite{S64}{Sac09}
\bibcite{Schwartz60}{Sch60}
\bibcite{Sternberg57}{Ste57}
\bibcite{Stein70}{Ste70}
\bibcite{Szalai19}{Sza20}
\bibcite{Vano02}{Van02}
\bibcite{Winfree01}{Win01}
\bibcite{W74}{Win75}
\bibcite{WangY02}{WY02}
\bibcite{WangY03}{WY03}
\bibcite{YaoL21}{YdlL21}
\bibcite{ZdlL18}{ZdlL18}
\bibcite{Zehnder75}{Zeh75}
\newlabel{tocindent-1}{0pt}
\newlabel{tocindent0}{68.86317pt}
\newlabel{tocindent1}{77.67142pt}
\newlabel{tocindent2}{34.5pt}
\newlabel{tocindent3}{0pt}
\gdef \@abspage@last{39}

\begin{document}

\title[]{COMPUTING THE INVARIANT CIRCLE AND
ITS STABLE MANIFOLDS
FOR A 2-D MAP BY THE PARAMETERIZATION METHOD: \\
EFFECTIVE ALGORITHMS AND
RIGOROUS PROOFS OF CONVERGENCE}

\author{Yian Yao}
\address{School of Mathematics\\ 
Georgia Institute of Technology \\
686  Cherry St. \\ 
Atlanta GA 30332-160}
\email{yyao93@gatech.edu}
\author{Rafael de la Llave} 
\address{School of Mathematics\\ 
Georgia Institute of Technology \\
686  Cherry St. \\ 
Atlanta GA 30332-160}
\email{rafael.delallave@gatech.edu} 
\thanks{Supported in part by NSF DMS-1800241}

\maketitle

\begin{abstract}

We present and analyze rigorously 
a quadratically convergent algorithm to compute an invariant circle
for 2-dimensional maps 
along with the corresponding
foliation by stable manifolds.
The algorithm is based on solving an
invariance equation
using a quasi-Newton method.

We prove that when the algorithm starts from an initial guess that
satisfies the invariance equation very approximately (depending on
some condition numbers, evaluated on the approximate solution), then
the algorithm converges to a true solution which is close to the
initial guess. The convergence is faster than exponential in smooth
norms.

We also conclude that (in a smooth norm), the distance from 
the exact solution and the approximation is bounded by the initial error. 
This allows validating the numerical approximations (a-posteriori results). 
It also implies the usual persistence formulations since the exact 
solutions of the invariance equation for a model are approximate solutions 
for a similar model. 

The algorithm we present works irrespective of whether the dynamics
on the invariant circle is a rotation or it is phase-locked. The
condition numbers required do not involve any global qualitative
properties of the map.
They are  obtained by evaluating derivatives of the initial
guess, derivatives of the map in a neighborhood of
the guess, performing algebraic operations and
taking suprema.

The proof of the convergence is based on a general Nash-Moser implicit function
theorem specially tailored for this problem. The Nash-Moser 
procedure has unusual properties.  As it turns out, 
the regularity  requirements are not very severe (only 2 derivatives suffice).
We hope that this implicit function 
theorem may be of independent interest and have presented it in a self-contained 
appendix. 

The algorithm in this paper is very practical since it converges quadratically,
and it requires moderate storage and operation count. 
Details of 
the implementation and results of the runs are described in a companion 
paper \cite{YaoL21}. 
\end{abstract}

\textbf{keywords:} \keywords{invariant circles, isochrons, parameterization
method, Nash-Moser implicit function theorem, phase-locked regions}
\subjclass[2021]{
37M22, % (2020-now) Computational methods for attractors of dynamical systems
37M21, % (2020-now) Computational methods for invariant manifolds of dynamical
% systems
37D10, %  Invariant manifold theory for dynamical systems.
47J07, %  Abstract inverse mapping and implicit function theorems involving
% nonlinear operators.
37C86, % Foliations generated by dynamical systems.
% 34D45 % Attractors of solutions to ordinary differential equations
46-08 % Computational methods for problems pertaining to functional analysis
}

\section{Introduction} \label{introduction}

In the modern theory  of dynamical systems, the study of the invariant manifold
and their corresponding stable manifolds 
plays a key role.
The dynamics on
these objects organize the dynamics in the whole phase space. 

In this paper, we study attractive (or repulsive) invariant circles in
2-dimensional maps as well as the stable (unstable) manifolds of
points. The collection of such manifolds forms a foliation in a neighborhood of the torus.

We recall that according to the theory of normally hyperbolic
manifolds \cite{Fenichel71, Pesin04},  $W^s_x$, the stable manifolds of a point $x$ in the invariant circle,
are the points whose orbits converge \emph{with a fast enough exponential
rate} to the orbit of $x$. 

\begin{remark}
The paper 
 \cite{W74} defined isochrons as the set of  points with the same asymptotic phase 
on the limit cycle. This is 
not equivalent to the stable manifolds in  the sense 
of normally hyperbolic theory. In the theory of normally hyperbolic manifolds,
the stable leaves are characterized by a fast enough convergence to 
the limit cycle.

When the dynamics in the invariant 
circle contains an attractive 
and a repelling periodic orbit (which are attractive and hyperbolic 
for the full map), 
the points in the plane whose orbit is asymptotic to the stable periodic orbit includes
an open set.  On the other hand, the stable manifolds in the sense of normally 
hyperbolic theory will be one dimensional manifolds. At the periodic orbit, 
the stable manifold in the sense of normally hyperbolic theory is 
the ``strong stable manifold''  in the theory of invariant manifolds 
at fixed points. 

In this the paper, for the sake of having manageable sentences,  we will occasionally 
use  ``isochron''	
to mean \emph{`` leaves of the foliation by the stable manifolds in the sense of normally hyperbolic theory''.}
\end{remark}

The 2-dimensional maps we consider appear in several applications. 
For example, as reductions of higher dimensional systems to 
two-dimensional manifolds after a Neimark-Sacker bifurcation 
\cite{Sacker64, RuelleT71,MarsdenM76}. Another case that motivates us
is the periodic perturbation of a 2-D ordinary differental equation  with a limit cycle.
Such examples are very common in practice. For example,
when oscillating circuits with a limit cicle  are subject to AC forcing
\cite{AndronovVK87, Minorsky62} or in Biology when the circadian
rhythms are subject to external forcing \cite{Winfree01}.
Also when neurons are subject to the periodic forcing of
others \cite{Izhikevich07, ErmentroutT10}

The interpretation of periodic forcing of limit cycles is
useful to keep in mind since the  methods we apply are
inspired by those in \cite{HdlL13}, which considered limit cycles and their
manifolds in 2D autonomous ODEs.  As in \cite{HdlL13}, 
our goal will be to find a system of coordinates that 
turns the dynamics in a neighborhood of the limit cycle into a simple one. 
We will take advantage of several identities to obtain a fast quasi-Newton method. 

\begin{remark}
Passing from 2-D differential equations 
to 2-D maps (or 3-D differential equations) is  non-trivial
since new dynamical phenomena appear. The most notorious one 
is that, for 2-D maps, the  
dynamics in the invariant circle could be  phase-locked. 
That is, the dynamics restricted to the invariant circle
could have an attracting periodic orbit and a repelling one. 

Similarly, 
passing from 2-D maps to  3-D maps involves the  new phenomenon of normal resonances,
which is briefly discussed in \cite{YaoL21}).
\end{remark}

From a more technical point of view in the study of 2-D maps, we do not expect
that the invariant circle or the foliation by stable manifolds of
points are analytic but only
finitely differentiable even if the map 
is analytic (in this paper, we will consider
only analytic mappings) See Section 8.2 of \cite{Llave97} and later in 
this paper. 

On the other hand, each of 
the stable manifolds of a point will be shown to be analytic.
This anisotropic regularity of 
the parameterizations of the foliation by stable manifolds 
-- one of the unknowns in the invariance equation  --  has to be taken into account when 
choosing  the spaces for the formulation of the implicit function theorem. It also 
affects the choices 
of discretizations  in the implementations discussed in  the companion paper
 \cite{YaoL21}. Anisotropic regularity is very typical in the theory
of Normally Hyperbolic Invariant Manifolds (NHIM). It 
so happens that the NHIM has a regularity limited  by ratios of 
rates of convergence while the stable manifolds of a point have 
a regularity limited only by the regularity of the map. This 
anisotropic regularity does not happen in the 2-D ODE case. In \cite{HdlL13} 
it is shown that for 2D analytic ODE, 
both the circle and the foliation by stable leaves are analytic. 
The anisotropic regularity is an important novelty going from 2D ODE 
to 2D maps.

The goal of this paper is to provide a framework 
to study these objects (invariant  circles and their stable foliations) in 2-D maps
in a non-perturbative way which also 
leads to reliable abd efficient  numerical algorithms. The numerical 
algorithms we present and justify here converge to 
the true solution faster than exponentially. 
Hence, mathematical results presented 
here also allow us to validate the results of the numerical algorithms.

The proof of the convergence of the algorithm is 
based on an abstract implicit function theorem of Nash-Moser type 
with some differences from other similar theorems, 
but which we hope could be useful for several 
problems in dynamics and related areas.  See Section~\ref{implicitremark} for 
some comparison with other hard implicit function theorems in the literature. 
The algorithm is based on taking advantage of several cancellations 
that allow to get better estimates. It is shown in \cite{YaoL21} 
that the same cancellations that allow to get better estimates, also 
allow to lower the storage requirements of the algorithm and the operations 
needed for a step. The storage requirements
and the operation count per step are  proportional 
to the number of discretization points.

The numerical algorithms described here have been implemented. 
Details on the implementation, some numerical results
and investigation of phenomena that happen at the boundary of 
validity of our results  are described in 
a companion paper \cite{YaoL21}.

\subsection{Description of the Method} 

Following the idea of the parameterization method
\cite{CFdlL03a, CFdlL03b, CFdlL05, H16}, we formulate an
invariance equation (see \eqref{invariance_origin}). This equation 
has two unknowns:
\begin{itemize} 
\item{a)}  embedings of the circle
and its stable manifolds
\item{ b)} the dynamics of the map restricted to 
the invariant objects (the dynamics on the invariant circle and 
the dynamics on the leaves of the foliations). 
\end{itemize}

This invariance equation~\eqref{invariance_origin} 
expresses that the circle is invariant, 
that the stable foliation  is invariant
 (the leaves of the foliation are not invariant 
but they get sent to another  leaf of the foliation by the dynamics). 

We prove that, given an approximate solution of \eqref{invariance_origin}, we
can evaluate some condition numbers on this approximate solution.
If the error in \eqref{invariance_origin}  is smaller than an 
explicit function of the condition numbers, then there is a true solution of the
invariance equation. Furthermore, the true solution is close to the approximate one.
The condition numbers will be obtained by computing several observations of the 
approximate solution. The condition numbers 
do not involve any global assumptions on the map beyond some estimates 
on the derivatives in a neighborhood of the approximate solution. 
Such results are called \emph{a-posteriori} theorems in the numerical literature
\cite{AinsworthO}.

A-posteriori results imply the usual persistence results under
perturbations of dynamical systems. If one can find  a system
with these structures (invariant circle and its stable manifolds),
then, for  a small perturbation of the system, the original invariant
objects provide an approximate invariant object for the perturbed
system.

The a-posteriori results are also of great use in numerical analysis
since they can provide criteria that ensure that the outputs of
numerical computations -- which are approximate solutions of the
invariance equation -- can be trusted if we supplement them with a 
calculation of the 
condition numbers.  Having very explicit condition numbers and
results that allow trusting the calculation is invaluable 
when studying the phenomena that happen near the breakdown of
the invariant objects and elementary tests (reruns, changing discretizations and
the like)
may get confusing. 
Furthermore, if the evaluation of the errors and the condition
numbers are done taking care of all sources of error (truncation, round
off, etc),  one obtains a computer-assisted proof. Besides their
use in numerical analysis, a-posteriori theorems can be used to
validate the results of other non-rigorous techniques such as
asymptotic expansions (these sophisticated expansions are useful 
in the study of degenerate Neimark-Sacker bifurcations \cite{N59, S64}). 

The way that one often proves an a-posteriori theorem is by describing
an algorithm that given an approximate solution produces an even more
approximate one and then showing that, if one starts from an
approximate enough solution, the process converges.

In our case, we will develop a  modification of 
the standard Newton's method to solve the
invariance equation both for the parameterization of 
the  invariant circle, the invariant
foliation and for their dynamics. We will show that,  when
started from an approximate enough solution, this quasi-Newton method 
converges to a true solution.

To obtain the quasi-Newton method, we start with 
standard Newton method for the functional equation, but take into
account that due to the structure of the problems, there are several
useful identities. Using these identities coming from the geometry
(related to the ``group structure'' in \cite{Moser66a})   we
can obtain an algorithm that is much easier (and much faster and
reliable when implemented numerically) than the straightforward
Newton method without affecting the essential feature of the Newton
method, namely that the error after one step is roughly quadratic
with respect to the original error. It is interesting that the 
same identities that are used to obtain convergence of the rigorous 
proof lead also to a more efficient and reliable algorithm. 
We will refer to this iterative method as a \emph{``quasi-Newton''} method. 

To prove the convergence of the quasi-Newton method, we rely on a Nash-Moser
technique, combining the Newton step with a smoothing step. In the
self-contained Appendix~\ref{appendix}, we present an abstract result,
Theorem~\ref{NMIFT}, which we hope could be applicable in similar
problems.

As we will see, the equation~\eqref{invariance_origin}
is underdetermined.  This underdetermination is quite useful since 
it allows to develop more efficient numerical methods.  As it is
well known, the geometric objects (invariant circle and the stable foliation
are locally unique. The underdeterminacy, is only about the parameterization. 
The same geometric  object can be given different parameterizations. Some of 
them will be numerically more efficient.

\subsection{Some Remarks on Comparison of the Nash-Moser 
Theorem   with Other Results}
\label{implicitremark}

For the experts in Nash-Moser theory, we point out that
Theorem~\ref{NMIFT} developed in Appendix~\ref{appendix} has several unusual
properties.  

Of course, this subsection can be omitted in the first reading,
but we hope this could 
serve as motivation for some of the analysis.

\begin{itemize} 
\item 
The linearized equation 
can be solved without loss of regularity for a range of 
regularities, but there is no theory of solutions for more regular 
data.

This is very different from 
the Nash-Moser applications  in small divisor problems or in PDE, in which one 
can solve the linearized equation 
in spaces of functions 
with any regularity
 (including analytic) but the solution incurs a loss of regularity. 

As a consequence, in our problem, we   cannot use usual smoothing techniques 
of approximating by analytic or $C^\infty$ solutions. 
The only smoothing technique we can use is approximations 
by $C^r$ functions (the so-called $C^r$ smoothing).

We found inspiring 
the abstract implicit function theorems from \cite{Schwartz60} and \cite{Zehnder75}.

\item
We will need to  consider
spaces of functions with mixed
regularity. The functions we will consider are 
$C^r$ smooth in one of the variables ($\theta$), but analytic in the other variable $(s)$. 

The function spaces we use  have two indices, 
one to measure the number of derivatives in 
the first variable and another one to measure the size of analyticity domains 
in the second variable.

 These spaces are indeed forced by the nature of the problem.  
It is known that the invariant circles could be only finitely differentiable 
\cite{Llave97} -- the degree of differentiability 
is limited, 
not just by the 
 regularity of the map, but also by the ratios of the eigenvalues at the periodic orbits.
 On the other hand, the leaves of the stable 
foliation are always analytic.  It is known that similar anisotropic regularities
happen in the theory of normally hyperbolic invariant manifolds. 
 We hope that many of the techniques developed 
here could have wider applicability.

\item 
The nonlinear operator involved in the functional equation  
is basically the composition operator -- which has very unusual 
regularity properties in $C^r$ spaces, see \cite{LlaveO99}. 

This operator maps $C^r$  spaces into themselves. However, it is not differentiable 
from $C^r$ to $C^r$ but it is differentiable when the domain and 
the range are given  other topologies. See 
\cite{LlaveO99} for an exhaustive study. 

Hence, computing the remainder of the functional after a 
correction involves losses of derivatives in $C^r$ spaces.   On the other hand, 
when considering Banach spaces of analytic functions, provided that the 
domains and ranges allow the composition, the composition operator is 
differentiable (even analytic). 

Since the composition operator
appears very commonly in the study of invariance equations in dynamical 
systems, maybe some of the techniques developed in this paper 
may have other applications.

\item
As we will see in the detailed calculations, we will only need 
to smooth in the finite differentiable variable $(\theta)$, 
but we do not need to smooth in the analytic variable $(s)$. 

\item  The iteration we use takes advantage of 
some identities obtained by taking derivatives of the invariance equation.
(From the practical point of view, the use of these identities is crucial 
to obtain quadratically convergent algorithms that require small storage 
and small operation count per step of iteration.) 

This entails that the remainder after applying 
the Newton method  contains a term that involves 
the derivative of remainder of the starting approximation times 
the correction. This term is very common in 
many problems of dynamical systems that are solved taking advantage of 
\emph{automatic reducibility}.  Such terms do not appear in many 
other abstract Nash-Moser theorems. Some abstract theorems that incorporate 
the similar terms appear in
\cite{Vano02,CallejaL10,CallejaCL13}. 
\item
The equation considered is underdetermined 
so that the linearized equation will have a kernel. 
\item
The loss of regularity incurred in our result: Theorem~\ref{NMIFT} 
 is much smaller than 
the loss of regularities in  other abstract hard implicit function theorems. 
\end{itemize}

\begin{remark}
Newton or quasi-Newton 
methods to compute invariant objects with the parameterization method 
have been used for a long time in the numerical 
literature \cite{HdlL06b, CanadellH17a,CanadellH17b, H16, Granados}
since they were found empirically to be efficient, and the solutions obtained 
could be validated using the more conventional methods
(either contraction-based methods \cite{BatesLZ08} or 
topological methods \cite{CapinskiZ15}).  

In implementing Newton methods for invariance equation, turns out that 
out of the box Krylov-Arnoldi, etc. methods doew not to work very 
well since the spectrum of equations involving invariance problems
are invariant under rotations \cite{Mather68, Adomaitis07}.

Note also that the Newton or quasi-Newton  methods are much more 
effective than contraction based graph transform methods, especially when the
contracting exponents are close to one.  This is physically the regime of 
small friction, which is receiving great attention since in many 
practical problems, reducing dissipation is a design goal. 

In the case that  the internal dynamics (denoted by $a$ in later sections)
 is fixed to be a rotation, one can solve
 the cohomology equations by Fourier methods so that the computation remains valid
 even for very weak contraction properties (which would require a large 
number of iterates by graph transform methods). 
This case has been studied in the literature several times.
\cite{CallejaCL13,CanadellH17b}. 
It is interesting that, in this case, the method requires 
the use of small divisors. Even if small divisors are 
not required in the linearized invariance equation, but to keep the 
internal dynamics being a rotation.
\end{remark}

\begin{remark} 
Studying simultaneously the equation for the circle
and the foliation is, paradoxically, more efficient than studying first the
circle and then the foliation. 

The reason for this speedup  is that the approximate solutions for the
foliation are very powerful preconditioners for the invariance
equation for the circle. 
\end{remark}

\begin{remark}
Besides using the Nash-Moser method, there are 
other methods 
that also lead to an a-posteriori format by using a contraction in $C^0$ and 
propagated bounds in higher regularity.
\cite{BatesLZ08}. Such contraction methods may  give better regularity results than the Nash-Moser 
methods presented in this paper,
\end{remark}

\begin{remark} 
The models we consider -- limit cycles subject to periodic
perturbations are known to present regimes of parameters where the
phenomena studied here breaks down and some complex behaviors appear: 
\cite{Levi81,WangY02,WangY03}. The study of the boundary between the
regular behavior presented here and the chaotic behavior is a very
interesting mathematical problem \cite{BonattiDV05}.  Some elements of the
boundary of validity of the results have been explored in
\cite{GoldenY88, Rand92a,Rand92b, Rand92c, HaroL06, HaroL07,
  BjerklovS08, CallejaF12,FiguerasH12}. It is clear that there can be
several interesting phenomena at play and that a systematic
exploration of the boundary will yield a very rich variety of
behaviors.  Inspired by this paper, the numerical algorithms implemented in
\cite{YaoL21}
can, in principle, continue the results in the space of parameters to
reach arbitrarily close to parameters where the objects described here
break down 
(The precise definition of breakdown is somewhat subtle, please refer to
\cite{YaoL21} for more detailed discussions).
One can hope that these numerical explorations of the
frontier of hyperbolicity -- which will require substantial effort -- could yield some
new ideas. Having mathematical tools that allow being confident of
numerical results even if they are unexpected, will be important to
discover new phenomena.
\end{remark} 

\begin{remark} 
In this paper, we will specialize in the case of maps in two dimensions, 
but many of the techniques that we develop -- including the abstract 
implicit function theorem -- applies in any number of dimensions.
The adaptation, however, is 
not completely straightforward since new phenomena may appear, related to 
resonances among normal eigenvalues and the dynamics in the stable foliation 
will have to be more complicated. We hope to come back to this problem, 
but anticipate that the dynamics in the stable manifold has to involve 
more parameters. 

We also  note that in the case of higher dimensional manifolds, there are
more complicated foliations defined in a neighborhood. These foliations are, 
in general not unique, but they have been found useful to describe 
the behavior in a neighborhood of a normally hyperbolic manifold
\cite{BatesLZ00}.  We also call attention to the very interesting 
numerical paper \cite{ChungJ15} and its associated numerical 
package {\tt FOLI8PAK} which deals with similar problems. 
 We hope that the present method can be adapted to 
the study of these manifolds or even to some non-resonant foliations. 
The paper \cite{Szalai19} points that these invariant objects may be useful 
in data reduction (see also \cite{LlaveK19}). We hope to come back to these problems. 
\end{remark} 

\subsection{Organization of the Paper}

In this paper, we have chosen to present the motivation before the main
statement, since the motivation leads to 
a practical algorithm. 
Many of the choices in the precise 
formulation are motivated by the need 
to give a precise formulation 
to the calculations. Of course, 
the readers interested only 
on the precisely formulated 
mathematical results  can skip to Section~\ref{result}.

The paper is organized as follows. In Section~\ref{setup}, we formulate
an invariance equation by the parameterization method, which is the
essential object in this paper. 

The algorithm for solving the invariance
equation is discussed and motivated in Section~\ref{algorithm}. 

The rigorous result in the convergence of the algorithm 
(Theorem~\ref{main})
for the
existence of the solution and the convergence for the algorithm is presented in
Section~\ref{result}.

The proof of Theorem~\ref{main} is presented in Section~\ref{proof}, where
we establish estimates on the ingredients of the algorithm.

The final step of the proof of Theorem~\ref{main} is a
modified version of 
a  Nash-Moser implicit function theorem (Theorem~\ref{NMIFT}) 
which we present in 
the self-contained  Appendix~\ref{appendix}. We hope that Theorem \ref{NMIFT}
can be of
independent interest since it could be applicable to
similar problems.

\section{Setup of the Problem} \label{setup}

In this section, we first briefly introduce the general idea
of the parameterization method (Section~\ref{setup_parameterization}). 
More detailed  discussions about this method are in \cite{H16}.

Then, in a manner inspired by \cite{HdlL13}, we 
formulate an invariance equation~\eqref{invariance_origin} for the 
invariant circle and stable foliation near it. 
 (Section~\ref{setup_invariance}) 

It is important to notice that the invariance equation 
\eqref{invariance_origin} is very underdetermined.  Taking 
advantage of this underdetermination, 
in Section~\ref{setup_undetermincy}, we find a version of 
the invariance equation with extra properties. 
In Section~\ref{setup_isochrones}, we discuss the 
stable manifolds.

\subsection{The General Setting for the Parameterization
  Method for Invariant Objects} \label{setup_parameterization}

We start by describing the general idea of the parameterization method
for finding  invariant manifolds. In the later discussion, we will use the
generalized version which allows to find also invariant foliations. 

In a phase space $\mathscr{A}$, $f: \mathscr{A}
\rightarrow \mathscr{A}$ is a diffeomorphism that generates a discrete dynamical
system, the goal is to find an $f$\text{-}invariant submanifold
$\mathscr{K} \subset \mathscr{A}$, i.e. $f(\mathscr{K}) \subseteq \mathscr{K}$. Consider $K: \Theta \rightarrow \mathscr{A}$
to be an injective immersion from some model manifold $\Theta$ that parameterizes
$\mathscr{K}$. We have that $\mathscr{K}$ is $f$\text{-}invariant if
and only if the following invariance equation holds:
\begin{equation} \label{invariance_old}
  f \circ K(\theta) = K \circ a(\theta),
\end{equation}
where the diffeomorphism $a: \Theta \rightarrow \Theta$ is the internal dynamics on $\Theta$, and
$\theta$ is the local coordinate in $\Theta$ (See Figure \ref{parameterization}).
The goal now becomes solving Equation~\eqref{invariance_old} with $K(\theta), a(\theta)$ as the unknowns.

There are several methods to 
solve equation~\eqref{invariance_old} depending on the class of 
dynamical systems used. 

A widely applicable idea (and the one we will be 
concerned with here) is to apply the Newton (or quasi-Newton)
iterative method to find the correction $\Delta_K(\theta)$ and
$\Delta_a(\theta)$ that improves approximate $K(\theta)$ and $a(\theta)$.
By constructing
an adapted frame $P(\theta)$, and representing $\Delta_K(\theta) = P(\theta)
\phi(\theta)$, solving the Newton method for the equation~\eqref{invariance_old} amounts
solving cohomological equations of the form as in equation~\eqref{coho},
which can then be solved under hyperbolicity assumptions. We will present 
algorithms and establish their convergence.

\begin{remark}
  In the case when the rotation number for the internal dynamics is fixed to be
  a given Diophantine number $\omega$, $a(\theta) = \theta + \omega$ is no longer an
  unknown in \eqref{invariance_old}. On the other hand, one 
has to adjust parameters. See  \cite{CanadellH17a, CanadellH17b} for the
theory for invariant circles. 

An alternative theoretical point of view for the adjustment of
parameters is that, if we consider a family with parameters $\lambda$,
our method will obtain a family of circle mappings
$a_\lambda$. Adjusting parameters $\lambda$ as in
\cite{Moser66a,Moser66b}, we obtain that the map $a_\lambda$ is
smoothly conjugate to a Diophantine rotation.
\end{remark}

\begin{figure}
  \centering
  \includegraphics[width=0.4\textwidth]{./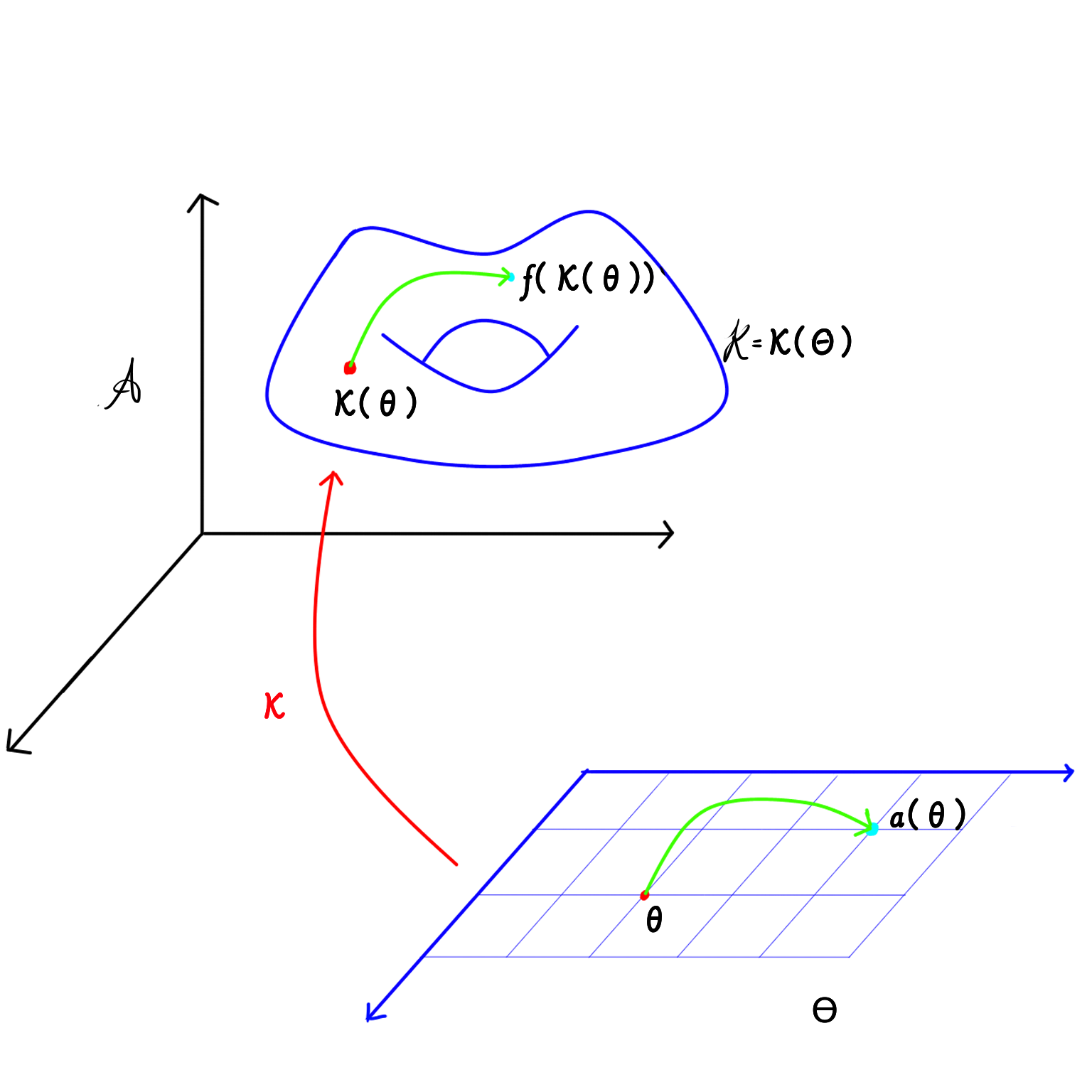}
  \caption{Parameterization of an invariant manifold (Figure taken from \cite{H16})}
  \label{parameterization}
\end{figure}

\subsection{The Invariance Equation of the Invariant Circle and
the Stable Foliation} \label{setup_invariance}

Given a smooth diffeomorphism $f : \mathbb{T} \times
\mathbb{R} \rightarrow \mathbb{T} \times \mathbb{R}$ that generates a discrete
dynamical system in $\mathbb{T} \times \mathbb{R}$, we assume that $f$ admits a
stable invariant circle. Our goal is to find the invariant circle and the
corresponding stable manifolds of points.

More precisely, following a similar approach as in
Section~\ref{setup_parameterization}, we are looking for an injective immersion
$W:
\mathbb{T} \times \mathbb{R} \rightarrow \mathbb{T} \times \mathbb{R}$ such that
it parameterizes the neighborhood of the invariant circle. Thus, we will consider the following invariance equation:
\begin{equation} \label{invariance_origin}
  f \circ W(\theta, s) - W(a(\theta), \lambda(\theta, s)) = 0,
\end{equation}  
where $a: \mathbb{T} \rightarrow \mathbb{T}$ describes the internal dynamics on the invariant circle, and
$\lambda: \mathbb{T} \times \mathbb{R} \rightarrow \mathbb{R}$ describes the
dynamics on the stable manifolds of points.

In the above equation~\eqref{invariance_origin}, $W(\theta, s), a(\theta),
\lambda(\theta, s)$ are the unknowns, and $f(\theta, s)$ is the only known
function.

It is important to emphasize that the unknowns for equation
\eqref{invariance_origin} are functions. Dealing with it in this paper will
require tools from functional analysis. 

Note that when the phase space is 
$\mathbb{T} \times \mathbb{R}$, there are two topologically different embeddings 
of circles. One is when the circle is non-contractible
in the phase space  and the other is 
when the circle is embedded in a contractible way. This can be seen 
as boundary conditions on the embedding $W$.  In the non-contractible 
case, the lift of the embedding satisfies $W(\theta + 1, 0) = W(\theta, 0) + (1,0)$
and in the contractible case, the  lift of the embedding satisfies
$W(\theta + 1, 0) = W(\theta, 0)$. 

It is reasonable to assume that $W(\theta, 0)$ is the parameterization of the
invariant circle, it follows that $\lambda(\theta, 0) = 0$. If one denotes $K(\theta)$ as $W(\theta, 0)$, the invariance equation
\eqref{invariance_origin} reduces to equation~\eqref{invariance_old}.
Moreover, if $\sup_\theta \left| \partial_{s} \lambda(\theta, 0) \right| < 1$,
 we have the
invariant circle is stable. Sharper sufficient conditions for 
stability will be derived later. 

In this paper, we will allow that the internal dynamics $a(\theta)$ is 
 phase-locked (i.e. it has an attractive periodic orbit). 
In such a case, it can happen (indeed, one expects that 
this is the most common case in applications) 
 that the invariant circle is only finitely 
differentiable even if the map $f$ is analytic or even polynomial. 
\subsection{Underdetermination of the Invariance
  Equation} \label{setup_undetermincy}

One nice property of the invariance equation~\eqref{invariance_origin} is that
it is highly underdeterminate, thus admits many solutions.
Hence, depending on the problems, we can impose extra properties 
that improve the computation.   In this section, we will 
review some of the sources of underdetermination that lead to 
improvements in the computation. 

Clearly, the changes of coordinates in the reference manifold
leads the same geometric objects (same circle, same stable leaves) 
but given different parameterizations. 
It can be shown that the only lack of local uniqueness of 
the reference manifold of \eqref{invariance_origin} is these changes of 
variables in the reference manifold. Any two solutions of \eqref{invariance_origin}
close enough are related by a change of variables in the reference manifold and hence 
describe the same geometric object.

{From} the numerical point of view, depending on the properties of 
the system, we can recalibrate the system of coordinates so that the 
computation is better. Clearly, if our goal is to find a solution, 
having several solutions available is a very good feature.

In the following, we review the different sources of 
underdetermination in \eqref{invariance_origin} so that we take advantage of 
them.

\medskip
Given $(W(\theta, s), a(\theta),
\lambda(\theta, s))$ satisfying \eqref{invariance_origin}, we have that
\begin{itemize}
\item Conjugacy on $\theta$:
  For any diffeomorphism $g: \mathbb{T} \rightarrow \mathbb{T}$, we have
  $$\widetilde{W}(\theta, s) = W(g(\theta), s), $$
  $$\widetilde{a}(\theta) = g^{-1} \circ a \circ g(\theta),$$
  $$\widetilde{\lambda}(\theta, s) = \lambda(g(\theta), s),$$
  is also a solution of \eqref{invariance_origin}.
\item Conjugacy on $s$:
  For any $\widehat{\lambda}: \mathbb{T} \times \mathbb{R} \rightarrow
  \mathbb{R}$, if there exists a differentiable function $h: \mathbb{T} \times \mathbb{R} \rightarrow
  \mathbb{R}$ such that
  \begin{equation} \label{undetermination}
    h(a(\theta), \widehat{\lambda}(\theta, s)) = \lambda(\theta, h(\theta, s)),
  \end{equation}
  we have
  $$ \widehat{W}(\theta, s) = W(\theta, h(\theta, s)), $$
  $$\widehat{a}(\theta) = a(\theta),$$
 $$\widehat{\lambda}(\theta, s)$$
 is also another solution of \eqref{invariance_origin}. 
\end{itemize}

According to Lemma~\ref{existence_h2} and its remarks, we can see that
such $h(\theta, s)$ as in \eqref{undetermination} exists in the case that $\widehat{\lambda}(\theta,
s)$ equals to the linear term of $\lambda(\theta, s)$ with respect to
$s$, provided that the norm of $\lambda$ is small enough. We postpone the
detailed discussion and the proof to Section~\ref{proof_unique}. 
As remarked there, Lemma~\ref{existence_h2} 
is a fibered version of Poincar\'e-Sternberg theorem on the linearization of 
contractions. 

Benefiting from the second underdetermination
 property and Lemma~\ref{existence_h2},
instead of considering  \eqref{invariance_origin} we can consider
\begin{equation} \label{invariance}
  f \circ W(\theta, s) - W(a(\theta), \lambda(\theta) s) = 0.
\end{equation}
Our goal now becomes solving for
$W(\theta, s), a(\theta)$ and $\lambda(\theta)$ from equation~\eqref{invariance}.

Note that, if $W,a, \lambda$ is a  solution of \eqref{invariance},
clearly $W$ is an invariant foliation with internal dynamics
given by $a,\lambda$.  One could, however, wonder if there
are other invariant foliations. The content of Lemma~\ref{existence_h2}
is to show that, if there was an invariant foliation, then, one
can obtain a solution of \eqref{invariance} by reparameterizing it.
Hence, finding a solution of \eqref{invariance} is  not only sufficient
for finding invariant foliations but also equivalent.

\begin{remark}
\label{rmk:constantlambda}
  If $a(\theta)$ conjugates to a Diophantine rotation $\theta + \omega$, one can
  show that 
  $\lambda(\theta)$ can be reduced to a constant.

  In fact, given a tuple $(W(\theta, s), \theta + \omega, \lambda(\theta))$
  satisfies Equation~\ref{invariance}, we can show that there exists a constant
  $\bar \lambda$ and $h(\theta, s) = r(\theta) s$ such that Identity
  \eqref{undetermination} holds.

  To prove this, we start with a function $\widehat{\lambda}(\theta)$, and the
  goal is to find $r(\theta)$ to reduce such $\widehat{\lambda}(\theta)$ to a
  constant.  

  From Identity \eqref{undetermination}, we have
  \begin{equation*}
    r(\theta + w) \lambda(\theta) = \widehat{\lambda}(\theta) r(\theta),
  \end{equation*}
  by taking the logrithm, we have
  \begin{equation} \label{log_coho}
    \log \widehat{\lambda}(\theta) = \log \lambda(\theta) + \log r(\theta) - \log r(\theta + \omega),
  \end{equation}

  Standard discussions of cohomological equations in KAM theory \cite{dlL01}
  show that $\widehat{\lambda}$ can be made into a constant if and only if
  \begin{equation}
    \label{coho_for_r}
    \log \lambda(\theta) + \log r(\theta) - \log r(\theta + \omega) = \int_{\mathbb{T}}\log \lambda(\theta) d \theta
  \end{equation}
  holds, in which case $\widehat{\lambda}(\theta) = \exp{\int_{\mathbb{T}}\log \lambda(\theta) d \theta} \triangleq \bar \lambda$.
\end{remark}

\subsection{Stable Manifolds of Points (Isochrons)} \label{setup_isochrones}

Notice that the invariance equation~\eqref{invariance_origin} contains not only
the dynamics of the invariant circle, but also the dynamics in
 a neighborhood
of the invariant circle. In particular, if equation~\eqref{invariance} is
satisfied, and if $\sup_{\theta \in \mathbb{T}}| \lambda(\theta) | < 1$, we have the set
$$I_{\theta} = \{W(\theta, s)\ | \ s \in \mathbb{R}\}.$$
consists of points whose orbits converge exponentially fast (with a high enough rate) 
 to the orbit of
$W(\theta, 0)$ since 
\begin{equation} \label{iterates} 
\begin{split} 
& f^{\circ j}(W(\theta, s)) = W(a^{\circ j}(\theta), \lambda^{[j]}(\theta)s),\\
&f^{\circ j}(W(\theta, 0)) = W(a^{\circ j}(\theta), 0)
\end{split} 
\end{equation} 
where 
\begin{equation}\label{lambdaiterated} 
\lambda^{[j]}(\theta) = \lambda(\theta)\lambda(a(\theta))\lambda(a^{\circ 2}(\theta)) \cdots
  \lambda(a^{\circ (j - 1)}(\theta))
\end{equation} 
 and $a^{j}(\theta)$ denotes $a(\theta)$ composing
  with itself $j$ times.
Note that 
\begin{equation} \label{cocyle} 
\lambda^{[j+k]}(\theta) = \lambda^{[j]}(a^{\circ k}(\theta) ) \lambda^{[k]}
\end{equation} 
Hence $\sup_\theta |\lambda^{[j+k]}(\theta) | \le 
\sup_\theta |\lambda^{[j]}(\theta)| 
\cdot 
\sup_\theta |\lambda^{[k]}(\theta)| 
$.

More specifically, when $\sup_\theta|\lambda(\theta)| < 1$, 
for all $\theta$, we have $f^{k}(I_{\theta}) \rightarrow
  a^{k}(\theta)$ exponentially fast as $n \rightarrow \infty$. 

Note that the isochrons are not invariant sets. Nevertheless, they behave 
well under the map. We have
\[
f(I_\theta) \subset I_{a(\theta)} 
\]
so that the foliation given by all the isochrons is invariant in the sense 
that if two points are in the same leaf, applying the map to both of them, 
we obtain another pair of points in the same leaf (different from the original one). 

\begin{remark}
\label{rmk:dynamicalaverage}
  Given $\lambda(\theta)$, we will refer to the quantity
  \begin{equation} \label{dynamicalaverage} 
    \lambda^* := \lim_{n \rightarrow
      \infty}\Big(\|\lambda^{[n]}\|_{C^0}\Big)^{\frac{1}{n}}
  \end{equation}
  as the \emph{dynamical average}.

  Since $\|\lambda^{[n + m]}\|_{C^0} \leq \|\lambda^{[n]}\|_{C^0}\|\lambda^{[n +
    m]}\|_{C^0}$, the limit in \eqref{dynamicalaverage} always exists.
\end{remark}

The implicit function theorem shows that the set of stable manifolds
 forms a  foliation 
in a neighborhood of the circle and we can use the equation 
\eqref{invariance_origin} to show that the set of isochrons is indeed 
a foliation globally. Note that applying the 
implicit function theorem requires that the circle is $C^1$.
When the circle is less regular, the implicit function theorem 
can only conclude that the leaves form a pre-foliation.
 The conclusion that the isochrons form 
a foliation is also  obtained using more dynamical arguments in 
\cite{Fenichel73,Fenichel77}. It suffices to realize that the relation
\[
y \approx \tilde y \Leftrightarrow  d\big(f^n(y), f^n(\tilde y)\big) \le
C_{y,\tilde y} \lambda^n \quad n > 0
\]
is an equivalence relation. In \cite{Fenichel73, Fenichel77}, it is required
that $\lambda < \|Da^{\circ k}\|$ for some $k > 0$ for the persistence of the circle
as a $C^1$ manifold. 

The phenomena that happen when this inequality is violated, 
have been studied in the literature. A discussion can be found in 
\cite{YaoL21}. 

\begin{comment} 
\begin{remark}
Note that all points in the isochron have the same asymptotic behavior
and that the convergence to this asymptotic behavior is reached exponentially
with a rate $\lambda^*$.  In the theory of normal hyperbolicity, it is standard
that the stable manifold of a point $x$ is the set of points whose asymptotic 
behavior gets closer to that of  $x$ faster than a certain exponential 
rate. It is not enough to require that they just get close. 
For example, in our case, if the dynamics in the invariant circle 
has an attractive periodic point, then, the points whose orbits 
are asymptotic to the periodic orbit is an open set. On the other hand, those
that converge fast enough is  the strong stable manifold of the periodic 
orbit.

One very interesting regime is when the two eigenvalues of 
a periodic point are the same. In such a case, one expects that the circle
stops existing as a $C^1$ manifold \cite{GoldenY88, Mane}, but 
it may persist as a topological circle or as a continuum 
\cite{JarnikK69, CapinskiK20}.   The phenomena that happen near
the breakdown of normal hyperbolicity is still a challenging area. Some
discussions and numerical explorations can be found in \cite{YaoL21}. 
\end{remark}
\end{comment} 

\section{The Algorithm} \label{algorithm}

In this section, we discuss our algorithm for solving the invariance
equation~\eqref{invariance}. Unfortunately, \eqref{invariance} is hard
to solve using the Newton method. 
Instead of the standard Newton
method, we use a modification obtained by  omitting terms
that are heuristically quadratically small. Omitting these terms makes
the equation much easier to solve but, heuristically, does not change
the quadratic convergence.  These heuristic arguments are rigorously
justified later in Section~\ref{proof_proof}.

In Section~\ref{algorithm_derivation} and \ref{solution_quasi_newton}, we present the details of one step of the
quasi-Newton method.

 Given an approximate solution, we look for the
corrections that  so that the new error is quadratic in the original error. 
The method takes advantage of several identities. 

As it turns out, the main ingredient in the method 
is solving \emph{cohomological equations}. The cohomological equations are solved 
in 
Section~\ref{sol_coho_eq}. In Section~\ref{algorithm_algorithm}, we briefly discuss the 
step-by-step algorithm.
In Section~\ref{result} we will state a result on the convergence of  
the algorithm. We will show that the steps can be
repeated infinitely often and indeed converges.

The algorithm formulated in this section has been implemented in \cite{YaoL21}
and run in examples.  We refer to \cite{YaoL21} for details on implementation 
(how to discretize functions, number of variables used)
 As often happens, the algorithm is found to work with even in
regions beyond the requirement of regularity  of the rigorous proof and some new 
phenomena requiring mathematical explanation have been identified. 

\subsection{The quasi-Newton Method} \label{algorithm_derivation}
In this subsection, we formulate one step of the quasi-Newton method to solve
equation~\eqref{invariance}.

 Assume that we have an approximate parameterization of
the neighborhood of the invariant circle $W(\theta, s)$, an approximate
internal dynamics $a(\theta)$ and an approximate dynamics on
the isochrons $\lambda(\theta)s$ such that
\begin{equation} \label{error}
  e(\theta, s) = f \circ W(\theta, s) - W(a(\theta), \lambda(\theta)s),
\end{equation}
where $e(\theta, s)$ is the error.

The goal of one step of the quasi-Newton method is to compute  the corrections
$\Delta_W(\theta, s)$, $\Delta_a(\theta)$ and $\Delta_{\lambda}(\theta)$ such that
\begin{equation} \label{newton0}
  f(W + \Delta_W)(\theta, s) - (W + \Delta_W)((a + \Delta_a)(\theta), (\lambda + \Delta_{\lambda})(\theta)s) = 0
\end{equation}
up to an error which is quadratically smaller than the initial error $e$.

For
the moment, we work heuristically and ignore regularities. All these issues will
be settled later in Lemma~\ref{lemma_condition4}.

Using Taylor expansion and omitting higher order terms, Equation~\eqref{newton0}
becomes
\begin{align} \label{newton}
  0 =& f(W(\theta, s)) + Df(W)(\theta, s) \Delta_W(\theta, s) - W(a(\theta), \lambda(\theta)s) \nonumber \\
  & - DW(a(\theta), \lambda(\theta)s)\begin{pmatrix}
      \Delta_a(\theta) \\
      \Delta_{\lambda}(\theta)s
    \end{pmatrix} 
     - \Delta_W(a(\theta), \lambda(\theta)s) + \text{higher order terms},
\end{align}
where the term $D [\Delta_W(a(\theta), \lambda(\theta)s)] \begin{pmatrix}
  \Delta_a(\theta) \\
  \Delta_{\lambda}(\theta)s
\end{pmatrix}$ is ignored for now because it is ``heuristically'' quadratically
small. We will make a rigorous argument later in Lemma~\ref{lemma_condition4}.

Now we have that Equation~\eqref{newton} has become
\begin{align} \label{newton2}
  Df(W(\theta, s)) \Delta_W(\theta, s)& - DW(a(\theta), \lambda(\theta) s) \begin{pmatrix}
    \Delta_a(\theta) \\
    \Delta_{\lambda}(\theta) s
  \end{pmatrix} \nonumber \\
  - \Delta_W(a(\theta)&, \lambda(\theta) s) = -e(\theta, s).
\end{align}

\begin{remark}
  Notice that one should treat equation~\eqref{newton2} as an equation for 
  $\Delta_W(\theta, s), \Delta_a(\theta)$ and $\Delta_{\lambda}(\theta)$, with
  $f(\theta,
s)$ given by the problem, and $W(\theta, s), a(\theta)$ and $\lambda(\theta)$
given by the initial approximation as well as the RHS $e$. 
\end{remark}
To simplify the above equation~\eqref{newton2}, we will express $\Delta_W(\theta, s)$ in the frame $DW(\theta, s)$ as follows:
\begin{equation} \label{frame}
 \Delta_W(\theta, s) = DW(\theta, s) \Gamma(\theta, s).
\end{equation}

\begin{remark}
  Notice that if $DW(\theta, s)$ is invertible, solving for $\Delta_W(\theta, s)$
  is equivalent to solving for $\Gamma(\theta, s)$. One can see that if the
  initial guess of $W(\theta, s)$ is close enough to the
  true solution and $DW(\theta, s)$ is invertible initially, $DW(\theta, s)$ 
  remains to be invertible for each step of the iteration.
\end{remark}

By taking the derivative of equation~\eqref{error}, we have that
\begin{equation} \label{error2}
  De(\theta, s) = Df(W(\theta, s))DW(\theta, s) - DW(a(\theta), \lambda(\theta) s) \begin{pmatrix} Da(\theta) & 0 \\ D \lambda(\theta) s & \lambda(\theta) 
  \end{pmatrix}.
\end{equation}
Then, by substituting \eqref{frame} and \eqref{error2} in the quasi-Newton
equation~\eqref{newton2}, we obtain
\begin{align} \label{cohom}
  \begin{pmatrix} Da(\theta) & 0 \\ D \lambda(\theta) s & \lambda(\theta) 
  \end{pmatrix} \Gamma(\theta, s) -& \begin{pmatrix}
    \Delta_a(\theta) \\
    \Delta_{\lambda}(\theta) s
  \end{pmatrix} - \Gamma(a(\theta), \lambda(\theta)s) \nonumber \\ 
                             &= - (DW(a(\theta), \lambda(\theta) s)) ^ {-1} e(\theta, s) \nonumber \nonumber \\
                             &\triangleq \widetilde{e}(\theta, s),
\end{align}
where the term $De(\theta, s)\Gamma(\theta, s)$ is also omitted for the same
reason as in equation~\eqref{newton}, and the rigorous justification is again
left to Lemma~\ref{lemma_condition4}.

If we express  equation~\eqref{cohom} in components, we obtain the following two
equations for the unknowns $\Gamma_1(\theta, s)$, $\Gamma_2(\theta, s)$, $\Delta_a(\theta)$ and
$\Delta_{\lambda}(\theta)$.
\begin{equation} \label{eq1}
  Da(\theta) \Gamma_1(\theta, s) - \Delta_a(\theta) - \Gamma_1(a(\theta), \lambda(\theta) s) = \widetilde{e}_1(\theta, s),
\end{equation}
\begin{align} \label{eq2}
  \lambda(\theta) \Gamma_2(\theta, s) - \Delta_{\lambda}(\theta) s - \Gamma_2(a(\theta), \lambda(\theta) s) &= \widetilde{e}_2(\theta, s) - D \lambda(\theta) s \Gamma_1(\theta, s)\\ \nonumber
                                                                                                          &\triangleq M(\theta, s).
\end{align}
where $\Gamma_1(\theta, s)$ and $\Gamma_2(\theta, s)$ are the components of
$\Gamma(\theta, s)$.

\subsection{Solving $\Gamma_{1, 2}, \Delta_ {\lambda}, \Delta_a$ from Equation \eqref{eq1}, \eqref{eq2}} \label{solution_quasi_newton}

In this subsection, we present the details of solving equation~\eqref{eq1} and
\eqref{eq2}.
To study those two equations, we will
discretize any function from $\mathbb{T} \times \mathbb{R}:$ $g(\theta, s)$ as Taylor series with respect to $s$:
$$ g(\theta, s) = \sum_{j = 0}^{\infty}g^{(j)}(\theta) s^j,$$
with the assumption that $g(\theta, s)$ is $C^r$ in $\theta$ and real analytic in $s$,
where $g^{(j)}(\theta) \in C^{r}$ is the coefficient for $s^j, j \geq 0, j \in
\mathbb{N}$. In the context of Section~\ref{result}, $g(\theta, s) \in
\mathcal{X}^{r, \delta}$ for some $\delta > 0$. 

By matching coefficients of $s^j$ on both sides, 
 we can rewrite equation~\eqref{eq1} and \eqref{eq2} 
as a hierarchy of equations provided that $Da(\theta)$ and $\lambda(\theta)$ are
not equal to 0 for any $\theta \in \mathbb{T}$. 

\begin{itemize}
\item For equation~\eqref{eq1}:
  \begin{itemize}
  \item[$\circ$] For the coefficients of $s^0$:
    \begin{equation} \label{eq1_order0}
      Da(\theta) \Gamma_1^{(0)}(\theta) - \Gamma_1^{(0)}(a(\theta)) - \Delta_a(\theta) = \widetilde{e}_1^{(0)}(\theta),
    \end{equation}
  \item[$\circ$] For the coefficients of $s^j$, $j \geq 1, j \in \mathbb{N}$:
    \begin{equation} \label{eq1_higher}
      \Gamma_1^{(j)}(\theta) = \frac{\lambda^j(\theta)}{Da(\theta)} \Gamma_1^{(j)}(a(\theta)) + \frac{\widetilde{e}_1^{(j)}(\theta)}{Da(\theta)}.
    \end{equation}
  \end{itemize}
\item For equation~\eqref{eq2}:
  \begin{itemize}
  \item[$\circ$] For the coefficients of $s^0$:
    \begin{equation*} 
      \lambda(\theta) \Gamma_2^{(0)}(\theta) - \Gamma_2^{(0)}(a(\theta)) = M^{(0)}(\theta),
    \end{equation*}
    which, by composing $a^{-1}(\theta)$, can be rewritten as
    \begin{equation} \label{eq2_order0}
      \Gamma_2^{(0)}(\theta) = \lambda(a^{-1}(\theta)) \Gamma_2^{(0)}(a^{-1}(\theta)) - M^{(0)}(a^{-1}(\theta)),
    \end{equation}
  \item[$\circ$] For the coefficients of $s^1$:
    \begin{equation} \label{eq2_order1}
      \lambda(\theta) \Gamma_2^{(1)}(\theta) - \Gamma_2^{(1)}(a(\theta)) \lambda(\theta) - \Delta_{\lambda}(\theta) = M^{(1)}(\theta),
    \end{equation}
  \item[$\circ$] For the coefficients of $s^j$, $j \geq 2, j \in \mathbb{N}$:
    \begin{equation} \label{eq2_higher}
      \Gamma_2^{(j)}(\theta) = \lambda ^ {j-1}(\theta) \Gamma_2^{(j)}(a(\theta)) + \frac{M^{(j)}(\theta)}{\lambda(\theta)}.
    \end{equation} 
  \end{itemize}
\end{itemize}

The hierarchy of equations above is well known from perturbation expansions. 
Algorithms for efficient computation of the coefficients
can be found in \cite{ZdlL18}.

Again, our goal is to solve the above equations for $\Delta_a(\theta), \Delta_{\lambda}(\theta)$, $\Gamma_1^{(j)}(\theta), \Gamma_2^{(j)}(\theta)$ for $j \geq
0$.

First, notice that Equation~\eqref{eq1_order0} and \eqref{eq2_order1} are
underdetermined equations, hence the solution is not unique. An interesting
question we have not yet pursued 
is how to choose the solution of \eqref{eq1_order0} and 
\eqref{eq2_order1} that 
improves the numerical stability of 
the  algorithm. Intuitively, it seems desirable 
to design the algorithms so that the $a,\lambda$ are 
\emph{``simple''}, but we have not succeeded in 
making this precise when the inner dynamics is phase-locked. (When $a(\theta)$
is conjugate to a Diophantine rotation, one can 
use the underdeterminacy  to set  $a(\theta)$ to be a Diophantine rotation
and $\lambda(\theta)$ to be a constant, see Remark~\ref{rmk:constantlambda}.)

In this paper, we choose the most obvious
solution: For equation~\eqref{eq1_order0}, we let $\Gamma_1^{(0)}(\theta) =
0$ and thus $\Delta_a(\theta) = - \widetilde{e}_1^{(0)}(\theta)$; for equation
\eqref{eq2_order1}, we let $\Gamma_2^{(1)}(\theta) = 0$ and thus
$\Delta_\lambda(\theta) = - M^{(1)}(\theta)$. This choice of solution guarantees
the norm is controled by the error, it is referred as the graph style in
\cite{H16}.

Notice that Equation~\eqref{eq1_higher}, \eqref{eq2_order0} and
\eqref{eq2_higher} have been reorganized so that 
are written as cohomological equation of the form:
\begin{equation} \label{coho}
  \phi(\theta) = l(\theta)\phi(a(\theta)) + \eta(\theta).
\end{equation}
where $\phi(\theta)$ is the unknown and $a(\theta), l(\theta)$ and
$\eta(\theta)$ are given.

\subsection{Solving $\phi$ from the Cohomological Equation \eqref {coho}} \label{sol_coho_eq}

In this subsection, we solve Equation~\eqref{coho} by contraction.

By inductively replacing $\phi(\theta)$ on the right hand side of \eqref{coho}
by the equation itself, we have
\begin{align} \label{coho_steps}
  \phi(\theta) =\  & \eta(\theta) + l(\theta)\eta(a(\theta)) + l(\theta)l(a(\theta))\eta(a^{\circ 2}(\theta)) \nonumber \\ 
                 & + \ldots + l(\theta)l(a(\theta))l(a^{\circ 2}(\theta))\cdots l(a^{\circ (n-1)}(\theta))\eta(a^{\circ n}(\theta)) \nonumber\\
                 & + l(\theta)l(a(\theta))l(a^{\circ 2}(\theta))\cdots l(a^{\circ n}(\theta))\phi(a^{\circ (n+1)}(\theta)) \nonumber\\
  = & \sum_{j = 0}^{n}l^{[j]}(\theta)\eta(a^{\circ j}(\theta)) + l^{[n + 1]}(\theta)\phi(a^{\circ (n+1)}(\theta)),
\end{align}
where as in equation~\eqref{lambdaiterated}, $l^{[j]}(\theta) =
l(\theta)l(a(\theta))l(a^{\circ 2}(\theta)) \cdots
l(a^{\circ (j-1)}(\theta))$, and $l^{[0]}(\theta) = 1$.

Note that, if $\| l^{[j]} \|_{C^0} < 1$, 
and $\phi$ is bounded, the  last term in \eqref{coho_steps} 
tends to zero uniformly. Hence, the only possible 
$C^0$ solution of \eqref{coho}, is 
\begin{equation} \label{coho_solution}
  \phi(\theta) = \sum_{j = 0}^{\infty}l^{[j]}(\theta)\eta(a^{\circ j}(\theta)).
\end{equation}

As proved in Lemma~\ref{lemma_coho} in Section~\ref{proof_coho}, 
 given $r$
such that 
\begin{equation} \label{finite_reg}
\left\|l \right\|_{C^0} \left\|Da \right\|_{C^0}^r < 1,
\end{equation}
we will show  $  \|l^{[j]}(\cdot )\eta(a^{\circ j}(\cdot))\|_{C^r}  \le  C \alpha^j $
for some $C> 0, \alpha <1$ so that 
$\sum_{j = 0}^{n}l^{[j]}(\theta)\eta(a^{\circ j}(\theta))$
converges absolutely
in $C^r$. Hence, \eqref{coho} has a $C^r$ solution. 

The conditions \eqref{finite_reg} can be slightly improved
to $\|l^{[k]}\|_{C^0}  \|D(a^{\circ k}) \|_{C^0}^r < 1$
(or even to 
$\| l^{[k]} D(a^{\circ k}) \|_{C^0} < 1)$.
Nevertheless, there are explicit  examples discussed 
in the remark after Lemma~\ref{lemma_coho}, cohomological equation
\eqref{coho} can only be solved 
for a finite range of $r$.  These  examples are rather persistent 
and they happen in open $C^1$ neighborhoods of $a$. So the phenomenon 
of the quasi-Newton method being defined only on a finite range 
of regularities has to be considered by the Nash-Moser method 
we develop in Appendix~\ref{appendix}.

\begin{remark} \label{faster_coho}
  As we will see in \cite{YaoL21}, the right hand side of equation~\eqref{coho_solution} can be implemented
  very efficiently so that the summation of $M$ terms requires only
  $\log M$ steps.
\end{remark}

\subsection{The Algorithm for One Iteration of the quasi-Newton
  Method} \label{algorithm_algorithm}

By the discussion in Section~\ref{algorithm_derivation}, we now summarize
the steps for one iteration of the quasi-Newton method derived above.
Estimations of the norms will be discussed in Section~\ref{proof}.
Given an approximate solution $W(\theta, s)$, $a(\theta)$ and $\lambda(\theta)$,
where $W(\theta, s)$ is truncated up to the $L$-th order in the power series
expansion (from the analysts' point of view, $L = \infty$), the correction
$\Delta_W(\theta, s)$(with maximal order $L$), $\Delta_a(\theta)$ and
$\Delta_{\lambda}(\theta)$ are calculated by the algorithm as follows:

\begin{algorithm}[htb]
  \caption{One iteration of the algorithm}
  \label{algorithm_n}
  \begin{algorithmic}[1]
    \Require{Initial $W(\theta, s), a(\theta)$ and $\lambda(\theta)$}
    \Ensure{Solution $W(\theta, s), a(\theta)$ and $\lambda(\theta)$ to the
    invariance equation~\eqref{invariance}}
  \Statex
  \Let{$\sum_{j = 0}^Le^{(j)}(\theta)s^j = e(\theta, s)$}{$f \circ
    W(\theta, s) - W(a(\theta), \lambda(\theta)s)$}, 
  \State Compute $DW(\theta, s)$ and $DW \circ (a(\theta), \lambda(\theta)s)$,
  \Let{$\sum_{j = 0}^L\widetilde{e}^{(j)}(\theta)s^j = \widetilde{e}(\theta, s)$}{$
    (DW(a(\theta), \lambda(\theta)s))^{-1} e(\theta, s)$}, 
  \Let{$\Delta_a(\theta)$}{$- \widetilde{e}_1^{(0)}(\theta)$},
  \Let{$\Gamma_1^{(0)}(\theta)$}{0},
  \State Solve $\Gamma_1^{(j)}(\theta)$ from equation~\eqref{eq1_higher} for $1 \leq
  j \leq L$,
  \Let{$\sum_{j = 0}^LM^{(j)}(\theta)s^j = M(\theta, s)$}{$\widetilde{e}_2(\theta,
    s) - D \lambda(\theta) s \Gamma_1(\theta, s)$},
  \Let{$\Delta_{\lambda}(\theta)$}{$- M^{(1)}(\theta)$},
  \Let{$\Gamma_2^{(1)}(\theta)$}{0},
  \State Solve $\Gamma_2^{(0)}(\theta)$ from equation~\eqref{eq2_order0},
  \State Solve $\Gamma_2^{(j)}(\theta)$ from equation~\eqref{eq2_higher} for $2
  \leq j \leq L$, 
  \Let{$\sum_{j = 0}^L\Delta_W^{(j)}(\theta)s^j = \Delta_W(\theta,
    s)$}{$DW(\theta, s) \Gamma(\theta, s)$},
  \Let{$W(\theta, s)$}{$W(\theta, s) + \Delta_W(\theta, s)$},
  \Let{$a(\theta)$}{$a(\theta) + \Delta_a(\theta)$},
  \Let{$\lambda(\theta)$}{$\lambda(\theta) + \Delta_{\lambda}(\theta)$},
  \State Return updated $W(\theta, s), a(\theta)$ and $\lambda(\theta)$.

\end{algorithmic}
\end{algorithm}

\begin{remark}
  Step 6, 10, and 11 are solved based on Equation~\eqref{coho_solution}. As
  stated in Remark~\ref{faster_coho}, in
  \cite{YaoL21}, we present a faster algorithm regarding to this.
\end{remark}

In this paper, we will just present  the analysis of the algorithm 
above and show its convergence under the hypothesis that the 
starting step is close to being a solution.

In 
 \cite{YaoL21} we will discuss the implementation details (discretization, 
programming considerations, and more importantly, diagnostics
of reliability). 

We point out 
that the algorithm is very efficient (it only manipulates
functions). At no time one needs to store (much less invert) a 
matrix with the discretization of the error.  Hence the storage 
requirements will be proportional to space taken by the discretization 
of functions (not the square!) and that the operation count will 
be roughly proportional to the number of variables used to 
discretize a function (there may be logarithmic corrections if 
one uses Fourier methods; see \cite{YaoL21}.)

The algorithm  is also easy to implement in a preliminary -- but 
workable -- form. 
Note that the algorithm has only 16 steps, each of which
can be efficiently implemented in a few lines in a high-level language (or a 
good scientific library).  The most complicated 
step is solving the cohomology equation, but we have iterative formulas for the
solution, along with the quadratic convergence contraction algorithm (more
in \cite{YaoL21}).

Of course, developing a high-quality practical algorithm requires 
developing criteria  that ensure correctness and monitor the 
accuracy.   In that respect, having an 
a-posteriori theorem is an invaluable help. 

The proof of the convergence involves alternating the algorithm with smoothing 
steps. In numerical applications, we have found it convenient to 
include a low pass filter  that smooths the numerical calculations.
This seems to provide enough smoothing.
 See a more detailed discussion in \cite{YaoL21}. 

\section{Scale of Banach Spaces} \label{space}

In this section, we set up the scale of Banach spaces that is needed in
Section~\ref{result} and Section~\ref{proof}. Since for the problem we
are dealing with, functions with domain $\mathbb{T}$ admits only finite
regularity (Lemma~\ref{lemma_coho}), we first recall the $C^r$ space
($r \in \mathbb{N}^+ + (0, 1)$) \cite{LlaveO99} along with some inequalities, and based
on that, we proceed to the
$\mathcal{X}^{r,\delta}$ space for functions in $\mathbb{T} \times \mathbb{R}$,
and finally the $\mathscr{X}^{r, \delta}$ space that will be used in
Section~\ref{result}. The existence of the smoothing operator in $C^r$
guarantees the existence of such smoothing operator in $\mathcal{X}^{r, \delta}$
and $\mathscr{X}^{r, \delta}$ spaces.

\subsection{Setup of the Scale of Spaces} \label{result_setup}
In this section, we describe the spaces that we will use. Roughly speaking, the
spaces are
for functions with domain $(\theta, s) \in \mathbb{T} \times \mathbb{R}$. The functions we 
are interested in will be finitely differentiable in the $\theta$ variable
and analytic in the $s$ variable. The spaces will therefore have 
two indices. One index measuring the -- finite order -- differentiability 
in $\theta$ and another index measuring the size of the analyticity domain in 
$s$. 

The most delicate analysis (smoothing, approximation) will happen in
the finite differentiable direction. In our case, this will be the 
circle. The analysis of finite differentiable spaces
we present is rather standard. As it is well known
in approximation theory, defining a family of regularities
indexed by a real parameter becomes subtle for integer values
of the parameter.  A good reference 
is \cite{Zehnder75, Stein70, LlaveO99}. The properties of 
spaces of functions with mixed regularity used in 
this paper are built on those.

We first recall the $C^r$ spaces.

\subsubsection{Space for Functions in $\mathbb{T}$}
By standard definitions of the H\"older spaces (as in \cite{LlaveO99, dlLW11}), the spaces we will be concerned with
for functions defined on $\mathbb{T}$ are:

\begin{definition}
\label{Crspacese} 
Let $X$ be a Banach space. 

For $r \in \mathbb{N}$, we define: 
\[
C^r(\torus, X)= \{ f: \torus \rightarrow X, \text{$r$ times continuously 
differentiable.} \}
\]
We endow $C^r$ with the supremum norm of all the derivatives of order 
up to $r$, which makes it into a Banach space. 

For $r = n + \alpha \notin \mathbb{N}$ with $n = \floor{r} \in \mathbb{N}, \alpha \in (0,1)$
we define $C^r = C^{n+\alpha}$:
\[
C^r(\torus, X)= \{ f: \torus \rightarrow X, \text{$r$ times continuously differentiable},
D^{\alpha}f \text{ is } \alpha \text{-H\"older.} 
 \}
\]
We endow $C^r(\torus,X)$ with the norm 
\[
\| f \|_{C^{n + \alpha}} =  \max( \|f\|_{C^n}, H_\alpha( D^n f) ),
\]
where for a function $\phi: \torus\rightarrow X$, we set 
\[
H_\alpha(\phi)  =  \sup_{x \ne y} \frac{| \phi(x) - \phi(y) |}{d(x,y)^\alpha}.
\]
\end{definition}

\begin{remark}
  In this paper (excluding Appendix~\ref{appendix}), we always denote $r \geq 0$ for
  the regularity, and we always have $n = \floor{r}$ and $\alpha = r - n$.
\end{remark}

\begin{remark}
  The case $\alpha = 1$
  agrees with the Lipschitz constant and is very natural.  We have excluded it
  to avoid complicating the notation since $C^{r+1}$ would be ambiguous when $r$
  is an integer.  
\end{remark}

\begin{remark}
  $H_\alpha$ is a seminorm and $H_\alpha(\phi) = 0$ if and only if $\phi$ is a
  constant.
\end{remark}

\begin{remark}
  The $C^r$ scale of spaces is very natural and easy to work with since 
the definitions of the norms are very explicit. As it is well known, 
the $C^r$ scale of spaces has anomalies when $r$ is an integer (the 
properties of approximation and smoothing are not as expected, etc). 
So, it is common in analysis to use the other scales of
spaces. (called $\Lambda_r$ in \cite{Stein70} or ${\hat C}^r$ in 
\cite{Moser66a,Zehnder75} ). 

In this paper, we will not use the $\Lambda_r$ spaces (the 
composition operator plays a role in our study and there does 
not seem to be in the literature a systematic study of composition in the
$\Lambda_\alpha$ scales)
but our results 
will include some caveats that the spaces in the hypothesis or
in the conclusions are not integers.  Sometimes, this just amounts to
making some inequalities in the range strict. 
\end{remark}

\begin{remark}  
When $r \in \nat$, the  $C^r$ spaces can be defined taking values in any 
manifold Riemannian (or even Finsler) manifold. When $r>1$ and $r \not\in \nat$, 
the definition, in general, is complicated since to define $H_\alpha$, one
needs to compare the values of derivatives at two different points. This 
requires making explicit some cumbersome choices.   In this paper, 
however, we will only need to deal with $C^r(\torus, \torus)$ or $C^r(\torus,
\mathbb{R})$.
For $\torus$, there is a natural identification of all the tangent
spaces of different points, so that there is no problem in defining $C^r$ spaces taking values 
on the torus. 
\end{remark}

\subsubsection{Space for Functions in $\mathbb{T} \times \mathbb{R}$}
Given $\delta < 1$, we define the space $\mathcal{X}^{r, \delta}$ as follows:
\begin{definition}
  \label{Xr_space}
  For a function $u(\theta, s)$ with domain $\mathbb{T} \times [- \delta, \delta]$, we
  say $u \in \mathcal{X}^{r, \delta}$ if $u(\theta, s) = \sum_{j =
    0}^{\infty}u^{(j)}(\theta)s^j$ with $u^{(j)}(\theta) \in C^r$ and $\sum_{j =
    0}^{\infty}\left\|u^{(j)}\right\|_{C^r}\delta^{j} < \infty$.
  In other words,
  \begin{align*}
    \mathcal{X}^{r, \delta} = \Big\{ u(\theta, s) = &\sum_{j = 0}^{\infty}u^{(j)}(\theta)s^j \mid u^{(j)}(\theta) \in C^r, \text{ and } \sum_{j = 0}^{\infty}\left\|u^{(j)}\right\|_{C^r}\delta^{j} < \infty \Big\}
  \end{align*}
  with norm
  \begin{equation*}
    \left\|u\right\|_{\mathcal{X}^{r, \delta}} = \sum_{j = 0}^{\infty}\left\|u^{(j)}\right\|_{C^r}\delta^{j}.
  \end{equation*}
\end{definition}

\begin{remark}\label{several_spaces} 
It is useful to think of $\mathcal{X}^{r, \delta}$ as a space of 
$C^r$ functions from the circle to a space of analytic functions on the unit
disk.

This corresponds well to the idea of local foliations. We can think of a function 
that to each of the base points associates a segment of
the analytic leaf. 
\end{remark}

\begin{remark}
  Note that the space $\mathcal{X}^{r,\delta}$
  consists of functions that in the variable $s$ have a domain
  of analyticity which is a disk. 

  This is, of course, enough when we are considering local
  foliations, but if we study global foliations, it can
  well happen that the true domain of analyticity of the leaves is
  not a disk.

  From the numerical point of view,  it is
  natural and efficient to represent functions in a disk using power series
  and indeed the definition of the norm in $\mathcal{X}^{r,\delta}$ is
  done to reflect that. On the other hand, one should keep in mind
  that in the global study of foliations, finding solutions of
  \eqref{invariance} in $\mathcal{X}^{r,\delta}$ only gives us segments of
  the leaves. Roughly, we are studying the solution in a circle,
  which extends to the singularity closest to the origin.  If
  this singularity happens away from the real line, the parameterization
  may be analytic for real values outside the circle of convergence.

  Numerically, this corresponds to the step of \emph{``globalization''}.
  Once we have obtained a good representation of the function in a neighborhood
  of the origin using power series, we can use \eqref{invariance} to
  obtain the parameterization in a larger domain. Some interesting examples of
  foliation with global computations appear in \cite{BroerST98}. 
\end{remark} 

For notational simplicity, we denote $\mathcal{X}^{r, \delta}$ as
$\mathcal{X}^{r}$ when the $\delta$ is understood.
We will also not distinguish $\left\| \cdot \right\|_{\mathcal{X}^{r, \delta}}$
and $\left\| \cdot \right\|_{C^r}$ if the space of the analytic function is understood.
Since for $f: \torus \rightarrow \torus$, $f \in C^r$ implies $f \in \mathcal{X}^{r,
  \delta}$ and we have $\left\| f\right\|_{C^r} = \left\| f\right\|_{\mathcal{X}^{r,
    \delta}}$.

\subsection{Basic Properties of $C^r$ and $\mathcal{X}^{r, \delta}$ Spaces} \label{proof_basic}

\subsubsection{Inequalities for Basic Operations}
In this subsection, we present some basic properties and inequalities in
the $C^r$ space. 
\begin{lemma}[Inequalities in $C^r$ Space]
  \label{inequal_Halpha}
  For $\phi, \psi, a\in C^r$, where $r \geq 1$, and $a: \mathbb{T} \rightarrow
  \mathbb{T}$ is a differeomorphism, we have the following inequalities
  \cite{LlaveO99}:
  \begin{enumerate}
  \item $H_\alpha ( \phi \circ a) \leq H_\alpha(\phi)\| Da \|_{C^0}^\alpha$,
  \item $H_\alpha(\phi \cdot \psi)  \le \|\phi\|_{C^0} H_\alpha(\psi) +
    H_\alpha(\phi)  \| \psi\|_{C^0}$,
  \item $\left\| \phi \cdot \psi \right\|_{C^r} \leq 2^{2n+1} \left\| \phi
    \right\|_{C^r} \left\| \psi \right\|_{C^r},$
  \item $\left\| \phi \circ \psi \right\|_{C^r} \leq M_r \left\| \phi \right\|_{C^r} (1 +
    \left\| \psi \right\|_{C^r}^r) \leq 2 M_r \left\| \phi \right\|_{C^r} \left\| \psi
    \right\|_{C^r}^{r},$ where $M_r \geq 1.$
  \end{enumerate}
\end{lemma}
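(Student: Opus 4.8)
The plan is to prove the four inequalities in order, reducing each to the preceding ones; parts (1)--(2) are elementary, (3) uses (2), and (4) uses (1)--(3) together with the Fa\`a di Bruno formula. For (1), given $x\neq y$ I would write $|\phi(a(x))-\phi(a(y))|\le H_\alpha(\phi)\,d(a(x),a(y))^\alpha$ and bound $d(a(x),a(y))\le\|Da\|_{C^0}\,d(x,y)$ by the mean value inequality (the shorter arc joining $x$ to $y$ is mapped by $a$ to a path of length at most $\|Da\|_{C^0}d(x,y)$); dividing by $d(x,y)^\alpha$ and taking the supremum gives the claim. For (2) I would use the splitting $\phi(x)\psi(x)-\phi(y)\psi(y)=\phi(x)\big(\psi(x)-\psi(y)\big)+\big(\phi(x)-\phi(y)\big)\psi(y)$, estimate the two summands by $\|\phi\|_{C^0}H_\alpha(\psi)\,d(x,y)^\alpha$ and $H_\alpha(\phi)\|\psi\|_{C^0}\,d(x,y)^\alpha$ respectively, and again divide and take the supremum.

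For (3) the tool is the Leibniz rule $D^k(\phi\psi)=\sum_{j=0}^k\binom{k}{j}D^j\phi\,D^{k-j}\psi$. For $k\le n$ this immediately gives $\|D^k(\phi\psi)\|_{C^0}\le 2^k\|\phi\|_{C^r}\|\psi\|_{C^r}$. For the top-order seminorm $H_\alpha(D^n(\phi\psi))$ I would apply (2) term by term; here I would use the elementary fact that on $\torus$, which has diameter at most $\tfrac12$, one has $H_\alpha(g)\le\|Dg\|_{C^0}$ for every $C^1$ function $g$ (since $d(x,y)^{1-\alpha}\le 1$), so that each factor $D^j\phi$, $D^{k-j}\psi$ with index $\le n$ has \emph{both} its $C^0$ norm and its $\alpha$-Hölder seminorm bounded by $\|\phi\|_{C^r}$, resp.\ $\|\psi\|_{C^r}$. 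Summing the binomial coefficients yields a bound of the form $2^{O(n)}\|\phi\|_{C^r}\|\psi\|_{C^r}$; the stated $2^{2n+1}$ is a convenient (non-optimal) over-estimate.

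For (4) the natural tool is the Fa\`a di Bruno formula, which writes $D^k(\phi\circ\psi)$ for $k\le n$ as a finite combinatorial sum of terms of the shape $\big((D^m\phi)\circ\psi\big)\prod_i D^{k_i}\psi$ with $\sum_i k_i=k$ and $m$ equal to the number of factors. Each such term I would estimate by combining (1) to commute $H_\alpha$ past the composition (at the cost of a factor $\|D\psi\|_{C^0}^\alpha$), (2)--(3) for the products of derivatives of $\psi$, and $\|(D^m\phi)\circ\psi\|_{C^0}\le\|\phi\|_{C^r}$. The dominant term $\big((D^n\phi)\circ\psi\big)(D\psi)^n$ produces the highest power $\|\psi\|_{C^r}^{\,n+\alpha}=\|\psi\|_{C^r}^{\,r}$, while every other term carries a strictly lower power of $\|\psi\|_{C^r}$ and is therefore absorbed into $\|\phi\|_{C^r}\big(1+\|\psi\|_{C^r}^{\,r}\big)$ after collecting the purely combinatorial, $r$-dependent constants into a single $M_r\ge1$. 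The second inequality in (4) follows from $1+\|\psi\|_{C^r}^{\,r}\le 2\|\psi\|_{C^r}^{\,r}$, valid in all our applications since the inner maps we compose with are (lifts of) degree $\pm1$ circle maps, for which $\|\psi\|_{C^r}\ge\|D\psi\|_{C^0}\ge1$.

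The routine-but-delicate step is (4): keeping the Fa\`a di Bruno book-keeping manageable, and especially handling the non-integer regularity correctly --- it is precisely the factor $\|D\psi\|_{C^0}^\alpha$ arising when $H_\alpha$ is commuted with the composition that turns the final exponent into $\|\psi\|_{C^r}^{\,r}$ rather than $\|\psi\|_{C^r}^{\,n}$, so this is where the argument genuinely exploits that $r\notin\nat$ is permitted. All of this is classical; a fully detailed treatment with the explicit constants is in \cite{LlaveO99}, and I would reproduce the short proofs of (1)--(2) and the structure of (3)--(4) here while referring to \cite{LlaveO99} for the combinatorial bookkeeping.
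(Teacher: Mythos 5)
Your proposal is correct, but note that the paper itself does not prove this lemma at all: it is quoted with a citation to \cite{LlaveO99}, where the detailed composition and product estimates in H\"older spaces are worked out. What you supply is the standard self-contained argument, and it is sound: (1) and (2) are exactly the usual one-line estimates; for (3) the Leibniz rule plus your observation that on $\torus$ (diameter $\le \tfrac12$) one has $H_\alpha(g)\le \|Dg\|_{C^0}$ is precisely what is needed to dominate the intermediate seminorms $H_\alpha(D^j\phi)$, $j<n$, by the $C^r$ norms, and your bound is in fact sharper than the stated $2^{2n+1}$; for (4) the Fa\`a di Bruno bookkeeping with (1) used to commute $H_\alpha$ past the composition (producing the extra $\|D\psi\|_{C^0}^\alpha$ and hence the exponent $r=n+\alpha$ rather than $n$) is the right mechanism, and the lower-order terms are absorbed because $t^\sigma\le 1+t^r$ for $0\le\sigma\le r$. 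You are also right to flag that the second inequality in (4) is not universally valid but requires $\|\psi\|_{C^r}\ge 1$; in this paper the inner functions are (lifts of) degree-one circle maps or the map $a$ itself, for which $\|D\psi\|_{C^0}\ge 1$, so the reduction $1+\|\psi\|_{C^r}^r\le 2\|\psi\|_{C^r}^r$ is legitimate in all the uses made of it. In short, your route is the classical proof that the paper delegates to \cite{LlaveO99}, and it is compatible with how the lemma is used later (e.g.\ in the proof of Lemma~\ref{inequal_more_Cr}).
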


\begin{remark}
  If $a: \torus \rightarrow \torus$ is only $\alpha$-H\"older
  for $\alpha < 1$, the H\"older space is not preserved under composition 
and the best that we can have is $H_{\alpha \beta}( \phi \circ a) \le H_\alpha(\phi) H_\beta(a)^\alpha$.
\end{remark}

Based on Lemma \ref{inequal_Halpha}, we can further derive the following
inequalities. These inequalities will be used in the estimation in
Section~\ref{proof}. We extract them here as an extension to \cite{LlaveO99} and
we hope
they can also be used in other applications.

\begin{lemma}[More Inequalities in $C^r$ Space]
  \label{inequal_more_Cr}
  For $\phi, \psi, a \in C^r$, where $a: \mathbb{T} \rightarrow \mathbb{T}$ is a
  differeomorphism. We assume $k, p, q \in \mathbb{N}^+$. The inequalities are as follows:
  \begin{enumerate}
  \item $H_{\alpha}(D^pa \circ a^{\circ k}) \leq
    H_{\alpha}(D^pa)\|Da\|_{C^0}^{k\alpha}$,
  \item $H_{\alpha}(D(a^{\circ k})) \leq kH_{\alpha}(Da)\|Da\|_{C^0}^{k(\alpha
      + 1) - 1}$,
  \item $\|a^{\circ k}\|_{C^r} \leq k^nn! \|Da\|_{C^0}^{r(k - 1)}
    \|a\|_{C^r}^{r+1},$
  \item $\|\phi(a^{\circ k})\|_{C^r}  \leq
    n!k^{n-1}(n+nk+1)\|\phi\|_{C^r}\|a\|_{C^r}^{r+1}\|Da\|_{C^0}^{kr},$
  \item $\|\psi^{[k]}\|_{C^r} \leq k^{n+1}(n+1)! (\|\psi\|_{C^r} +
    \|a\|_{C^r})^{r+1} \|\psi\|_{C^0}^{\max(0, k-n-1)} \|Da\|_{C^0}^{kr}$, where
    $\|\psi\|_{C^0} < 1$, and as in equation~\ref{lambdaiterated}, $\psi^{[k]} = \psi(a^{\circ (k - 1)})
    \cdots \psi(a) \psi$,
  \item If $\|\psi\|_{C^0} < 1$, $\|\phi(a^{\circ k}) \psi^{[k]}\|_{C^r} \leq
    C_{r} \|\phi\|_{C^r} (\|\psi\|_{C^r}
    + \|a\|_{C^r})^{r+1}\|\psi\|_{C^0}^{-n} k^r
    (\|\psi\|_{C^0}\|Da\|_{C^0}^r)^k,$ 
  \item $\| \phi^k \|_{C^r} \leq k^{2(n-1)} \|\phi\|_{C^r}^{\min(k,
      r)}\|\phi\|_{C^{0}}^{\max(k - n - 1, 0)}.$
  \end{enumerate}
\end{lemma}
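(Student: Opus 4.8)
The plan is to derive all seven inequalities from Lemma~\ref{inequal_Halpha} by induction on the number of compositions $k$, keeping careful track of how the $C^0$-norm of $Da$ accumulates under iteration. The organizing observation is that $D(a^{\circ k})(\theta) = \prod_{i=0}^{k-1} Da(a^{\circ i}(\theta))$ by the chain rule, so $\|D(a^{\circ k})\|_{C^0}\le \|Da\|_{C^0}^k$, and more generally each application of the chain rule or Leibniz rule to $a^{\circ k}$ produces a sum of products of lower-order derivatives of $a$ evaluated along the orbit, each factor contributing a power of $\|Da\|_{C^0}$. The combinatorial bookkeeping (the factorials $n!$, the polynomial-in-$k$ prefactors) comes from counting how many such terms appear.

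Concretely, I would proceed in this order. First, (1) is immediate from part (1) of Lemma~\ref{inequal_Halpha} applied with $\phi$ replaced by $D^p a$ and $a$ replaced by $a^{\circ k}$, together with $\|D(a^{\circ k})\|_{C^0}\le\|Da\|_{C^0}^k$. For (2), write $D(a^{\circ k}) = \prod_{i=0}^{k-1} Da\circ a^{\circ i}$, apply the product rule for $H_\alpha$ (part (2) of Lemma~\ref{inequal_Halpha}) inductively over the $k$ factors — this is where the factor $k$ in front comes from — and bound $H_\alpha(Da\circ a^{\circ i})$ using (1) and the $C^0$ bounds. Then (3) follows from (1), (2) and Faà di Bruno / iterated chain rule: differentiating $a^{\circ k}$ up to order $n$ yields at most $k^n$ (up to the $n!$ from the partition count) terms, each a product of derivatives of $a$ of total order $\le n$ composed along the orbit, so each term is bounded by $\|a\|_{C^r}^{r+1}\|Da\|_{C^0}^{r(k-1)}$ roughly; the Hölder seminorm of the top derivative is controlled by (1). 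Inequality (4) is the composition $\phi\circ a^{\circ k}$: apply part (4) of Lemma~\ref{inequal_Halpha} with $\psi = a^{\circ k}$ and substitute the bound (3), simplifying the resulting exponents. For (5), $\psi^{[k]} = \prod_{i=0}^{k-1}\psi\circ a^{\circ i}$ is a product of $k$ factors; apply part (3) of Lemma~\ref{inequal_Halpha} (the Banach-algebra property of $C^r$) inductively — but here one must be careful, since naively iterating $\|\cdot\|_{C^r}$-submultiplicativity $k$ times gives a constant $2^{(2n+1)k}$ that is exponential in $k$, which is too weak; instead one uses that $\|\psi\|_{C^0}<1$ to absorb all but about $n+1$ of the factors at the $C^0$ level (a product of $m$ functions has $C^r$-norm controlled by the $C^r$-norm of $n+1$ of them times the $C^0$-norms of the rest, up to a polynomial-in-$m$ factor, because only up to $n$ derivatives are ever distributed). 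This is the key mechanism and explains both the $\|\psi\|_{C^0}^{\max(0,k-n-1)}$ factor and the merely polynomial prefactor $k^{n+1}(n+1)!$. Inequality (6) combines (4) and (5) via part (3) of Lemma~\ref{inequal_Halpha} applied to the product $(\phi\circ a^{\circ k})\cdot\psi^{[k]}$, again using $\|\psi\|_{C^0}<1$ to control the accumulated $\lambda$-powers and re-collecting exponents so the $k$-dependence is $(\|\psi\|_{C^0}\|Da\|_{C^0}^r)^k$ times a polynomial. Finally (7), $\|\phi^k\|_{C^r}$, is the special case of (5)-style reasoning with $\psi=\phi$ and $a=\mathrm{id}$: distributing at most $n$ derivatives among $k$ copies of $\phi$ gives at most $\min(k,r)$ factors carrying a derivative and the rest at $C^0$, with a $k^{2(n-1)}$ count from choosing which copies are differentiated and to what order.

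The main obstacle is the uniformity in $k$: the challenge throughout is to avoid the exponential-in-$k$ constants that a lazy induction produces, and instead extract the polynomial-in-$k$ bounds stated. This requires, at each step, a Leibniz/Faà-di-Bruno argument that distributes only finitely many ($\le n$) derivatives among the $k$ factors and places the remaining factors in $C^0$, where $\|\psi\|_{C^0}<1$ (or boundedness) keeps the product under control; one then counts the terms combinatorially to get the factorial and polynomial prefactors. A secondary technical point is handling the Hölder seminorm of the top-order derivative separately via the rules in Lemma~\ref{inequal_Halpha} parts (1)--(2), since $H_\alpha$ is not submultiplicative in the same way as $\|\cdot\|_{C^r}$. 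I expect inequality (5) to contain the crux of the argument, with (6) and (3) as the next most delicate; (1), (2), (4) and (7) are then comparatively routine once (3) and (5) are in place.
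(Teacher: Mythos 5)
Your plan for items (1), (2), (3), (5) and (7) is essentially the paper's own argument: direct Leibniz/Fa\`a di Bruno expansions of the iterated compositions, counting terms and factors, distributing at most $n$ derivatives among the $k$ factors and putting the remaining factors in $C^0$ (which is where $\|\psi\|_{C^0}^{\max(0,k-n-1)}$ and the polynomial-in-$k$ prefactors come from), with the H\"older seminorm of the top derivative handled through parts (1)--(2) of Lemma~\ref{inequal_Halpha}. However, your proposed derivations of (4) and (6) contain a genuine gap. For (4), applying Lemma~\ref{inequal_Halpha}(4) with $\psi=a^{\circ k}$ and then substituting (3) gives $\|\phi(a^{\circ k})\|_{C^r}\le 2M_r\|\phi\|_{C^r}\|a^{\circ k}\|_{C^r}^{r}\le 2M_r\,(k^nn!)^r\,\|\phi\|_{C^r}\,\|a\|_{C^r}^{r(r+1)}\,\|Da\|_{C^0}^{r^2(k-1)}$, which is strictly weaker than the stated inequality: the exponent of $\|Da\|_{C^0}$ is $r^2k$ instead of $rk$, the power of $\|a\|_{C^r}$ is $r(r+1)$ instead of $r+1$, and the prefactor grows like $k^{nr}$. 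Since $a$ is a degree-one circle diffeomorphism, $\|Da\|_{C^0}\ge 1$, so no ``simplification of exponents'' recovers $\|Da\|_{C^0}^{kr}$ from $\|Da\|_{C^0}^{r^2k}$. The same problem recurs, independently, in your route to (6): multiplying the bounds (4) and (5) via the algebra inequality of Lemma~\ref{inequal_Halpha}(3) compounds the two factors $\|Da\|_{C^0}^{kr}$ and produces a $k$-dependence $(\|\psi\|_{C^0}\|Da\|_{C^0}^{2r})^{k}$; the hypothesis $\|\psi\|_{C^0}<1$ cannot absorb the extra $\|Da\|_{C^0}^{rk}$. These exponents are precisely the point of the lemma: (6) feeds into Lemma~\ref{lemma_coho}, where convergence of $\sum_j\|l^{[j]}\eta(a^{\circ j})\|_{C^r}$ is obtained exactly under $\|l\|_{C^0}\|Da\|_{C^0}^{r}<1$; with your weaker versions one would need $\|l\|_{C^0}\|Da\|_{C^0}^{2r}<1$ (or worse), shrinking the admissible range of $r$ and destroying the sharpness matched by the Weierstrass-type example \eqref{weierstrass}.

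The fix --- and what the paper actually does --- is to treat (4) and (6) by the same direct counting you already use for (3) and (5), rather than by black-box applications of Lemma~\ref{inequal_Halpha}(3)--(4). For (4), expand $D^{n}[\phi(a^{\circ k})]$ directly: each term contains exactly one factor $D^{p}\phi\circ a^{\circ k}$, at most $n$ factors of higher derivatives $D^{p}a\circ a^{\circ q}$, and at most $nk$ factors of $Da\circ a^{\circ q}$, so the total power of $\|Da\|_{C^0}$ is $nk$ (and $kr$ after the H\"older part), with $\|a\|_{C^r}$ appearing to power $r+1$ only. For (6), use the Leibniz expansion $D^{n}\bigl(\phi(a^{\circ k})\psi^{[k]}\bigr)=\sum_{q=0}^{n}\binom{n}{q}D^{n-q}\phi(a^{\circ k})\,D^{q}\psi^{[k]}$: the $n$ derivatives are shared between the two factors, so the total power of $\|Da\|_{C^0}$ across each product remains $nk$ instead of doubling, which is what yields the stated $k^{r}(\|\psi\|_{C^0}\|Da\|_{C^0}^{r})^{k}$.
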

\begin{proof}
  By Lemma~\ref{inequal_Halpha}, we have
  \begin{enumerate}
  \item $H_{\alpha}(D^pa \circ a^{\circ k}) \leq H_{\alpha}(D^pa) \|Da^{\circ
      k}\|_{C^0}^{\alpha} \leq H_{\alpha}(D^pa)\|Da\|_{C^0}^{k\alpha}$,
  \item $H_{\alpha}(Da^{\circ k}) \leq k \|Da\|_{C^0}^{k - 1} \max_{0 \leq j
      \leq k} {H_{\alpha}(Da \circ a^{\circ j})} \leq  k 
    H_{\alpha}(Da) \|Da \|_{C^0}^{k(\alpha + 1) - 1}$ ,
  \item Suppose $D^p(a^{\circ k})$ has $T_p$ terms, each term has $F_p$ factors,
    then by
    $$ F_{p + 1} \leq F_p + k - 1, T_{p + 1} \leq T_p F_p \text{ and } F_1 = k,
    T_1 = 1,$$
    we have $F_n \leq nk, T_n \leq k^n(n - 1)!$, for the same $n = \floor{r}$.

    In each term, at most $n(k - 1)$ factors are $Da \circ a^{\circ q}$, at most
    $n$ factors are $D^p(a) \circ a^{\circ q}$, where $0 \leq p \leq n, 0 \leq q
    \leq k$.

    Thus we have
    $$ \|D^na^k\|_{C^0} \leq k^n(n - 1)! \|a\|_{C^n}^n \|Da\|_{C^0}^{n(k-1)}.$$

    We also have
    \begin{align*}
      H_{\alpha}(D^n a^{\circ k}) &\leq k^n(n - 1)! H_{\alpha}(\text{each term in } D^na^k)\\
      &\leq k^n(n - 1)! \Big(n \|Da\|_{C^0}^{n(k-1)} \|a\|_{C^n}^{n - 1} \max_{0 \leq p \leq n, 0 \leq q
        \leq k} H_{\alpha}(D^pa \circ a^{\circ q}) \\
                                  & \ \ + n(k - 1) \|Da\|_{C^0}^{n(k - 1) - 1} \max_{0 \leq q \leq k} H_{\alpha}(Da \circ a^{\circ q}) \|a\|_{C^n}^n \Big)\\
                                  & \leq k^n(n - 1)! \Big(n
                                  \|Da\|_{C^0}^{n(k-1)} \|a\|_{C^n}^{n - 1}
                                  H_{\alpha}(D^n a) \|Da\|_{C^0}^{k\alpha} \\
                                  & \ \ + n(k - 1) \|Da\|_{C^0}^{n(k - 1) - 1} H_{\alpha}(Da) \|Da\|_{C^0}^{k\alpha} \|a\|_{C^n}^n\Big) \\
      & \leq k^{n + 1}n! \|Da\|_{C^0}^{kr - n} \|a\|_{C^r}^{r+1}
    \end{align*}

    Above all, we have
    $$\|a^{\circ k}\|_{C^r} \leq \max\Big(\|a^{\circ k}\|_{C^n}, H_{\alpha}(D^n
      a^{\circ k})\Big) \leq k^nn! \|Da\|_{C^0}^{r(k - 1)} \|a\|_{C^r}^{r+1}.$$

  \item By the same notation and same method as (3), we have $F_n \leq (n + 1)k,
    T_n \leq n!k^{n - 1}$. In each term, at most $n$ factors of $D^pa \circ
    a^{\circ q}$, at most $nk$ factors of $Da \circ a^{\circ q}$ and there is
    a term of $D^p(\phi) \circ a^k$, where $0 \leq p \leq n, 0 \leq q
    \leq k$.

    It follows that 
    $$ \|D^n[\phi(a^{\circ k})]\|_{C^0} \leq n!k^{n -
      1}\|\phi\|_{C^n}\|a\|_{C^n}^n\|Da\|_{C^0}^{nk}, $$
    and
    $$ H_{\alpha}(D^n[\phi(a^{\circ k})]) \leq n! k^{n - 1} (n + nk + 1)
    \|a\|_{C^r}^{r+1} \|Da \|_{C^0}^{kr} \|\phi\|_{C^r}.$$

    Thus, we have
    $$\|\phi(a^{\circ k})\|_{C^r} \leq n!k^{n-1}(n+nk+1)\|\phi\|_{C^r}\|a\|_{C^r}^{r+1}\|Da\|_{C^0}^{kr}.$$

  \item
    By running the same analysis on $\psi^{[k]}$, we have $F_n \leq k(n+1), T_n
    \leq k^nn!$, and for each term, there are at least $\max(k - n, 0)$ factors
    of $\psi$, at most $n$ factors of either $D^pa \circ a^{\circ q}$ or $D^p\psi
    \circ a^{\circ q}$, at most $nk$ factors of $Da \circ a^{\circ q}$,
    where $0 \leq p \leq n, 0 \leq q \leq k$.

    It follows that
    $$ \| D^n \psi^{[k]} \|_{C^0} \leq k^nn! \| \psi\|_{C^0}^{\max(k - n, 0)}
    (\|\psi\|_{C^n} + \|a\|_{C^n})^n \|Da\|_{C^0}^{nk},$$
    and
    $$ H_{\alpha}(D^n\psi^{[k]}) \leq k^{n+1}(n+1)! \|\psi\|_{C^0}^{\max(0,
      k-n-1)} (\|\psi\|_{C^r} + \|a\|_{C^r})^{r+1}\|Da\|_{C^0}^{kr}.$$
    Above all,
    $$ \|\psi^{[k]}\|_{C^r} \leq k^{n+1}(n+1)! (\|\psi\|_{C^r} +
    \|a\|_{C^r})^{r+1} \|\psi\|_{C^0}^{\max(0, k-n-1)} \|Da\|_{C^0}^{kr}.$$
  \item
    Since
    $$ D^n(\phi(a^{\circ k})\psi^{[k]}) = \sum_{q = 0}^n \begin{pmatrix}n \\ q
    \end{pmatrix} D^{n-q}\phi(a^k) D^q \psi^{[k]},
    $$
    and with the previously derived results, we have (with the tedious
    computation omitted), that
    $$ \|\phi(a^{\circ k}) \psi^{[k]}\|_{C^r} \leq C_{r} \|\phi\|_{C^r} (\|\psi\|_{C^r}
    + \|a\|_{C^r})^{r+1}\|\psi\|_{C^0}^{-n} k^r
    (\|\psi\|_{C^0}\|Da\|_{C^0}^r)^k,$$
    where $C_{r}$ is formed by only the power
    series and factorials of $r$.

  \item
    As for $\|\phi^k\|_{C^r}$, we know $D^n (\phi^k)$ has $k^{n-1}$ terms, each
    term has $k$ factors, and each term has at most $\min(k, n)$ factors of $D^p
    \phi$ with the rest of the terms are $\phi$, we have
    $$ \| \phi^k \|_{C^r} \leq k^{2(n-1)} \|\phi\|_{C^r}^{\min(k,
      r)}\|\phi\|_{C^{0}}^{\max(k - n - 1, 0)}.$$
  \end{enumerate}
\end{proof}

Lemma \ref{inequal_Halpha} also implies the following inequality in
$\mathcal{X}^{r, \delta}$ space.
\begin{lemma}[Inequalities in $\mathcal{X}^{r, \delta}$ space]
  \label{lemma_inequalityXr}
  Given $f, g \in \mathcal{X}^{r, \delta}$ we have
  \begin{itemize}
    \item $\left\| f \cdot g \right\|_{\mathcal{X}^{r, \delta}} \leq 2^{2n+1} \left\| f
      \right\|_{\mathcal{X}^{r, \delta}} \left\| g \right\|_{\mathcal{X}^{r, \delta}},$
  \end{itemize}
\end{lemma}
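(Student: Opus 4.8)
The plan is to reduce everything to the Banach-algebra inequality for $C^r$, namely part (3) of Lemma~\ref{inequal_Halpha}, combined with the fact that multiplication of power series is the Cauchy product and that the $\mathcal{X}^{r,\delta}$ norm is an $\ell^1$-type weighted sum of $C^r$ norms of the Taylor coefficients. So first I would write $f(\theta,s) = \sum_{j\ge 0} f^{(j)}(\theta) s^j$ and $g(\theta,s) = \sum_{k\ge 0} g^{(k)}(\theta)s^k$ with $f^{(j)}, g^{(k)} \in C^r$, and observe that the product has Taylor coefficients $(f\cdot g)^{(m)}(\theta) = \sum_{j+k=m} f^{(j)}(\theta)\,g^{(k)}(\theta)$.

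Next I would estimate each coefficient: by the triangle inequality in $C^r$ and then part (3) of Lemma~\ref{inequal_Halpha},
\[
\bigl\| (f\cdot g)^{(m)} \bigr\|_{C^r} \le \sum_{j+k=m} \bigl\| f^{(j)} g^{(k)} \bigr\|_{C^r} \le 2^{2n+1} \sum_{j+k=m} \bigl\| f^{(j)} \bigr\|_{C^r} \bigl\| g^{(k)} \bigr\|_{C^r}.
\]
Multiplying by $\delta^m = \delta^j \delta^k$ and summing over $m$, the double sum factors as a product of two single sums (all terms being non-negative, the rearrangement is automatically justified and simultaneously shows $f\cdot g \in \mathcal{X}^{r,\delta}$, i.e. the sum is finite):
\[
\| f\cdot g \|_{\mathcal{X}^{r,\delta}} = \sum_{m\ge 0} \bigl\| (f\cdot g)^{(m)} \bigr\|_{C^r}\delta^m \le 2^{2n+1} \Bigl( \sum_{j\ge 0} \| f^{(j)} \|_{C^r}\delta^j \Bigr)\Bigl( \sum_{k\ge 0} \| g^{(k)} \|_{C^r}\delta^k \Bigr) = 2^{2n+1} \| f \|_{\mathcal{X}^{r,\delta}} \| g \|_{\mathcal{X}^{r,\delta}},
\]
which is exactly the claimed bound.

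There is essentially no hard part here: the only point requiring a word of care is the interchange of the order of summation in the double series, but since every summand is non-negative this is simply Tonelli's theorem for sums (or monotone convergence), and it also delivers for free the membership $f\cdot g \in \mathcal{X}^{r,\delta}$. If one wanted the cleanest possible write-up, one could phrase the whole argument as: the map $u \mapsto (\|u^{(j)}\|_{C^r})_{j\ge 0}$ embeds $\mathcal{X}^{r,\delta}$ isometrically into the weighted $\ell^1$ space $\ell^1(\delta^j\,dj)$, which is a Banach algebra under convolution with constant $1$, and $C^r$ is a Banach algebra with constant $2^{2n+1}$; the product of these two constants is the asserted bound.
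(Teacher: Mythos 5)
Your argument is correct and is exactly the route the paper intends: the paper states the lemma as an immediate consequence of the $C^r$ product inequality (item (3) of Lemma~\ref{inequal_Halpha}) without writing details, and your Cauchy-product computation with the factorization of the non-negative double sum is precisely that omitted verification. Nothing further is needed.
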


By \cite{LlaveO99}, the $C^r(\mathbb{T}, X)$ space, thus the $\mathcal{X}^{r,
  \delta}$ space we are considering in this paper admits a scale of Banach
Spaces with continuous inclusion. In other word, for $0 \leq r \leq s$, we have
$C^s(U, X) \subset C^r(U, X)$ and $\mathcal{X}^{s, \delta} \subset
\mathcal{X}^{r, \delta}$.

\begin{remark}
  Generally speaking, the scale of spaces $C^r(U, X)$ does not admits continuous
  inclusion for general domain $U$ (counterexample can be found in
  \cite{LlaveO99}). The continuous inclusion is guaranteed when $U$ is a
  compensated open set \cite{LlaveO99}. In our case, the domain of the functions
  is torus, which is a simple compensated open set.
\end{remark}

\subsubsection{Smoothing Operators}

To develop the Nash-Moser smoothing technique, for a scale of Banach spaces
$\mathcal{X}^{r, \delta}$, we need the
existence of a family of smoothing operators defined as follows:
\begin{definition}[Smoothing Operator]
\label{def:abstract_smoothing}
  For a scale of Banach spaces $X_r$, a family of smoothing operators $\{S_t\}_{t \in \mathbb{R}^+}$ satisfies
  \begin{equation} \label{smoothing1}
    \left\| S_t u\right\|_{\mu} \leq t^{\mu - \lambda} C_{\lambda, \mu}
    \left\| u\right\|_{\lambda} \text{ for } u \in X_{\lambda}
  \end{equation}
  and 
  \begin{equation} \label{smoothing2}
    \left\| (S_t - I) u\right\|_{\lambda} \leq t^{-(\mu - \lambda)} C_{\lambda, \mu}
    \left\| u\right\|_{\mu} \text{ for } u \in X_{\mu}
  \end{equation}
  for $\mu \geq \lambda \geq 0$, where $t$ is the strength of smoothing.
\end{definition}

\begin{remark} \label{C_r_smoothing}
  When $X$ is a Banach space, the existence of the $C^{r}$-smoothing operator in $C^r(\mathbb{T}, X)$
  is studied in \cite{Zehnder75}.
\end{remark}

With such smoothing operator in $C^r$ space, we can define the smoothing
operator for a function $u(\theta, s) = \sum_{j = 0}^{\infty}u^{(j)}(\theta)s^j
\in \mathcal{X}^{r, \delta}$ by smoothing each of $u^{(j)}(\theta)$ for $j \geq 0$. More
precisely, we have
\begin{definition}[Smoothing Operator in $\mathcal{X}^{r, \delta}$]
  For $u(\theta, s) = \sum_{j=0}^{\infty} u^{(j)}(\theta) s^j \in
  \mathcal{X}^r$, the smoothing operator $S_t$ is defined
  as follows:
  \begin{equation} \label{smoothing0}
    S_t u(\theta, s) = \sum_{j = 0}^{\infty}\hat{S}_tu^{(j)}(\theta)s^j.
  \end{equation}
  where $\hat{S}_t$ is the smoothing operator in $C^r$ space defined in Remark \ref{C_r_smoothing}.
\end{definition}

In our problem, since \eqref{invariance} has unknowns which are 
triples of functions, $(W,a,\lambda)$, we will see that the smoothing operators
defined so far, lead straightforwardly to smoothing operators in the space of 
triples.  See Section~\ref{sec:triples}. 

Note that the definition of smoothing in $\mathcal{X}^{r, \delta}$ defined above 
is the standard $C^r$ smoothing applied spaces of $C^r$ functions 
taking values in a space of analytic functions as discussed in 
Remark~\ref{several_spaces}.

It is standard  to see that this operator $S_t$ defined in
\eqref{smoothing0} satisfies condition \eqref{smoothing1} and
\eqref{smoothing2}, thus it is indeed a smoothing operator in $\mathcal{X}^{r, \delta}$.

\begin{remark}
As shown in \cite{Zehnder75, dlL01, LlaveO99}, 
the existence of the smoothing operators implies the interpolation
  inequality, which is for any $0 \leq \lambda \leq \mu$, $0 \leq \gamma
  \leq 1$, and $v = (1 - \gamma) \lambda + \gamma \mu$, we have
  \begin{equation} \label{interpolation}
    \left\| u\right\|_v \leq C_{\gamma, \lambda, \mu} \left\| u
    \right\|_{\lambda}^{1 - \gamma} \left\| u\right\|_{\mu}^{\gamma}.
  \end{equation}

Obtaining \eqref{interpolation} as a corollary of smoothing, 
leads to the conclusion only in the case that $v$ is not an integer. 
In \cite{LlaveO99}, there is a direct proof regarding this in greater generality. 
\end{remark}

\subsection{The $\mathscr{X}^{r, \delta}$ and $\mathscr{Y}^{r, \delta}$ Space} 
\label{sec:triples} 

Our problem of solving \eqref{invariance} seeks triples of 
functions (the embedding $W$, the inner dynamics in the circle $a$ and 
the dynamics on the stable manifolds $\lambda$).  We will need spaces of triple of functions. 
In this section, we specify the topologies we have found useful. 

We now can define the scale of spaces $\mathscr{X}^{r, \delta}$ and
$\mathscr{Y}^{r, \delta}$ by the product of Banach spaces as follows:
\begin{definition}\label{def:triples} 
  Define the product space $\mathscr{X}^{r, \delta} = \mathcal{X}^{r, \delta} \times \mathcal{X}^{r, \delta}
  \times C^r \times C^r$ with norm
  $$ \left\| u \right\|_{\mathscr{X}^{r, \delta}} = \left\| W_1
  \right\|_{\mathcal{X}^{r, \delta}} + \left\| W_2 \right\|_{\mathcal{X}^{r, \delta}} +
  \left\| a \right\|_{C^r} + \left\| \lambda \right\|_{C^r}, $$
  where $u = (W_1(\theta, s), W_2(\theta, s), a(\theta), \lambda(\theta)) \in
  \mathscr{X}^{r, \delta}$. Similarly, define space $\mathscr{Y}^{r, \delta} = \mathcal{X}^{r, \delta} \times
  \mathcal{X}^{r, \delta}$ with norm
  $$ \left\| v \right\|_{\mathscr{Y}^{r, \delta}} = \left\| W_1
  \right\|_{\mathcal{X}^{r, \delta}} + \left\| W_2 \right\|_{\mathcal{X}^{r, \delta}},$$
  where $v = (W_1(\theta, s), W_2(\theta, s)) \in \mathscr{Y}^{r, \delta}$.
\end{definition}
\begin{remark}
  $\mathscr{X}^{r, \delta}, \mathscr{Y}^{r, \delta}$ are both scales of Banach spaces with smoothing
  operators. The smoothing operators comes natually from the smoothing
  operators in $C^r$ and $\mathcal{X}^r$ spaces. 
\end{remark}

\begin{remark}
  For the rest of the paper, we will always denote $u(\theta, s) \in
  \mathscr{X}^{r, \delta}$ to be the triplet $(W(\theta, s), a(\theta), \lambda(\theta))$,
  and we will not distinguish among $\left\| \cdot
  \right\|_{\mathscr{X}^{r, \delta}}$, $\left\| \cdot
  \right\|_{\mathscr{Y}^{r, \delta}}$, $\left\| \cdot \right\|_{C^r}$ and
  $\left\| \cdot \right\|_{r}$ when $\delta$ and the
  dimension of the function are understood.
\end{remark}

\section{Statement of The Analytical Result} \label{result}

In this section, we present the statement of the main result: Theorem~\ref{main}. 

As anticipated, the proof is obtained through a Nash-Moser method, 
alternating the quasi-Newton method with some smoothing steps. As discussed 
in Section~\ref{implicitremark}, the problem at hand is somewhat different 
from other previous applications of Nash-Moser technique.  The loss of 
differentiability in the estimates comes from the operator in the functional. 
The solutions of the linearized equation do not lose regularity, but they 
only work for a range of regularities. 

Since the Nash-Moser method requires alternating the quasi-Newton 
method and smoothings, we start formulating the standard setup. 
This is a scale of Banach spaces. The smoothing operators map the
spaces of less regular functions into the spaces of more regular functions
and they have quantitative properties. 

By the scale of spaces and the smoothing operators in 
Section~\ref{space}, we 
formulate our main result Theorem~\ref{main} and proceed to the proof
in Section~\ref{proof_proof}. Theorem~\ref{main} implies rather directly 
the result for foliations. We just need to verify that the operator 
entering in equation~\eqref{invariance} satisfies the hypotheses of
Theorem~\ref{main}.

As indicated in Section~\ref{implicitremark}, the implicit function 
theorem we use will require some unusual properties in Nash-Moser 
theory: We need spaces with anisotropic regularity,  the 
linearized equation does not incur any loss of regularity, 
but can only be applied in a range of regularities. This will 
require some severe adaptations from the standard expositions and the methods
based on
analytic or $C^\infty$ smoothing cannot work here.

Recall that our goal is to find $W(\theta, s), a(\theta) \text{ and } \lambda(\theta)$ 
satisfying the invariance equation~\eqref{invariance}. In other words, given $r
\geq 0, \delta > 0$, we are looking for
the zero of the functional $\mathscr{F}: \mathscr{X}^r \rightarrow \mathscr{Y}^r$
where
\begin{equation}
  \label{functional}
  \mathscr{F}[u] = \mathscr{F}[W, a, \lambda](\theta, s) = f(W(\theta, s)) -
W(a(\theta), \lambda(\theta)s), 
\end{equation}
for $u = (W, a, \lambda) \in \mathscr{X}^{r, \delta}$.

Before presenting the main Theorem~\ref{main}, we first define
\textbf{Condition-0} as follows:
\begin{definition}[Condition-0]
  \label{condition0}
  For any sufficiently small $\delta, \rho > 0$. Given $m \in \mathbb{R}$, $W: \mathbb{T}
  \times \mathbb{R} \rightarrow
  \mathbb{T} \times \mathbb{R}$, $a: \mathbb{T}  \rightarrow \mathbb{T}$ and
  $\lambda: \mathbb{T} \rightarrow \mathbb{R}$, we say that the tuple $(m, 
  W, a, \lambda)$ satisfies \textbf{Condition-0} if the following restrictions hold:
  \begin{enumerate}
  \item $\|\lambda\|_{C^0} < 1$,
  \item $(W, a, \lambda) \triangleq u \in \mathscr{X}^{m + 2, \delta}$,
  \item For $\widetilde{B}_{m+2}(\rho) \subset \mathscr{X}^{m+2}$ is the ball centered at $u
    = (W, a, \lambda)$ with radius $\rho$, 
    \begin{equation*}
      \min_{u \in \widetilde{B}_{m+2}(\rho)} \min \Big(-\frac{\ln
      \|\lambda\|_{C^0} \|(Da)^{-1}\|_{C^0}}{\ln \|Da\|_{C^0}}, -\frac{\ln
      \|\lambda\|_{C^0}}{\ln \|D(a^{-1})\|_{C^0}}, -\frac{\ln \|\lambda\|_{C^0}}{\ln
      \|Da\|_{C^0}}\Big) - 2 \geq m \geq 2.
    \end{equation*}
  \end{enumerate}
\end{definition}

\begin{remark}
  Restriction (1) can be generalized to $\lambda^* < 1$, where $\lambda^*$ is
  The Dynamical average. If $\lambda^*$ is used, one also need to adapt
  condition (3) accordingly (see Remark~\ref{general_coho}).
\end{remark}

\begin{remark}
  Restriction (3) is to guarantee $m$ is bounded above in such a way that the
  regularity requirement for solving cohomological equations \eqref{eq1_higher}, \eqref{eq2_order0}
  and \eqref{eq2_higher} covers the scale of regularities in Theorem~\ref{main}.
  (See Lemma~\ref{lemma_coho} for more details).
\end{remark}

Following the scheme derived
in Section~\ref{algorithm}, we present a theorem for the existence of solution for
$\mathscr{F}[u] = 0$:

\begin{theorem}
  \label{main}
  For sufficiently small $\delta > 0, \rho > 0$, suppose there exists a tuple $(m, W_0, a_0,
  \lambda_0)$ satisfying Condition-0.

  Let $\mathscr{X}^{r, \delta}$ and $\mathscr{Y}^{r, \delta}$ be two scales of
  Banach spaces for $m \le r \leq m + 2$.

  Consider the functional $\mathscr{F}: \widetilde{B}_r(\rho)
  \rightarrow \mathscr{Y}^r$ defined in \eqref{functional}, where
  $\widetilde{B}_r(\rho)$ is a ball centered at $u_0 \triangleq (W_0, a_0, \lambda_0) \in
  \mathscr{X}^{m+2, \delta}$ with radius $\rho$.

  If $\left\| \mathscr{F}[u_0]\right\|_{\mathscr{X}^{m - 2, \delta}}$ is
  sufficiently small, then there exists
  $u^* \in \widetilde{B}_m(\rho)$ such that $\mathscr{F}[u^*] = 0$.

  Moreover, such $u^*$ is the limit of the iteration 
  combining with some smoothing operators. The smoothing parameters 
  go to zero, and the specific rates will be given in the proof. 
  Furthermore,  the convergence of the iterations to the limit is superexponential. 

As a consequence, we have that 
\[
\| u^* - u_0 \|_{\mathscr{X}^{m, \delta}} \le C \|\mathscr{F}(u_0)
\|_{\mathscr{X}^{m-2, \delta}},
\]
where $C$ is a finite constant.
\end{theorem}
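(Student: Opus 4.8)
The plan is to deduce Theorem~\ref{main} from the abstract Nash–Moser implicit function theorem, Theorem~\ref{NMIFT}, in Appendix~\ref{appendix}. The abstract theorem is stated for an operator on a scale of Banach spaces with smoothing operators; by Section~\ref{sec:triples} the spaces $\mathscr{X}^{r,\delta}$ and $\mathscr{Y}^{r,\delta}$ form such a scale for $m \le r \le m+2$. So the work is entirely in verifying that the functional $\mathscr{F}$ of \eqref{functional} satisfies the hypotheses of Theorem~\ref{NMIFT}: boundedness and appropriate tame estimates for $\mathscr{F}$ itself, differentiability of $\mathscr{F}$ with controlled loss of derivatives, the existence of an approximate right inverse of $D\mathscr{F}$ defined on the relevant range of regularities (this is the quasi-Newton step of Section~\ref{algorithm}, whose solvability of the cohomological equations \eqref{eq1_higher}, \eqref{eq2_order0}, \eqref{eq2_higher} is ensured by Condition-0 through Lemma~\ref{lemma_coho}), and the quadratic-type estimate on the new error after one quasi-Newton correction (the ``heuristically quadratically small'' terms omitted in \eqref{newton} and \eqref{cohom} — controlled rigorously in Lemma~\ref{lemma_condition4}). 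These verifications are exactly the content of Lemmas~\ref{lemma_condition1}--\ref{lemma_condition5} referenced in Section~\ref{proof_proof}, and they are assembled there using the inequalities of Lemmas~\ref{inequal_Halpha}, \ref{inequal_more_Cr}, \ref{lemma_inequalityXr}.

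Concretely, first I would record the setup: fix $\delta,\rho>0$ small and a tuple $(m,W_0,a_0,\lambda_0)$ satisfying Condition-0, so that $u_0\in\mathscr{X}^{m+2,\delta}$ and the dynamical ratios in Condition-0(3) guarantee the smallness condition \eqref{finite_reg} holds for every regularity $r\in[m,m+2]$ at every point of $\widetilde{B}_{m+2}(\rho)$. Second, I would state the five conditions needed by Theorem~\ref{NMIFT} and cite Lemmas~\ref{lemma_condition1}--\ref{lemma_condition5} for each: the tame estimates on $\mathscr{F}$, the differentiability with derivative loss, the approximate-inverse construction together with its tame bounds, the quadratic remainder bound, and whatever nondegeneracy/condition-number bound is still required. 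Third, I would invoke Theorem~\ref{NMIFT}: since $\|\mathscr{F}[u_0]\|_{\mathscr{X}^{m-2,\delta}}$ is taken sufficiently small relative to the condition numbers just established, the abstract theorem produces $u^*\in\widetilde{B}_m(\rho)$ with $\mathscr{F}[u^*]=0$, obtained as the limit of the Newton-plus-smoothing iterates with prescribed (geometrically shrinking) smoothing parameters, and the convergence is superexponential in the $\mathscr{X}^{m,\delta}$ norm. Fourth, the a-posteriori bound $\|u^*-u_0\|_{\mathscr{X}^{m,\delta}}\le C\,\|\mathscr{F}(u_0)\|_{\mathscr{X}^{m-2,\delta}}$ is read off from the same iteration: the first correction is linear in the initial error with a constant controlled by the approximate-inverse bound, and the sum of all subsequent corrections is dominated by a geometric series whose ratio is the (small) initial error, so the total displacement is $O$ of the initial error; this is the standard telescoping estimate in Nash–Moser schemes and is part of the conclusion of Theorem~\ref{NMIFT}.

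The main obstacle is the verification of the quasi-Newton step in the anisotropic, finite-regularity setting — i.e.\ showing that solving \eqref{eq1_higher}, \eqref{eq2_order0}, \eqref{eq2_higher} by the geometric series \eqref{coho_solution} gives an \emph{approximate right inverse} of $D\mathscr{F}$ with tame bounds in $\mathscr{X}^{r,\delta}$ uniformly for $r$ in the admissible window, and that the omitted terms $De(\theta,s)\Gamma(\theta,s)$ and $D[\Delta_W(a(\theta),\lambda(\theta)s)]\binom{\Delta_a}{\Delta_\lambda s}$ really are quadratically small in the relevant norms. The subtlety is twofold: the composition operator is not differentiable from $C^r$ to $C^r$ (so the usual ``derivative of the remainder'' bookkeeping loses derivatives, as flagged in Section~\ref{implicitremark}), and the cohomological equations are solvable only on a finite range of $r$ — there is no $C^\infty$ or analytic reservoir to smooth into, so the Nash–Moser scheme must live entirely inside the band $[m,m+2]$ and the smoothing in the $\theta$ variable must be calibrated against precisely that band. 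All of this is carried out in Lemmas~\ref{lemma_coho} and \ref{lemma_condition1}--\ref{lemma_condition5}, with the estimates of Lemma~\ref{inequal_more_Cr} (particularly parts (5) and (6), which bound the $C^r$ norms of the cocycle products $\psi^{[k]}$ and $\phi(a^{\circ k})\psi^{[k]}$) doing the heavy lifting to show the series \eqref{coho_solution} converges geometrically in $C^r$; once those are in hand the deduction from Theorem~\ref{NMIFT} is essentially formal.
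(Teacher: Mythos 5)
Your proposal follows exactly the paper's own route: the paper proves Theorem~\ref{main} by verifying the five non-degeneracy hypotheses of the abstract Nash--Moser Theorem~\ref{NMIFT} one by one (Lemmas~\ref{lemma_condition1}--\ref{lemma_condition5}, resting on Lemma~\ref{lemma_coho} and the $C^r$ inequalities of Lemmas~\ref{inequal_Halpha}--\ref{inequal_more_Cr}) and then invokes Theorem~\ref{NMIFT}, whose conclusion already contains the superexponential convergence and the a-posteriori bound $\|u^*-u_0\|_m \le C\|\mathscr{F}[u_0]\|_{m-2}$. Your identification of the main technical burden (the approximate right inverse via the cohomological equations on a finite regularity band, and the quadratic control of the omitted terms in Lemma~\ref{lemma_condition4}) matches the paper, so the proposal is correct and essentially identical in structure.
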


\begin{remark}
  More specifically, the restriction for $\left\|
    \mathscr{F}[u_0]\right\|_{m - 2}$ to be sufficiently small is: $$\left\|
    \mathscr{F}[u_0] \right\|_{m - 2} < e^{- 2 \mu \beta},$$ where $\mu, \beta$
  are numbers specified in the proof of Appendix~\ref{appendix}. The converging rate
  for the iteration scheme is bounded by $\left\| \mathscr{F}[u_n] \right\|_{m -
    2} \leq v e^{-2\mu\beta\kappa^n}$, with the same $\mu$ and $\beta$, and $\kappa$
  can be picked to be as close to 2 as possible.
\end{remark}

\begin{remark}
  It may seem somewhat surprising that the requirement on $\left\|
    \mathscr{F}[u_0]\right\|_{\mathscr{X}^{m - 2, \delta}}$ from lower
  regularity can result in the existence of solution $u^*$ in higher regularity
  $\mathscr{X}^{m, \delta}$, but it is actually reasonable because of the
  requirement from even higher regularity that $u_0 \in \mathscr{X}^{m + 2, \delta}$.
\end{remark}

\begin{remark}
  Since $\delta$ prescribes the range of $s$, picking a larger $\delta$
  allows us to parameterize a larger neighborhood of the invariant circle
  provided that the conditions in Theorem~\ref{main} are maintained with
  the increased $\delta$.
\end{remark}

\begin{remark} 
One of the consequences of \eqref{main} is 
that given a family of maps $f_\varepsilon$ indexed by a parameter $\varepsilon$ so 
that $f_0$ contains an invariant circle, we can design a continuation 
method by taking   the exact solution for some value of $\varepsilon$ as an 
approximate solution for $\varepsilon + \eta$ for sufficiently small $\eta$ \cite{YaoL21}. 

This procedure is guaranteed to continue till some of 
the non-degeneracy assumptions of Theorem~\ref{main} fail. 
These assumptions are just the regularity of the circle and some version of 
hyperbolicity. Hence, we know that these numerical methods will continue till 
the torus becomes irregular, the manifolds have a domain of analyticity smaller 
than $\delta$ or the hyperbolicity is lost. This may entail that the dynamical 
average gets close to $1$ (or undefined) or that the angle between the stable 
and unstable manifolds becomes zero (the bundle collapse).  Of course, 
several of the possibilities may happen at the same time.
\end{remark}

\begin{remark}
As seen in several examples (e.g. in \cite{Llave97})
one can see that the optimal regularity of the invariant circle 
may decrease continuously to 0 as the parameters change. For some 
parameter value, they will stop being $C^2$, for another parameter
they will stop being $C^1$, and then, they will become H\"older continuity.
(The isochrons remain analytic, 
even if the optimal domain may change). 

This indicates 
that the breakdown of the tori may depend on what regularity one requires to call something 
a torus. The fact that the destruction of the tori happens in a very
gradual way makes the exploration of the boundary be very subtle since 
the boundary detected depends significantly on the stopping criterion. 
For example, the destruction of the circles as $C^1$ manifolds studied 
in \cite{Mane} happens at different values  of the places 
where they disappear as $C^0$ curves or as continua
\cite{JarnikK69, CapinskiK20}. Detailed discriptions of the breakdown can be
found in \cite{YaoL21}.

Detailed numerical explorations of 
the behavior at breakdown of 
the hyperbolicity \cite{GoldenY88,Rand92a,Rand92b,HaroL06,HaroL07,CallejaF12,FiguerasH12}
has uncovered many interesting phenomena (e.g. scaling relations) 
that deserve detailed mathematical analysis. 

Of course, detailed numerical 
explorations near the boundary are very delicate and it requires having 
a very good theory (condition numbers and a-posteriori theorems) that 
ensure that the calculations are correct even when something unexpected 
is happening.
\end{remark} 

The proof of Theorem~\ref{main}  is done by verifying the
conditions of Theorem~\ref{NMIFT}.
introduced in Appendix~\ref{appendix}. Details of the proof of
Theorem~\ref{main} can be found in
section \ref{proof_proof}.

\subsection{The Analyticity Radius for $W(\theta, s)$:  a Digression}
In general, analytic radius for $W(\theta, s)$ is not infinite (for example, 
systems with more than one limit cycle) since
Algorithm~\ref{algorithm_n} computes the invariant circle and the foliations by
stable manifold in a small neighborhood of the limit cycle. On the other hand,
following \cite{Poincare90},
if the map $f$ is entire, we have the following result:
\begin{lemma}
  If the map $f: \mathbb{T} \times \mathbb{R} \rightarrow \mathbb{T} \times
  \mathbb{R}$ is an entire function, and given $(W(\theta, s), a(\theta),
  \lambda(\theta))$ $\in \mathscr{X}^{r, \delta}$ satisfies the invariance
  equation~\eqref{invariance}, we have
  that the analytic radis for $W(\theta, s)$ in $s$ is infinity.
\end{lemma}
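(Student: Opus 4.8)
The plan is to follow the classical Poincar\'e continuation argument, adapted to the fibered equation \eqref{invariance}. First I would iterate \eqref{invariance} as in \eqref{iterates}, obtaining $f^{\circ n}\circ W(\theta,\cdot)=W\big(a^{\circ n}(\theta),\lambda^{[n]}(\theta)\,\cdot\big)$, and, since $f$ is a diffeomorphism, the equivalent form $W(\theta,s)=(f^{\circ n})^{-1}\big(W(a^{\circ n}(\theta),\lambda^{[n]}(\theta)s)\big)$. Because $\|\lambda\|_{C^0}<1$, for any fixed $s$ and all $n$ large enough one has $|\lambda^{[n]}(\theta)s|\le\|\lambda\|_{C^0}^{\,n}|s|<\delta$, so the right-hand side only evaluates $W(a^{\circ n}(\theta),\cdot)$ \emph{inside} its known disk of analyticity $\{|\,\cdot\,|<\delta\}$ in the $s$-variable; hence $s\mapsto W(a^{\circ n}(\theta),\lambda^{[n]}(\theta)s)$ is holomorphic on the expanding disk $\{|s|<\delta/\|\lambda^{[n]}\|_{C^0}\}$, with values in a fixed small complex neighbourhood of the (compact) invariant circle. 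Composing with $(f^{\circ n})^{-1}$ then yields a holomorphic extension of $W(\theta,\cdot)$ to that disk, these extensions agree with $W(\theta,\cdot)$ near $s=0$ by \eqref{invariance}, hence patch by analytic continuation, and as $n\to\infty$ their common domain exhausts $\mathbb{C}$. This is exactly where the hypothesis that $f$ be entire is used: there is no a priori bound on how far from the circle the iterates push the relevant sets, so one needs the composed maps to be defined without any finite radius restriction.

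To turn this into rigorous estimates I would pass to the Taylor expansion in $s$, $W(\theta,s)=\sum_{j\ge0}W^{(j)}(\theta)s^j$, and match powers of $s$ in \eqref{invariance}: for $j\ge1$ this gives $Df\big(W^{(0)}(\theta)\big)W^{(j)}(\theta)+P_j(\theta)=\lambda(\theta)^jW^{(j)}\big(a(\theta)\big)$, where $P_j$ is the universal polynomial in $W^{(1)},\dots,W^{(j-1)}$ and in the derivatives $D^kf$ of $f$ at $W^{(0)}(\theta)$ --- precisely the coefficient of $s^j$ in $f\big(W^{(0)}(\theta)+\xi(\theta,s)\big)-f\big(W^{(0)}(\theta)\big)-Df\big(W^{(0)}(\theta)\big)\xi(\theta,s)$, with $\xi=\sum_{j\ge1}W^{(j)}s^j$. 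Since $f$ is a diffeomorphism, $Df(W^{(0)}(\theta))$ is invertible, and rearranging puts this in the cohomological form \eqref{coho} for $W^{(j)}$, with $l(\theta)=\lambda(\theta)^j\big(Df(W^{(0)}(\theta))\big)^{-1}$ and right-hand side $-\big(Df(W^{(0)}(\theta))\big)^{-1}P_j(\theta)$. Because $\|\lambda\|_{C^0}<1$, for every $j$ beyond a fixed $J_0$ (independent of $j$) one has $\|l^{[k]}\|_{C^0}\le(C\|\lambda\|_{C^0}^{\,j})^k$ with $C$ independent of $j$, so Lemma~\ref{lemma_coho} (the contraction of Section~\ref{sol_coho_eq}) solves for $W^{(j)}$ and yields $\|W^{(j)}\|_{C^0}\le C'\|P_j\|_{C^0}$ with $C'$ and $J_0$ uniform in $j$; the finitely many $W^{(j)}$ with $j<J_0$ are already controlled because $W\in\mathscr{X}^{m+2,\delta}$.

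The remaining task is to show $\limsup_j\|W^{(j)}\|_{C^0}^{1/j}=0$. For any $\rho$ with $\sigma(\rho):=\sum_{j\ge1}\|W^{(j)}\|_{C^0}\rho^j<\infty$ (known at least for $\rho\le\delta$), the Taylor-remainder bound for the entire map gives $\|\sum_jP_j(\theta)s^j\|\le\tfrac12 N(\sigma(\rho))\sigma(\rho)^2$ for $|s|\le\rho$, where $N(\sigma)=\sup\{\|D^2f(p+\zeta)\|:p\in W^{(0)}(\mathbb{T}),\ |\zeta|\le\sigma\}<\infty$ \emph{precisely because $f$ is entire}; Cauchy estimates then give $\|P_j\|_{C^0}\le\tfrac12 N(\sigma(\rho))\sigma(\rho)^2\rho^{-j}$, whence $\|W^{(j)}\|_{C^0}\le\tfrac12 C'N(\sigma(\rho))\sigma(\rho)^2\rho^{-j}$ for $j\ge J_0$. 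The main obstacle is that this inequality on its own merely reproduces the radius one already has; the strict gain must be extracted by feeding it back through the iteration described above, so that at each stage the Cauchy bounds of the entire maps $f^{\circ n}$ are used with arbitrarily large radius and push $\rho$ past every finite bound, while one keeps $C,C',J_0$ uniform in $n$ and controls the fact that $f$ is only locally injective near the circle --- so that one genuinely recovers $W(\theta,\cdot)$, rather than some other preimage, on the enlarged disk. Once the $C^0$-weighted radius is shown infinite, the same cohomological solution together with the product and composition inequalities of Lemma~\ref{inequal_more_Cr} upgrades this to $\sum_j\|W^{(j)}\|_{C^r}\rho^j<\infty$ for every $\rho>0$, i.e.\ $W(\theta,s)\in\mathcal{X}^{r,\rho}$ for all $\rho$, which is the assertion that the analyticity radius of $W$ in $s$ is infinite.
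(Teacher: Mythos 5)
Your strategy is not the paper's: the paper's own proof is a two-line propagation argument that applies the entire map $f$ directly to \eqref{invariance} along the orbit $\theta, a(\theta), a^{\circ 2}(\theta),\dots$, transferring the known analyticity of $W(\theta,\cdot)$ on a disk of radius $\rho(\theta)\ge\rho_*>0$ to the fibers over the forward images (with the radius rescaled by the factors $\lambda^{[m]}(\theta)$) and letting $m\to\infty$; it never inverts $f$ and never touches Taylor coefficients. Your route instead pulls back long pieces of the fiber over $a^{\circ n}(\theta)$ by $(f^{\circ n})^{-1}$, and this is where the genuine gap lies. From the hypothesis that $f$ is entire you do \emph{not} get holomorphy (or even well-definedness and injectivity) of $(f^{\circ n})^{-1}$ on the complex sets where you need it: $f^{-1}$ extends holomorphically only to some complex neighborhood of the real phase space whose size you do not control, and to make $s\mapsto (f^{\circ n})^{-1}\bigl(W(a^{\circ n}(\theta),\lambda^{[n]}(\theta)s)\bigr)$ holomorphic on $\{|s|<\delta/\|\lambda^{[n]}\|_{C^0}\}$ you must keep every intermediate image $f^{-1},f^{-2},\dots$ of those complex points inside the domain of holomorphy of $f^{-1}$ and on the correct branch. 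Since $f^{-1}$ expands transversally to the circle, these intermediate points do not stay in any fixed complex neighborhood of the circle (the end result $W(\theta,s)$ with $|s|\sim\delta/\|\lambda^{[n]}\|_{C^0}$ is far from it), so holomorphy of the composition, the branch selection (``recovering $W(\theta,\cdot)$ rather than some other preimage''), and uniformity in $n$ are exactly the unproved ingredients — and the inverse of an entire map need not be entire, so they do not come for free.

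The quantitative part does not repair this, as you yourself concede. The coefficient recursion is fine as far as it goes ($W^{(j)}$ solves a cohomological equation of the form \eqref{coho} with $l=\lambda^j\bigl(Df(W^{(0)})\bigr)^{-1}$, and Lemma~\ref{lemma_coho} applies for $j$ large), but combining $\|W^{(j)}\|_{C^0}\le C'\|P_j\|_{C^0}$ with Cauchy estimates for $P_j$ at a radius $\rho$ at which you already have convergence returns only $\|W^{(j)}\|_{C^0}\lesssim\rho^{-j}$, i.e., the radius you started with; the ``feeding back through the iteration'' that is supposed to push $\rho$ past every finite bound is never formulated, and the required uniformities ($C$, $C'$, $J_0$ independent of $n$, injectivity and domains of $f^{\circ n}$) are left as hopes. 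So the proposal establishes no strict increase of the analyticity radius at all, let alone that it is infinite; the missing idea is precisely the mechanism by which the entirety of $f$ (rather than of $f^{-1}$) produces the growth of the domain, which is the content of the paper's forward use of \eqref{invariance}.
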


\begin{proof}
  It follows from $W(\theta, s) \in \mathcal{X}^{r, \delta}$ that $W(\theta,
  \cdot)$ is analytic in $B_{\rho(\theta)}$, where $\rho(\theta) \geq \rho_* >
  0$. Since $f$ in entire, we also have $f \circ W(\theta, \cdot)$, thus $W(a(\theta),
  \lambda(\theta)\cdot)$ by Equation~\eqref{invariance}, is analytic
  in $B_{\rho(\theta)}$. It follows that $W(a(\theta), \cdot)$ is analytic in
  $B_{\lambda^{-1}(\theta)\rho(\theta)}$.

  By repeating the above process, one can see that
  \begin{equation*}
    W(a^{\circ m}(\theta), \cdot) \text{ is analytic in } B_{\lambda^{[m]}(\theta)^{-1}\rho(\theta)}.
  \end{equation*}

  It follows that the analyticity radius for $W(\theta, \cdot)$ is infinity.

\end{proof}

\section{Proof of the Analytical Result} \label{proof}

This section can be mainly divided into 2 parts.
In the first part, we prove two technical results. More specifically, we prove
the fibered version of the Poincar\'e-Sternberg theorem, namely the existence
of $h(\theta, s)$ in equation~\eqref{undetermination} discussed in
Section~\ref{setup_undetermincy} (see Section~\ref{proof_unique}), and we prove
the existence of the solution to the 
cohomological equation mentioned in \eqref{coho} (see Section~\ref{proof_coho}).
In the second half, we present the proof of the Theorem \ref{main}.
The idea of the proof is presented 
in Section~\ref{result}. The proof is achieved by justifying
all the non-degeneracy conditions that are listed in a modified version of the
Nash-Moser implicit function theorem (Theorem~\ref{NMIFT}), which
can be found in Appendix~\ref{appendix}. 

\subsection{The Existence of $h(\theta, s)$ in Equation
  \eqref{undetermination}} \label{proof_unique}

In this subsection, we prove Lemma~\ref{existence_h2}.
As indicated in  Section~\ref{setup_invariance}. Lemma~\ref{existence_h2}
ensures
that the study of \eqref{invariance}, which clearly is 
a sufficient condition for the existence of  foliation, 
is also necessary. This result will not be used in subsequent
studies of the existence of solutions of \eqref{invariance}.
Nevertheless, it introduces some techniques that will
be used later.  It also allows us to make some remarks
about the domains of solutions of functional equations. 

Our goal is finding $h(\theta, s)$ such that equation~\eqref{undetermination} holds for given
$\lambda(\theta, s)$ and $\widehat{\lambda}(\theta, s)$. In the following
Lemma~\ref{existence_h2},
we show the existence of $h(\theta, s)$ when $\widehat{\lambda}(\theta, s)$ equals to the
linear term of $\lambda(\theta, s)$ by the contraction mapping
theorem. 

\begin{lemma}[Existence of $h(\theta, s)$]
  \label{existence_h2}
  There exists $\delta > 0$ such that for $\widehat{\lambda} \in \mathcal{X}^{r,
    \delta}, \widehat{\lambda}(\theta, s) = \lambda(\theta)s + N(\theta, s)$, where
  $N(\theta, s) = \mathcal{O}(s^2)$.
  If there exists $k \in \mathbb{N}^+$ such that $\|\lambda^{[k]}\|_{C^0} < 1$
  and 
  $\|\lambda^{[k]}\|_{C^r} < \gamma_k$ for
  some $k \in \mathbb{N}^+$, where $\gamma_k$ is specified in the proof, then we
  have the existence of $h(\theta, s) \in \mathcal{X}^{r, \delta}$ such that
  equation~\eqref{undetermination}: $h(a(\theta),
\lambda(\theta)s) = \widehat{\lambda}(\theta, h(\theta, s))$ holds.
\end{lemma}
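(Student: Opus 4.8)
The plan is to recast the conjugacy equation \eqref{undetermination} (in the form $h(a(\theta),\lambda(\theta)s)=\widehat{\lambda}(\theta,h(\theta,s))$ used in the lemma) as a fixed-point problem in $\mathcal{X}^{r,\delta}$ for $\delta$ small and solve it by the contraction mapping theorem; this is the classical Poincar\'e--Sternberg argument linearizing a contraction, carried out fiberwise over the base dynamics $a(\theta)$ while respecting the anisotropic regularity. First I would normalize, writing the unknown as $h(\theta,s)=s+\varphi(\theta,s)$ with $\varphi(\theta,s)=\mathcal{O}(s^2)$. Substituting into $h(a(\theta),\lambda(\theta)s)=\widehat{\lambda}(\theta,h(\theta,s))=\lambda(\theta)h(\theta,s)+N(\theta,h(\theta,s))$, the terms linear in $s$ cancel and one is left with
\begin{equation*}
 \varphi(a(\theta),\lambda(\theta)s) - \lambda(\theta)\varphi(\theta,s) \;=\; N\bigl(\theta,\, s + \varphi(\theta,s)\bigr).
\end{equation*}
Denoting the left-hand side by $(\mathcal{L}\varphi)(\theta,s)$ and the right-hand side by $(\mathcal{N}\varphi)(\theta,s)$, the equation reads $\mathcal{L}\varphi=\mathcal{N}\varphi$, where $\mathcal{N}$ maps $\mathcal{O}(s^2)$-functions to $\mathcal{O}(s^2)$-functions and, since $N=\mathcal{O}(s^2)$ and $s$ is confined to the disk of radius $\delta$, is both small (of size $\mathcal{O}(\delta)$ on a fixed ball) and Lipschitz with constant $\mathcal{O}(\delta)$; crucially, only Lipschitz continuity of the composition operator is needed here, not its differentiability, so the pathologies of composition in $C^r$ recalled in Section~\ref{space} do not intervene.

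Next I would invert $\mathcal{L}$ on $\{\varphi\in\mathcal{X}^{r,\delta}:\varphi=\mathcal{O}(s^2)\}$. Expanding in powers of $s$, the coefficient of $s^{j}$ ($j\ge 2$) in $\mathcal{L}\varphi=g$ reads $\lambda(\theta)^{j}\varphi^{(j)}(a(\theta))-\lambda(\theta)\varphi^{(j)}(\theta)=g^{(j)}(\theta)$; dividing by $-\lambda(\theta)$ — using, as in Section~\ref{algorithm}, that $\lambda$, hence every $\lambda^{[k]}$, is nowhere zero — this is exactly a cohomological equation of the form \eqref{coho} with multiplier $l=\lambda^{j-1}$, solved by the series \eqref{coho_solution}. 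Since $j-1\ge 1$ and $(\lambda^{j-1})^{[i]}=(\lambda^{[i]})^{j-1}$, the hypothesis $\|\lambda^{[k]}\|_{C^0}<1$ together with the cocycle bound \eqref{cocyle} gives geometric decay of the $C^0$ norms uniformly in $j$; convergence in $C^r$ follows from the composition inequalities of Lemma~\ref{inequal_Halpha} and Lemma~\ref{inequal_more_Cr}, the geometric ratio for the $s^{j}$ coefficient being controlled by $\|(\lambda^{j-1})^{[k]}\|_{C^0}\|D(a^{\circ k})\|_{C^0}^{r}$, which is largest at $j=2$ and is forced $<1$ by the threshold $\gamma_k$ on $\|\lambda^{[k]}\|_{C^r}$ (a condition of the same flavor as \eqref{finite_reg}). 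Reassembling the coefficients — and noting that inverting $\mathcal{L}$ costs a loss of a factor per power of $s$, so that $\widehat{\lambda}$ (hence $N$) must be taken analytic on a disk slightly larger than $\delta$ — yields a bounded linear operator $\mathcal{L}^{-1}$ on the $\mathcal{O}(s^2)$-functions in $\mathcal{X}^{r,\delta}$.

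Finally I would run the contraction: by the estimates above and the algebra property of $\mathcal{X}^{r,\delta}$ (Lemma~\ref{lemma_inequalityXr}), the map $T\varphi:=\mathcal{L}^{-1}(\mathcal{N}\varphi)$ sends a small closed ball of $\{\varphi\in\mathcal{X}^{r,\delta}:\varphi=\mathcal{O}(s^2)\}$ into itself and is a contraction once $\delta$ is small enough; its fixed point $\varphi_*$ gives $h=s+\varphi_*\in\mathcal{X}^{r,\delta}$ solving \eqref{undetermination}, and since $h(\theta,0)=0$ and $\partial_s h(\theta,0)=1$, the map $h(\theta,\cdot)$ is a genuine local reparameterization, as needed for the use made of it in Section~\ref{setup_undetermincy}. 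The step I expect to be the main obstacle is the uniform-in-$j$ control of $\mathcal{L}^{-1}$ in the weighted norm $\|\cdot\|_{\mathcal{X}^{r,\delta}}=\sum_j\|\cdot^{(j)}\|_{C^r}\delta^{j}$: one must check that the $C^r$ estimates for the cohomological series of each coefficient neither degrade as $j\to\infty$ (this is where $\|\lambda^{[k]}\|_{C^0}<1$ is used, via $(\lambda^{[k]})^{j-1}$) nor blow up at $j=2$ (this is where $\gamma_k$ is pinned down), and that the polynomial-in-$i$ prefactors coming from the composition estimates do not spoil summability; everything else is a routine contraction-mapping argument once $\delta$ has been chosen small.
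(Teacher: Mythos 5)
Your proposal is correct in outline but takes a genuinely different route from the paper. The paper never inverts the linearized conjugacy operator: after the same normalization $h=s+\widehat h$, $\widehat h=\mathcal{O}(s^2)$, it rewrites the equation directly as the fixed-point problem \eqref{contraction_0}, $\widehat h=\mathscr{G}[\widehat h]$ with $\mathscr{G}[\widehat h]=\lambda^{-1}(\theta)\bigl[\widehat h(a(\theta),\lambda(\theta)s)-N(\theta,s+\widehat h(\theta,s))\bigr]$ on the closed subspace $\widetilde{\mathcal{X}}^{r,\delta}$ of $\mathcal{O}(s^2)$ functions, and shows that the $k$-fold iterate $\mathscr{G}^{\circ k}$ is a contraction (see \eqref{G_m}): since $\widehat h=\mathcal{O}(s^2)$, the composition with $\lambda^{[k]}(\theta)s$ produces $(\lambda^{[k]})^{j}$ with $j\ge 2$ against the single factor $(\lambda^{-1})^{[k]}$, and the hypothesis $\|\lambda^{[k]}\|_{C^r}<\gamma_k$ directly yields the contraction constant $\zeta<1$; the accumulated $N$-terms are only controlled by $\delta$ small, and no cohomological equation is ever solved. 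You instead invert the linear operator $\mathcal{L}$ coefficientwise via Lemma~\ref{lemma_coho} (multiplier $\lambda^{j-1}$, identity $(\lambda^{j-1})^{[i]}=(\lambda^{[i]})^{j-1}$) and then contract $T=\mathcal{L}^{-1}\mathcal{N}$ using the $\mathcal{O}(\delta)$ Lipschitz constant of the nonlinearity — a Newton-like structure closer in spirit to Section~\ref{algorithm}. What each buys: the paper's argument is self-contained, needs no uniform-in-$j$ operator bound, and places the smallness on $\lambda^{[k]}$ rather than only on $\delta$; yours produces an explicit bounded right inverse of the linearized conjugacy, at the cost of exactly the uniform-in-$j$ estimate you flag as the main obstacle. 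Two cautions on that crux: (i) your fallback of taking $N$ analytic on a slightly larger disk cannot be used inside the iteration — each application of $\mathcal{L}^{-1}$ would then lose domain, and $\mathcal{N}\varphi$ is only defined on the disk where $\varphi$ lives — so you genuinely need $\mathcal{L}^{-1}$ bounded from $\mathcal{X}^{r,\delta}$ to itself; (ii) this boundedness does hold, but not by quoting the constant $C_{l,a,r}$ of Lemma~\ref{lemma_coho} naively (its proof's prefactor $\|l\|_{C^0}^{-n}$ blows up exponentially in $j$ when $l=\lambda^{j-1}$): one should estimate the series \eqref{coho_solution} directly, noting that the $i=0$ term is $j$-independent while every $i\ge1$ term carries $\|\lambda\|_{C^0}^{i(j-1)}$-type decay that crushes the polynomial-in-$j$ factors coming from differentiating the $(j-1)$-st power, with worst case $j=2$, exactly where your $\gamma_k$ is pinned down.
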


\begin{remark}
  The condition $\|\lambda^{[k]}\|_{C^0} < 1$ for some $k \in \mathbb{N}^+$ can
  be assured when the dynamical average $\lambda^* < 1$, and the condition
  $\|\lambda^{[k]}\|_{C^r}$ can be maintained with a suitable choice of initial
  condition $u_0$ as in Theorem~\ref{main}.
\end{remark}

\begin{remark}
  This Lemma~\ref{existence_h2} can be viewed as a \emph{``fibered''} version of the Poincar\'e-Sternberg theorem
on linearization of contractions.  We can think of $s$ as the dynamic
variable but the map sends a fiber indexed by $\theta$ into another fiber
indexed by $a(\theta)$. 

We have prepared a proof following the version of \cite{Sternberg57}
based on formulating as  contractions since it leads to concrete estimates. 
Since the maps are analytic in the dynamical variable, 
the original proof of Poincar\'e-Dulac 
\cite{Poincare78, Dulac04}  based on majorants can also be adapted. 
\end{remark}

\begin{proof}
By substituting the above $\widehat{\lambda}(\theta, s)$ and $\lambda(\theta)$ in equation
\eqref{undetermination}, we have 
\begin{equation} \label{existence_equation}
  h(a(\theta), \lambda(\theta)s) = \lambda(\theta)h(\theta, s) + N(\theta, h(\theta, s)).
\end{equation}

Since we only need the existence of $h$, we restrict ourselves for finding
$h(\theta, s)$ of the following form:
\begin{equation} \label{expression_h}
  h(\theta, s) = s + \widehat{h}(\theta, s),
\end{equation}
where $ \widehat{h}(\theta, s) = \mathcal{O}(s^2)$. 
By substituting \eqref{expression_h} back into equation~\eqref{existence_equation} and
after some simplifications, we have
\begin{equation} \label{contraction_0}
  \widehat{h}(\theta, s) = \lambda^{-1}(\theta) [\widehat{h}(a(\theta), \lambda(\theta)s) - N(\theta, s + \widehat{h}(\theta, s))].
\end{equation}

Define a Banach space
$$\widetilde{\mathcal{X}}^{r, \delta} = \Big\{ u(\theta, s) = \sum_{j =
  2}^{\infty}u^{(j)}(\theta)s^j \mid u^{(j)}(\theta) \in C^r, \text{ and }
\sum_{j = 2}^{\infty}\left\|u^{(j)}\right\|_{C^r}\delta^{j} < \infty
\Big\}.$$
We know $\widetilde{\mathcal{X}}^{r,\delta}$ is complete as it is a closed subspace of
$\mathcal{X}^{r, \delta}$, and $N(\theta, s), \widetilde{h}(\theta, s) \in
\widetilde{\mathcal{X}}^{r, \delta}$.

Denote $$\mathscr{G}[\widehat{h}] = \lambda^{-1}(\theta) [\widehat{h}(a(\theta),
\lambda(\theta)s) - N(\theta, s + \widehat{h}(\theta, s))],$$ then $\mathscr{G}:
\widetilde{\mathcal{X}}^{r, \delta} \rightarrow
\widetilde{\mathcal{X}}^{r, \delta}$. The task now is to show the existence of $\widehat{h}$ such
that $\mathscr{G}(\widehat{h}) = \widehat{h}$ through a contraction argument.

Instead of showing that $\mathscr{G}$ is a contraction, we show
$\mathscr{G}^{\circ k}$($\mathscr{G}$ compose with itself $k$ times
for some big enough integer $k$) is a contraction.

By simple calculations, one can see that
\begin{equation} \label{G_m}
    \mathscr{G}^{\circ k}[\widehat{h}] = (\lambda^{-1})^{[k]}(\theta) [\widehat{h}(a^{\circ k}(\theta),
    \lambda^{[k]}(\theta)s) - k \mathcal{O}{(s^2)}],
  \end{equation}
where the second term $k \mathscr{O}(s^2)$ is formed by the summation of $n$
terms of $N(\cdot,
\cdot)$, each is of $\mathcal{O}{(s^2)}$, which can be controlled to be small by
some upper bound $\delta_o$ since $|s| < \delta < \delta_0$.

It remains to show that the first term of \eqref{G_m}:
$(\lambda^{-1})^{[k]}(\theta) \widehat{h}(a(\theta), \lambda^{[k]}(\theta)s)
\triangleq \mathscr{L}[\widehat{h}]$ is a contraction. 
For every $\hat{h}_1, \hat{h}_2 \in \widetilde{\mathcal{X}}^r$, we have
$\hat{h}_1(\theta, s) = \sum_{j = 2}^{\infty}\hat{h}_1^{(j)}(\theta)s^j$ and
$\hat{h}_2(\theta, s) = \sum_{j = 2}^{\infty}\hat{h}_2^{(j)}(\theta)s^j$. We have
\begin{align*}
  \left\|\mathscr{L}[\widehat{h}_1] - \mathscr{L}[\widehat{h}_2]\right\|_{\mathcal{X}^{r, \delta}} &= \left\|(\lambda^{-1})^{[k]}(\theta)(\hat{h}_1(a^{\circ k}(\theta), \lambda^{[k]}(\theta)s) - \hat{h}_2(a^{\circ k}(\theta), \lambda^{[k]}(\theta)s))) \right\|_{\mathcal{X}^{r, \delta}}\\
                                                                           &\leq \left\| \sum_{j = 2}^{\infty}(\hat{h}_1^{(j)} - \hat{h}_2^{(j)})(a^{\circ k}(\theta)) \lambda^{[k - 1](j - 1)}(\theta) s^j \right\|_{\mathcal{X}^{r, \delta}} \\
                                                                                                   &\leq C_{r, k, \|a\|_{C^r}, \|\lambda\|_{C^0}}  \left\| \lambda^{[k]} \right\|_{C^r}^r \sum_{j = 2}^{\infty} \left\| \hat{h}_1^{(j)} - \hat{h}_2^{(j)} \right\|_{\mathcal{X}^{r, \delta}}  \delta^{j}\\
                                                                                                   &\leq \zeta \left\| \hat{h}_1 - \hat{h}_2\right\|_{\mathcal{X}^{r, \delta}}
 \end{align*}

provided that $\|\lambda^{[k]}\|_{C^r} < (\zeta C_{r, k, \|a\|_{C^r}, \|\lambda\|_{C^0}}^{-1})^{\frac{1}{r}} \triangleq \gamma_k$ for any $0 < \zeta <
1$,  where the second last
inequality is achieved by utilizing Lemma~\ref{inequal_more_Cr} and $C_{r, k,
  \|a\|_{C^r}, \|\lambda\|_{C^0}}^{-1} > 0$  is a constant related to $r, k$,
$\|a\|_{C^r}$  and $\|\lambda\|_{C^0}$ only.

By the above discussion, we have the existence of a unique $\hat{h}^* \in \tilde{X}^r$
such that $\mathscr{G}(\hat{h}^*) = \hat{h}^*$, which finishes the proof.

\end{proof}

\subsection{Estimates on Solutions of the Cohomological Equation~\eqref{coho}} \label{proof_coho}

We use this subsection to take a closer look at the cohomological equation
mentioned in \eqref{coho} with solution \eqref{coho_solution}. The following result in
Lemma~\ref{lemma_coho} is used in both 
Section~\ref{algorithm_derivation} and Section~\ref{proof}. 
\begin{lemma}
  \label{lemma_coho}
  Given $l(\theta), a(\theta)$ and
  $\eta(\theta) \in C^r$ with $\|l\|_{C^0} < 1$. If $r < -
  \ln\|l\|_{C^0} / \ln\|Da\|_{C^0}$ (i.e. $\left\|Da
  \right\|_{C^0}^r\left\| l\right\|_{C^0} < 1$), then the cohomological equation
  \eqref{coho}: $\phi(\theta) = l(\theta)\phi(a(\theta)) + \eta(\theta)$
  admits a unique $C^r$ solution:
  \begin{equation} \label{coho_sol}
    \phi(\theta) = \sum_{j = 0}^{\infty}l^{[j]}(\theta)\eta(a^j(\theta))
  \end{equation}
  with
  $$ \left\| \phi \right\|_{C^r} \leq C_{l, a, r} \left\| \eta\right\|_{C^r}
  \leq \infty,$$
\end{lemma}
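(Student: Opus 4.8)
The plan is to take the formal series \eqref{coho_solution} as the candidate solution, show it converges absolutely in $C^r$ under the stated hypothesis, and then check that it solves \eqref{coho} and is the unique such solution.

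The first and main step is the term-by-term estimate. Each summand $l^{[j]}(\cdot)\,\eta(a^{\circ j}(\cdot))$ is exactly the object bounded in Lemma~\ref{inequal_more_Cr}(6) (taking there $\phi \to \eta$, $\psi \to l$, $k \to j$, which is legitimate since $\|l\|_{C^0}<1$):
\[
\big\| l^{[j]} \cdot (\eta \circ a^{\circ j}) \big\|_{C^r}
\le C_r\, \|\eta\|_{C^r}\, \big(\|l\|_{C^r} + \|a\|_{C^r}\big)^{r+1}\, \|l\|_{C^0}^{-n}\, j^r\, \big(\|l\|_{C^0}\|Da\|_{C^0}^r\big)^j.
\]
The structural point is that composing with $a^{\circ j}$ costs a factor $\|Da\|_{C^0}^{\,rj}$ in the top Hölder seminorm, while the cocycle $l^{[j]}$ contributes $\|l\|_{C^0}^{\,j}$, and their product is geometrically small precisely when $\alpha := \|Da\|_{C^0}^r\|l\|_{C^0} < 1$, which is the hypothesis $r < -\ln\|l\|_{C^0}/\ln\|Da\|_{C^0}$. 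Since $\sum_{j\ge 0} j^r\alpha^j < \infty$ for $\alpha<1$, the series defining $\phi$ converges absolutely in the Banach space $C^r(\mathbb{T})$; summing the bound gives $\|\phi\|_{C^r} \le C_{l,a,r}\|\eta\|_{C^r}$, with $C_{l,a,r}$ depending only on $r$, $\|l\|_{C^r}$, $\|a\|_{C^r}$ and, through $\alpha$, on $\|l\|_{C^0},\|Da\|_{C^0}$.

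With convergence in hand, I would verify the equation by a rearrangement justified by absolute convergence: using the cocycle identity $l^{[j+1]}(\theta)=l(\theta)\,l^{[j]}(a(\theta))$ and $a^{\circ(j+1)}(\theta)=a^{\circ j}(a(\theta))$,
\[
\phi(\theta) = \eta(\theta) + l(\theta)\sum_{j\ge 0} l^{[j]}(a(\theta))\,\eta\big(a^{\circ j}(a(\theta))\big) = \eta(\theta) + l(\theta)\,\phi(a(\theta)).
\]
For uniqueness, if $\phi_1,\phi_2$ are any two $C^0$ solutions then $\psi:=\phi_1-\phi_2$ satisfies $\psi(\theta)=l(\theta)\psi(a(\theta))$, hence by iteration $\psi(\theta)=l^{[n]}(\theta)\,\psi(a^{\circ n}(\theta))$; since $\|l^{[n]}\|_{C^0}\le\|l\|_{C^0}^n\to 0$, we get $\psi\equiv 0$, so the solution is unique even among merely continuous functions.

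The only real obstacle is the $C^r$ estimate of the generic term, but this is exactly Lemma~\ref{inequal_more_Cr}(6), whose Fa\`a-di-Bruno bookkeeping (counting occurrences of $Da\circ a^{\circ q}$, $D^p a\circ a^{\circ q}$ and of $l$ in each term of the expansion of $D^n[\,l^{[j]}\cdot(\eta\circ a^{\circ j})\,]$) has already been carried out in the excerpt, so here it is just an application. The remaining care is to keep $r$ non-integer so that the $C^r$ norm and seminorm estimates behave as stated; for integer $r$ one simply makes $\|Da\|_{C^0}^r\|l\|_{C^0}<1$ strict, which is already assumed. I would also note, following the remark after \eqref{finite_reg}, that running the same argument with $l^{[k]}$ and $a^{\circ k}$ in place of $l$ and $a$ yields the sharper sufficient condition $\|l^{[k]}\|_{C^0}\|D(a^{\circ k})\|_{C^0}^r<1$ (indeed even $\|l^{[k]}D(a^{\circ k})\|_{C^0}<1$), and that the examples referenced there show this range of $r$ can genuinely be finite, which is what forces the Nash--Moser treatment in Appendix~\ref{appendix}.
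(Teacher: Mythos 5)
Your proposal is correct and follows essentially the same route as the paper's proof: the $C^r$ bound on each summand comes from Lemma~\ref{inequal_more_Cr}(6), convergence follows from $\sum_j j^r(\|l\|_{C^0}\|Da\|_{C^0}^r)^j<\infty$, the equation is verified by rearranging the absolutely convergent series via the cocycle identity, and uniqueness among $C^0$ solutions follows from iterating \eqref{coho} and letting $\|l^{[n]}\|_{C^0}\to 0$, exactly as in the paper's argument based on \eqref{coho_steps}.
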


\begin{proof}
  First, we prove that \eqref{coho_sol} is a solution to equation~\eqref{coho}.
  Since $\|l\|_{C^0} < 1$ and $\|\eta\|_{C^0}$ is bounded,
  by noticing that $\sum_{j = 0}^{\infty}l^{[j]}(\theta)\eta(a^j(\theta))$
  converges uniformly in $C^0$, 
  one can substitute this infinite sum back in \eqref{coho} and rearrange terms to show
  that \eqref{coho_sol} is indeed a solution.

  On top of this, we argue that \eqref{coho_sol} is the only $C^0$ solution.
  More explictly, if there were two solutions, then by the discussion in
  \eqref{coho_steps}: $\phi(\theta) = \sum_{j =
    0}^{n}l^{[j]}(\theta)\eta(a^j(\theta)) + l^{[n + 1]}(\theta)\phi(a^{n +
    1}(\theta))$, they would agree on the first $n$ terms, and since the limit
  of the $C^0$ norm for the last term goes to $0$ as $n$ goes to infinity, the
  two solutions are the same.

  To finish the proof, we now show that $\|\phi\|_{C^r} < \infty$. 
  By Lemma~\ref{inequal_Halpha} and Lemma~\ref{inequal_more_Cr}, we have the
  following inequalities,
  \begin{enumerate}
  \item $\|a^{\circ k}\|_{C^r} \leq k^n n! \|Da\|_{C^0}^{r(k
      -1)}\|a\|_{C^r}^{r+1},$
  \item $\|\eta(a^{\circ k})\|_{C^r} \leq n!
    k^{n-1}(n+nk+1)\|\eta\|_{C^r}\|a\|_{C^r}^{r+1}\|Da\|_{C^0}^{kr},$
  \item $\|l^{[k]}\|_{C^r} \leq k^{n+1}(n+1)!(\|l\|_{C^r} +
    \|a\|_{C^r})^{r+1}\|l\|_{C^0}^{max(0, k-n-1)}\|Da\|_{C^0}^{kr},$
  \item $\|\eta (a^{\circ k}) l^{[k]}\|_{C^r} \leq C_r(\|l\|_{C^r} +
    \|a\|_{C^r})^{r+1} \|l\|_{C^0}^{-n}\big[k^n(\|l\|_{C^0}\|Da\|_{C^0}^{r})^k\big] \|\eta \|_{C^r}$
  \end{enumerate}
  Thus from \eqref{coho_solution}, we have
  \begin{equation*}
    \sum_{j = 0}^{\infty} \left\|
      l^{[j]}\eta(a^{\circ j}) \right\|_{C^r} 
    \leq C_r (\left\|l \right\|_{C^r} + \left\|
      a\right\|_{C^r})^{r+1}\|l\|_{C^r}^{-n} (\sum_{j=1}^{\infty}j^{n}(\left\|Da \right\|_{C^0}^r\left\| l\right\|_{C^0})^j) \left\| \eta \right\|_{C^r},
  \end{equation*}
  thus if $r < - \ln \|l\|_{C^0} / \ln \|Da\|_{C^0}$, we have $\left\|l
    \right\|_{C^0}\left\|Da \right\|_{C^0}^{r} < 1$. 
    It follows that
    $$\|\phi\|_{C^r} \leq \sum_{j=0}^{\infty} \|l^{[j]}\eta(a^{\circ
      j})\|_{C^r} < \infty, $$
  which finishes the proof.
\end{proof}

\begin{remark} \label{general_coho}
  Give $k \in \mathbb{N}^+$, by rewriting equation~\eqref{coho} into the form as
  in \eqref{coho_steps}, i.e.,
  $$ \phi(\theta) = l^{[k + 1]}(\theta)\phi(a^{\circ (k+1)}(\theta)) + \sum_{j = 0}^{k}l^{[j]}(\theta)\eta(a^{\circ j}(\theta)),$$
  The requirement for $r$ can be generalized slightly to be $r < - \ln \|l^{[k]}\|_{C^0}
  / \ln \|D(a^{\circ (k+1)})\|_{C^0}$, we have $\left\|l^{[k]}
    \right\|_{C^0}\left\|D(a^{\circ (k+1)}) \right\|_{C^0}^{r} < 1$,
\end{remark}
\begin{remark}
  Lemma~\ref{lemma_coho} shows that if
  \begin{equation}
    \label{finite_reg_2}
    \left\|l \right\|_{C^0} \left\|Da \right\|_{C^0}^{r} < 1 \text{ (or } \left\|l^{[k]}
    \right\|_{C^0}\left\|D(a^{\circ (k+1)}) \right\|_{C^0}^{r} < 1,)
  \end{equation}
  then we have that $\phi(\theta) = \sum_{j =
    0}^{\infty}l^{[j]}(\theta)\eta(a^j(\theta))$ converges absolutely in the
  $C^r$ sense, thus $\phi \in C^r$.

  Note that the condition \eqref{finite_reg_2} can only be satisfied for a finite range
  of regularity $r$. We now give examples to show that this
  condition is sharp. 

If $a(\theta)$ has an attractive fixed point, which we place
at $\theta = 0$.
If $a(\theta) = \lambda \theta$ in a neigborhood and, moreover $l(\theta)$ is 
a constant, we see that \eqref{coho_solution} becomes
\begin{equation}\label{weierstrass} 
  \phi(\theta) = \sum_{j = 0}^{\infty}l^j\eta(\lambda^j \theta))
\end{equation}

which is a version of  the classical Weierstrass function, which for 
even polynomial $\eta$ can be arranged to be  finite differentiable, 
showing that the range claimed in Lemma~\ref{lemma_coho} is optimal
in the generality claimed. 
Indeed the map that in local coordinates has the expression 
$(x,y)  = \lambda x, l y + \eta(x) $ has an invariant circle 
given by the graph of the function $\phi$ in \eqref{weierstrass}. 

The fact that one can only solve the cohomology equations for
a certain range of regularities makes it impossible to use 
the Nash-Moser methods that are based on approximating 
solutions of $C^\infty$ or $C^\omega$ problems. 
\end{remark}

\subsection{Proof for Theorem~\ref{main}}
\label{proof_proof}

Following the same notation as in Theorem \ref{main}, we now justify the
non-degeneracy conditions of the abstract Nash-Moser Theorem \ref{NMIFT} one by
one.

\begin{lemma}[Condition 1]
  \label{lemma_condition1}
  For $\delta, \widetilde{B}_r(\rho)$ defined in Theorem~\ref{main}, we have
  $\mathscr{F} (\widetilde{B}_r(\rho) \cap \mathscr{X}^{r, \delta}) \subset
  \mathscr{Y}^{r, \delta}$. 
\end{lemma}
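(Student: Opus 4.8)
The plan is to use that $\mathscr{Y}^{r,\delta}=\mathcal{X}^{r,\delta}\times\mathcal{X}^{r,\delta}$ is a vector space and that, by \eqref{functional}, $\mathscr{F}[u]=f\circ W-W(a(\cdot),\lambda(\cdot)\cdot)$; hence it suffices to show separately that the two terms $f\circ W$ and $(\theta,s)\mapsto W(a(\theta),\lambda(\theta)s)$, read componentwise (using the standard lift for the $\mathbb{T}$-component), each lie in $\mathcal{X}^{r,\delta}\times\mathcal{X}^{r,\delta}$, with bounds uniform over $u=(W,a,\lambda)\in\widetilde{B}_r(\rho)\cap\mathscr{X}^{r,\delta}$. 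Throughout I would rely on the composition and product inequalities of Lemmas~\ref{inequal_Halpha}, \ref{inequal_more_Cr} and \ref{lemma_inequalityXr}, and on the picture of $\mathcal{X}^{r,\delta}$ as $C^r(\mathbb{T},A_\delta)$ from Remark~\ref{several_spaces}, where $A_\delta$ is the Banach algebra of power series $\sum_j c_j s^j$ with $\sum_j|c_j|\delta^j<\infty$.

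First I would shrink $\rho$ (and, if needed, $\delta$) once and for all so that both compositions are geometrically well posed. By Condition-0 (Definition~\ref{condition0}) one has $\|\lambda_0\|_{C^0}<1$, so $\|\lambda\|_{C^0}\le\|\lambda_0\|_{C^0}+\rho<1$ on the ball; consequently $|\lambda(\theta)s|\le\|\lambda\|_{C^0}\delta<\delta$ for $|s|\le\delta$, so $(a(\theta),\lambda(\theta)s)$ stays inside the domain $\mathbb{T}\times[-\delta,\delta]$ of $W$. Since $f$ is analytic it extends holomorphically to a complex neighborhood of the compact image of $W_0$; because $\|W-W_0\|_{\mathcal{X}^{r,\delta}}\le\rho$, for $\delta,\rho$ small the complexified section $s\mapsto W(\theta,s)$ maps $|s|<\delta$ into that neighborhood uniformly in $\theta$, so $f\circ W$ is defined and analytic in $s$ on $|s|<\delta$.

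For the second term, expanding $W(\theta,s)=\sum_{j\ge0}W^{(j)}(\theta)s^j$ gives
\begin{equation*}
W(a(\theta),\lambda(\theta)s)=\sum_{j\ge0}\bigl(W^{(j)}\circ a\bigr)(\theta)\,\lambda(\theta)^{j}\,s^{j},
\end{equation*}
whose $j$-th coefficient lies in $C^r$ and, by Lemma~\ref{inequal_Halpha}(3)--(4) and Lemma~\ref{inequal_more_Cr}(7), satisfies
\begin{equation*}
\bigl\|(W^{(j)}\circ a)\,\lambda^{j}\bigr\|_{C^r}\le C_r\,\|a\|_{C^r}^{r}\,j^{2(n-1)}\,\|\lambda\|_{C^r}^{\min(j,r)}\,\|\lambda\|_{C^0}^{\max(j-n-1,0)}\,\|W^{(j)}\|_{C^r},
\end{equation*}
with $n=\floor{r}$. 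Multiplying by $\delta^{j}$ and summing, the factor $j^{2(n-1)}\|\lambda\|_{C^0}^{\max(j-n-1,0)}$ is bounded uniformly in $j$ because $\|\lambda\|_{C^0}<1$, and $\|\lambda\|_{C^r}^{\min(j,r)}\le\max(1,\|\lambda\|_{C^r}^{r})$, so the series is dominated by a constant multiple of $\sum_{j\ge0}\|W^{(j)}\|_{C^r}\delta^{j}=\|W\|_{\mathcal{X}^{r,\delta}}<\infty$; hence this term lies in $\mathcal{X}^{r,\delta}$ componentwise, with norm controlled by quantities bounded on $\widetilde{B}_r(\rho)$. For the first term, $f\circ W$, I would use the Banach-algebra structure of $A_\delta$: writing $f$ near each point of the image of $W_0$ as a convergent power series, the smallness of $\|W(\theta,\cdot)-W^{(0)}(\theta)\|_{A_\delta}$ for $\delta$ small permits substituting $W(\theta,\cdot)$ into that series to get a convergent element of $A_\delta$, uniformly in $\theta$; the $C^r$ dependence on $\theta$ then follows from Lemma~\ref{inequal_Halpha}(4) (composition of the $C^r$ map $W$ with the analytic, hence $C^r$, map $f$) together with Lemma~\ref{lemma_inequalityXr} to control the products arising in the Fa\`a~di~Bruno expansion. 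Subtracting the two terms yields $\mathscr{F}[u]\in\mathscr{Y}^{r,\delta}$.

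I expect the genuine obstacle to be the first term: showing that composing the analytic $f$ with a function that is only $C^r$ in $\theta$ but analytic in $s$ returns to the \emph{same} space $\mathcal{X}^{r,\delta}$, and in particular without shrinking the analyticity radius $\delta$. This is precisely the delicate behaviour of the composition operator in $C^r$-type spaces emphasized in the introduction; here the Banach-algebra property of $A_\delta$ rescues the analytic $s$-direction while the $C^r$ composition inequalities handle the finitely differentiable $\theta$-direction, at the price of having to fix $\delta$ and $\rho$ small enough that $W(\mathbb{T}\times[-\delta,\delta])$ stays inside the domain of holomorphy of $f$.
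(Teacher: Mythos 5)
Your proposal is correct and follows essentially the same route as the paper's proof: split $\mathscr{F}[u]$ into the two terms, treat $W(a(\theta),\lambda(\theta)s)$ coefficientwise with the $C^r$ product/composition inequalities and the geometric decay coming from $\|\lambda\|_{C^0}<1$, and treat $f\circ W$ by expanding the analytic $f$ around the $s=0$ slice of the parameterization and substituting the power series in $s$, using the algebra property of the analytic direction (the paper implements this via Cauchy estimates on the coefficients $f_1^{(j)}$ centered at $W_1^{(0)}(\theta)$ with radius $R$, while you center at the image of $W_0$ -- the same idea). The only differences are cosmetic choices of where to center the expansion and which lemma items to cite, with the same smallness requirements on $\rho$ and $\delta$.
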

\begin{proof}
  For every $u(\theta, s) = (W(\theta, s), a(\theta), \lambda(\theta)) \in
  \widetilde{B}_r(\rho) \cap \mathscr{X}^{r, \delta}$, recall
  $\widetilde{B}_r(\rho)$ is a ball with radius
  $\rho$, we have
  $$ \mathscr{F}[u](\theta, s) = f \circ W(\theta, s) - W(a(\theta),
  \lambda(\theta)s).$$
  
  First, we show that $f \circ W(\theta, s) \in \mathscr{Y}^r$. 
  With no loss of generality, we will only consider the
  first component of $f = (f_1, f_2)$ and show that
  $\left\| f_1(W_1, W_2) \right\|_{\mathscr{X}^{r, \delta}} < \infty$.

  Write $$f_1(\theta, s) = \sum_{j = 0}^{\infty} f_1^{(j)}(\theta) s^j, ~
  W_1(\theta, s) = \sum_{j = 0}^{\infty} W_1^{(j)}(\theta) s^j, ~W_2(\theta, s) =
  \sum_{j = 0}^{\infty} W_2^{(j)}(\theta) s^j.$$
  Notice that
  \begin{align*}
    f_1^{(j)}(W_1(\theta, s)) &= f_1^{(j)}(W_1^{(0)}(\theta)) +
  \Big(\frac{d}{d \theta}f_1^{(j)}\Big)(W_1^{(0)}(\theta))\Big(\sum_{j = 1}^{\infty}W_1^{(j)}(\theta)s^j\Big) + \ldots \\ &+
  \frac{1}{k!}\Big(\frac{d^k}{d \theta}f_1^{(j)}\Big)(W_1^{(0)}(\theta))\Big(\sum_{j = 1}^{\infty}W_1^{(j)}(\theta)s^j\Big)^k + \ldots
  \end{align*}
  since $f^{(j)}(\theta)$ is analytic, we can treat it as a function in
  $\mathbb{C}$, and then by Cauchy's estimates for derivatives, we have
  $$
  \left| \frac{d^k}{d \theta}(f_1^{(j)})(W_1^{(0)}(\theta)) \right| \leq
  \frac{k!}{R^k} \max_{z \in \gamma_R} \left| f_1^{(j)}(z) \right| =
  \frac{k!}{R^k}C_R, \forall R > 0.
  $$
  where $\gamma_R = \{z \ | \ | z - W_1^{(0)}(\theta) | = R\}$.
  It follows that
  \begin{align} 
    \left\| f^{(j)}(W_1) \right\|_{\mathscr{X}^{r, \delta}} &\leq C_R(1 + R^{-1}\left\| W_1\right\|_{\mathscr{X}^{r, \delta}} + R^{-2}\left\| W_1\right\|_{\mathscr{X}^{r, \delta}}^2 + \ldots) \nonumber\\
    &\leq C_R\left(\frac{1}{1 - \frac{\left\| W_1\right\|_{\mathscr{X}^{r, \delta}}}{R}}\right) \leq  C_R \frac{1}{1 - \frac{\rho}{R}} \label{fbound}
  \end{align}
  Thus
  \begin{align*}
    \left\| f_1(W_1, W_2) \right\|_{\mathscr{X}^{r, \delta}} &= \left\| \sum_{j = 0}^{\infty} f_1^{(j)}(W_1) (W_2 s)^j\right\|_{\mathscr{X}^{r, \delta}} \\
                                     &\leq \sum_{j=0}^{\infty}\left\| f^{(j)}(W_1)\right\|_{\mathscr{X}^{r, \delta}} \left\| W_2 s\right\|_{\mathscr{X}^{r, \delta}}^j (2^r)^j \\
                                     &\leq C_R\left(\frac{1}{1 - \frac{\rho}{R}}\right) \sum_{j=0}^{\infty}(2^r\rho\delta)^j\\
                                     &< \infty,
  \end{align*}

  where the third line is because of \eqref{fbound} and $$\left\| W_2 s\right\|_{\mathscr{X}^{r, \delta}} = \left\| \sum_{j = 0}^{\infty}
    W_2^{(j)} s^{j+1}\right\|_{\mathscr{X}^{r, \delta}} = \sum_{j=0}^{\infty} \left\|
    W_2^{(j)}\right\|_{\mathscr{X}^{r, \delta}} \delta ^{(j + 1)} = \left\| W_2\right\|_{\mathscr{X}^{r, \delta}} \delta,$$
  and the last line is because of the assumption on $\rho$ in Theorem \ref{main}.

  It remains to show that $\left\| W(a, \lambda s)\right\|_r < \infty$, this is
  trivial since
  \begin{align*}
    \left\| W(a, \lambda s) \right\|_{\mathscr{X}^{r, \delta}} &= \left\| \sum_{j=0}^{\infty}W^{(j)}(a)\lambda^js^j\right\|_{\mathscr{X}^{r, \delta}} \\
                                                               &\leq \sum_{j = 0}^{\infty} \left\| W^{(j)}(a)\lambda^j \right\|_{C^r} \delta^{j} \leq \sum_{j=0}^{\infty}(2^r)^j \left\| W^{(j)}(a)\right\|_{C^r} \left\| \lambda\right\|_{C^r}^j \delta^{j} \\
                                                               & \leq 2 M_r \|a\|_{C^r}^r 2^{2n+1} \max_{0\leq k\leq r}(\|\lambda\|_{C^r}^k) \max_{0 \leq j < \infty}(\|\lambda\|_{C^0}^{\max(j -n - 1, 0)} j^{2(n - 1)}) \|W\|_{\mathcal{X}^{r, \delta}} \\
    & < \infty.
  \end{align*}
  for $(W(\theta, s), a(\theta), \lambda(\theta)) \in \widetilde{B}_r(\rho)$ and
  $\|\lambda\|_{C^0} < 1.$
\end{proof}

\begin{lemma}[Condition 2]
  \label{lemma_condition2}
  $\mathscr{F}|_{\tilde{B}_m\cap\mathscr{X}^r}: \widetilde{B}_r(\rho) \cap
  \mathscr{X}^r \rightarrow \mathscr{X}^{r}$ has continuous first and second order
  Fr\'echet derivatives, and satisfy the following conditions:
  \begin{itemize}
  \item[$\ast$] $\left\| D\mathscr{F}[u](h) \right\|_{m-2} \leq C_{r, \widetilde{B}_r(\rho)} \left\|
      h\right\|_{m-2}$ for $h \in \mathscr{X}^m$.
  \item[$\ast$] $\left\| D^2\mathscr{F}[u](h)(k)\right\|_{m-2} \leq C_{r, \widetilde{B}_r(\rho)} \left\|
      h\right\|_{m-1}\left\| k\right\|_{m-1}$ for $k, h \in \mathscr{X}^m.$
  \end{itemize}
  where $C_{r, \widetilde{B}_r(\rho)}$ is a constant depends on the regularity
  and the ball $\widetilde{B}_r(\rho) \in \mathscr{X}^{r, \delta}$ only.
\end{lemma}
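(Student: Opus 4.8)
The plan is to differentiate the functional $\mathscr F[u] = f\circ W - W\circ A_u$, where I abbreviate $A_u(\theta,s)=(a(\theta),\lambda(\theta)s)$, and then to read off the two tame estimates. Writing a direction as $h=(\Delta_W,\Delta_a,\Delta_\lambda)$, formal differentiation gives
\begin{equation*}
D\mathscr F[u](h) = (Df\circ W)\,\Delta_W \;-\; \Delta_W\circ A_u \;-\; (DW\circ A_u)\begin{pmatrix}\Delta_a\\ \Delta_\lambda s\end{pmatrix},
\end{equation*}
which is precisely the full linearization appearing (before the quasi-Newton simplifications) in \eqref{newton2}, and, differentiating once more in a direction $k=(\Delta_W',\Delta_a',\Delta_\lambda')$,
\begin{align*}
D^2\mathscr F[u](h)(k) = (D^2f\circ W)(\Delta_W,\Delta_W') &- (D\Delta_W\circ A_u)\begin{pmatrix}\Delta_a'\\ \Delta_\lambda' s\end{pmatrix} - (D\Delta_W'\circ A_u)\begin{pmatrix}\Delta_a\\ \Delta_\lambda s\end{pmatrix}\\ &- (D^2W\circ A_u)\!\left[\begin{pmatrix}\Delta_a\\ \Delta_\lambda s\end{pmatrix},\begin{pmatrix}\Delta_a'\\ \Delta_\lambda' s\end{pmatrix}\right].
\end{align*}
The first thing I would record is that, since $\|\lambda\|_{C^0}<1$ by Condition-0~(1), the map $A_u$ sends $\mathbb T\times\bar B_\delta$ into itself, so all the compositions above are well defined; moreover $|\lambda(\theta)s|\le\|\lambda\|_{C^0}\delta$ stays strictly inside $B_\delta$, so precomposition with $A_u$ in fact enlarges the $s$-analyticity radius by the factor $\|\lambda\|_{C^0}^{-1}>1$, which is what absorbs the Cauchy factors produced by the $\partial_s$ derivatives hidden in $DW$ and $D^2W$; this is the point where the ``no smoothing needed in the analytic variable'' phenomenon is used.

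For the estimates proper, I would bound each term in $\|\cdot\|_{m-2}$ using the product inequality (Lemma~\ref{lemma_inequalityXr} together with Lemma~\ref{inequal_Halpha}(3)) and the composition inequality (Lemma~\ref{inequal_Halpha}(4) and its $\mathcal X^{r,\delta}$ analogue). For $D\mathscr F[u](h)$: the terms $(Df\circ W)\Delta_W$ and $(DW\circ A_u)(\Delta_a,\Delta_\lambda s)^{}$ are controlled by $C\|h\|_{m-2}$ once one knows $\|Df\circ W\|_{m-2}$ and $\|DW\circ A_u\|_{m-2}$ are bounded on $\widetilde B_r(\rho)$ — for the first of these the analyticity of $f$ enters through exactly the Cauchy-estimate argument already carried out in \eqref{fbound}, for the second one uses $r\ge m$ so that $DW\in C^{m-1}$ — and the term $\Delta_W\circ A_u$ is controlled by $C(1+\|a\|_{C^{m-2}}^{m-2})\|h\|_{m-2}$; since every term only sees the $C^{m-2}$ norms of $\Delta_W,\Delta_a,\Delta_\lambda$, this yields $\|D\mathscr F[u](h)\|_{m-2}\le C_{r,\widetilde B_r(\rho)}\|h\|_{m-2}$. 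For $D^2\mathscr F[u](h)(k)$: the first and last terms again see only the $C^{m-2}$ norms of $h,k$ (the last uses $\|D^2W\|_{m-2}\le\|W\|_{C^m}$, bounded on the ball since $r\ge m$); the two middle terms, however, carry one derivative of $\Delta_W$ (resp.\ $\Delta_W'$), so writing $D\Delta_W\circ A_u$ in $\mathcal X^{m-2,\delta}$ via the composition inequality costs the $C^{m-1}$ norm of $\Delta_W$ and these two terms are bounded by $C\|h\|_{m-1}\|k\|_{m-2}$ and $C\|k\|_{m-1}\|h\|_{m-2}$. Altogether $\|D^2\mathscr F[u](h)(k)\|_{m-2}\le C_{r,\widetilde B_r(\rho)}\|h\|_{m-1}\|k\|_{m-1}$, and this single-derivative loss in the bilinear estimate is exactly what the statement records.

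The part that requires genuine care — and the main obstacle — is not the inequalities but the verification that the formal expressions above really are the Fréchet derivatives and depend continuously on $u$; the difficulty is intrinsic, because (as recalled in the introduction, see \cite{LlaveO99}) the composition operator $(W,g)\mapsto W\circ g$ is not differentiable from $C^r$ into $C^r$. The standard way around it, which I would follow, is to write the Taylor remainder $W\circ A_{u+k}-W\circ A_u-(DW\circ A_u)(A_{u+k}-A_u)$ as an integral of $D^2W$ along the segment joining $A_u$ to $A_{u+k}$ and to estimate it in the $C^{m-2}$ norm while the increment and the extra order of regularity of $W$ live one order higher — this is precisely the loss-of-one-derivative mechanism responsible for the $\|h\|_{m-1}$, $\|k\|_{m-1}$ in the second estimate, and it means the differentiability holds only ``with loss'', i.e. the effective content is the pair of tame estimates. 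Continuity of $u\mapsto D\mathscr F[u]$ and $u\mapsto D^2\mathscr F[u]$ then reduces to the continuous (though not differentiable) dependence of $W\circ A_u$ on $u$ in $C^r$, again available from \cite{LlaveO99}, combined with the product and Cauchy estimates above. I would carry the remainder bookkeeping only far enough to identify the orders; the routine constants are collected into $C_{r,\widetilde B_r(\rho)}$, which depends only on $r$, on $\rho$, on the $\mathscr X^{r,\delta}$-size of $u_0$, and on the analyticity domain of $f$.
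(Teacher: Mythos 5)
Your proposal is correct and follows essentially the same route as the paper: compute the first and second Fr\'echet derivatives explicitly (your compact formulas agree term by term with the paper's component expansion), then bound each term in the $\|\cdot\|_{m-2}$ norm via the product and composition inequalities of Lemmas~\ref{inequal_Halpha}--\ref{lemma_inequalityXr}, with the one-derivative loss in the bilinear estimate coming precisely from the $D\Delta_W\circ A_u$ and $D\Delta_W'\circ A_u$ terms. The only difference is that you are more explicit than the paper (which labels this ``routine calculation'') about why the formal expressions are genuine Fr\'echet derivatives despite the non-differentiability of the composition operator on $C^r$, and about the role of $\|\lambda\|_{C^0}<1$ in keeping the $s$-compositions inside the analyticity domain; this is a welcome clarification, not a deviation.
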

\begin{proof}
  By some routine calculation, for $h = (h_1, h_2, h_3)$, $k = (k_1, k_2, k_3) \in
  \mathscr{X}^r$, where $h_1, k_1 \in \mathcal{X}^r \times \mathcal{X}^r$, $h_2, h_3, k_2, k_3 \in
  C^r$, we can calculate the first and second order
  Fr\'echet derivatives as follows:
  \begin{align*}
    D\mathscr{F}[u](h) &= Df(W)h_1 - \partial_1W(a, \lambda)h_2 - \partial_2W(a, \lambda)h_3 - h_1(a, \lambda),\\
    D^2\mathscr{F}[u](k, h) &= D^2f(W)(k_1, h_1) - \partial_{11}W(a, \lambda)(k_2, h_2) - \partial_{12}W(a, \lambda)(k_3, h_2) \\
                     &- \partial_1k_1(a, \lambda) h_2 - \partial_{21}W(a, \lambda)(k_2, h_3) - \partial_{22}W(a, \lambda)(k_3, h_3) \\
                     &- \partial_2k_1(a, \lambda) h_3 - \partial_1h_1(a, \lambda) k_2 - \partial_2h_1(a, \lambda) k_3
  \end{align*}
  Thus we have
  \begin{align*}
    \left\| D\mathscr{F}[u](h) \right\|_{m-2} \leq& 2^{2 n + 1} (\left\| Df(W)\right\|_{m-2} \left\| h_1\right\|_{m-2} + \left\| \partial_1W(a, \lambda)\right\|_{m-2} \left\| h_2\right\|_{m-2})\\
                                               &+ \left\| \partial_2 W(a, \lambda)\right\|_{m-2} \left\| h_3\right\|_{m-2}) + \left\| h_1(a, \lambda)\right\|_{m-2}\\
    \leq& C_{r, \widetilde{B}_r(\rho)} \left\| h \right\|_{m-2}.
  \end{align*}
  and
  \begin{align*}
    \left\| D^2\mathscr{F}[u](k, h) \right\|_{m-2} \leq& \left\|D^2f(W)(k_1, h_1)\right\|_{m-2} - \left\|\partial_{11}W(a, \lambda)(k_2, h_2)\right\|_{m-2}\\ 
    &- \left\|\partial_{12}W(a, \lambda)(k_3, h_2)\right\|_{m-2} 
                                                    - \left\|\partial_1k_1(a, \lambda) h_2\right\|_{m-2} \\
                                                    &- \left\|\partial_{21}W(a, \lambda)(k_2, h_3)\right\|_{m-2} - \left\|\partial_{22}W(a, \lambda)(k_3, h_3)\right\|_{m-2} \\
                                                    &- \left\|\partial_2k_1(a, \lambda) h_3\right\|_{m-2} - \left\|\partial_1h_1(a, \lambda) k_2\right\|_{m-2} - \left\|\partial_2h_1(a, \lambda) k_3\right\|_{m-2}.\\
    \leq& ~ C_{r, \widetilde{B}_r(\rho)}\big(\left\| h \right\|_{m-2} \left\| k \right\|_{m-2}  + \left\| h\right\|_{m-1} \left\| k \right\|_{m-2} +
          \left\| h \right\|_{m-2} \left\| k \right\|_{m-1}  \\
    &+ \left\| h\right\|_{m-1} \left\| k \right\|_{m-1}\big)\\
    \leq& C_{r, \widetilde{B}_r(\rho)} \left\| h \right\|_{m-1} \left\| k \right\|_{m-1}
  \end{align*}
\end{proof}

\begin{lemma}[Condition 3]
  \label{lemma_condition3}
  For $u \in \widetilde{B}_r(\rho)$ and $r = m - 2$, $m + 2$, we have 
\[ \left\| \eta[u] \mathscr{F}[u] \right\|_{r} \leq C_{r, \widetilde{B}_r(\rho)} \left\|  \mathscr{F}[u]\right\|_r.
\]
where $\eta[u]$ serves as the approximate inverse of the derivative of the
  functional $\mathscr{F}[u]$, which is defined in our algorithm in Section~\ref{algorithm}. 
\end{lemma}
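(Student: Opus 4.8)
The plan is to unwind the definition of $\eta[u]$: by construction in Section~\ref{algorithm} (Algorithm~\ref{algorithm_n}), $\eta[u]$ sends the error $e=\mathscr{F}[u]$ to the triple of corrections $(\Delta_W,\Delta_a,\Delta_\lambda)$, and the assertion is exactly that producing this correction costs no loss of regularity. I would bound, in the order the algorithm computes them, each intermediate object in its natural norm ($C^r$ or $\mathcal{X}^{r,\delta}$) by $\|e\|_r$, with all constants depending only on $r$ and the ball $\widetilde B_r(\rho)$. Since $u\in\widetilde B_r(\rho)$ with $r=m+2$ (the harder endpoint; $r=m-2$ is similar and easier because it is below the solvability threshold discussed next) we have $u\in\mathscr{X}^{m+2,\delta}$ by Condition-0(2), so $\|W\|_{\mathscr{X}^{m+2,\delta}}$, $\|a\|_{C^{m+2}}$, $\|\lambda\|_{C^{m+2}}$, $\|(Da)^{-1}\|_{C^0}$ and --- taking $\rho$ small and using the remark after \eqref{frame} --- $\|(DW(a,\lambda s))^{-1}\|_{\mathcal{X}^{m+2,\delta}}$ are all bounded by constants of the stated type. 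These feed the Banach-algebra and composition estimates of Lemmas~\ref{inequal_Halpha}, \ref{inequal_more_Cr} and \ref{lemma_inequalityXr}, which I use freely.

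The opening steps are routine: $\widetilde e=(DW(a,\lambda s))^{-1}e$ obeys $\|\widetilde e\|_r\le C_{r,\widetilde B_r(\rho)}\|e\|_r$ by the product estimate in $\mathcal{X}^{r,\delta}$; and since $\|v^{(j)}\|_{C^r}\le\delta^{-j}\|v\|_{\mathcal{X}^{r,\delta}}$, the coefficient corrections $\Delta_a=-\widetilde e_1^{(0)}$ and $\Delta_\lambda=-M^{(1)}$ are controlled once $\widetilde e$ and $M$ are. The real content is the estimate of $\Gamma_1$ and $\Gamma_2$. Each $\Gamma_1^{(j)}$, $j\ge1$, solves \eqref{eq1_higher}, of the form \eqref{coho} with $l=\lambda^j/Da$; $\Gamma_2^{(0)}$ solves \eqref{eq2_order0}, of the form \eqref{coho} after composing with $a^{-1}$ (so the iteration variable steps by $a^{-1}$ and $l=\lambda\circ a^{-1}$); each $\Gamma_2^{(j)}$, $j\ge2$, solves \eqref{eq2_higher}, of the form \eqref{coho} with $l=\lambda^{j-1}$. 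Lemma~\ref{lemma_coho} applies to each of these as soon as the threshold condition $\|l\|_{C^0}\|Da\|_{C^0}^r<1$ (resp. with $D(a^{-1})$) holds, and this is exactly what Condition-0(3) is built for: its three terms bound $r\le m+2$ below $-\ln(\|\lambda\|_{C^0}\|(Da)^{-1}\|_{C^0})/\ln\|Da\|_{C^0}$, $-\ln\|\lambda\|_{C^0}/\ln\|D(a^{-1})\|_{C^0}$ and $-\ln\|\lambda\|_{C^0}/\ln\|Da\|_{C^0}$, which are precisely the $j=1$ case of the first family, the $\Gamma_2^{(0)}$ equation, and the $j=2$ case of the last family; for all larger $j$ the extra factor $\|\lambda\|_{C^0}^{j}$ (resp. $\|\lambda\|_{C^0}^{j-1}$, which is $<1$) makes the condition strictly easier and improves the geometric rate in \eqref{coho_sol}.

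Granting solvability, Lemma~\ref{lemma_coho} yields $\|\Gamma_1^{(j)}\|_{C^r}\le C_{l,a,r}\|\widetilde e_1^{(j)}/Da\|_{C^r}$ and analogous bounds for the $\Gamma_2^{(j)}$, and it remains to sum these against the weights $\delta^j$. This is the one genuinely delicate point: the constant $C_{l,a,r}$ from the proof of Lemma~\ref{lemma_coho} carries a factor $(\|l\|_{C^r}+\|a\|_{C^r})^{r+1}$, a negative power $\|l\|_{C^0}^{-n}$, and the sum $\sum_p p^n(\|Da\|_{C^0}^r\|l\|_{C^0})^p$; with $l=\lambda^j/Da$ these grow at most like a fixed power of $j$ times $\|\lambda\|_{C^0}^{-nj}$ (using Lemma~\ref{inequal_more_Cr}(7) for $\|\lambda^j\|_{C^r}$). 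I would absorb this growth by choosing $\delta$ small relative to $\|\lambda\|_{C^0}$ (and to the analyticity radius implicit in $\widetilde e$), so that $\sum_j\|\Gamma_i^{(j)}\|_{C^r}\delta^j\le C_{r,\widetilde B_r(\rho)}\|\widetilde e\|_{\mathcal{X}^{r,\delta}}$. Then $M=\widetilde e_2-D\lambda\,s\,\Gamma_1$ is bounded by the algebra estimate together with $\|s\,g\|_{\mathcal{X}^{r,\delta}}=\delta\|g\|_{\mathcal{X}^{r,\delta}}$, hence so are $\Delta_\lambda$ and $\Gamma_2$, and finally $\Delta_W=DW\,\Gamma$ is bounded by one more application of Lemma~\ref{lemma_inequalityXr}. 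Chaining the inequalities gives $\|\eta[u]\mathscr{F}[u]\|_r\le C_{r,\widetilde B_r(\rho)}\|\mathscr{F}[u]\|_r$ for $r=m-2$ and $r=m+2$. The principal obstacle, as indicated, is the uniformity in the Taylor index $j$ of the cohomological-equation constants; once that is settled, the rest is bookkeeping with estimates already in hand.
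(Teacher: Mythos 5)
Your overall route is the same as the paper's: unwind $\eta[u]$ as the algorithm's map from $e$ to $(\Delta_W,\Delta_a,\Delta_\lambda)$, bound $\widetilde e=(DW(a,\lambda s))^{-1}e$ by $e$, apply Lemma~\ref{lemma_coho} coefficientwise to the hierarchy \eqref{eq1_higher}, \eqref{eq2_order0}, \eqref{eq2_higher} together with the explicit choices $\Delta_a=-\widetilde e_1^{(0)}$, $\Delta_\lambda=-M^{(1)}$, and chain the algebra/composition estimates; your matching of the three terms in Condition-0(3) with the worst cases ($j=1$ of the first family, the $a^{-1}$-equation, $j=2$ of the last family) is a detail the paper leaves implicit and you get it right. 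The problem is in the one step you yourself flag as the crux, and your proposed fix would fail. If the constant of Lemma~\ref{lemma_coho} for $l=\lambda^j/Da$ (or $\lambda^{j-1}$) genuinely grew like $\Lambda^j$ with $\Lambda>1$ (e.g.\ $\Lambda=\|\lambda\|_{C^0}^{-n}$), then $\sum_j \Lambda^j\|\widetilde e^{(j)}\|_{C^r}\delta^j$ is, up to a constant, the norm of $\widetilde e$ in $\mathcal{X}^{r,\Lambda\delta}$, a strictly stronger norm than $\|\widetilde e\|_{\mathcal{X}^{r,\delta}}$; shrinking $\delta$ rescales both sides of the desired inequality identically and cannot absorb the factor. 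What you would actually obtain is a loss of domain in the analytic variable $s$, which neither the statement of this lemma nor hypothesis (3) of Theorem~\ref{NMIFT} allows (the Nash--Moser scheme keeps $\delta$ fixed throughout, and smoothing is performed only in $\theta$).

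The correct resolution is that the solution operators for the cohomological hierarchy are bounded on $C^r$ \emph{uniformly in the Taylor index $j$}. The offending factor $\|l\|_{C^0}^{-n}$ in the constant of Lemma~\ref{lemma_coho} is an artifact of replacing $\|l\|_{C^0}^{\max(0,k-n-1)}$ by $\|l\|_{C^0}^{k}\|l\|_{C^0}^{-n-1}$ for every $k$ so as to make the whole series geometric. Keeping the exact exponent, the first $n+1$ terms of \eqref{coho_sol} are bounded uniformly in $j$ (the $C^r$ norms of $\lambda^j/Da$ and $\lambda^{j-1}$ are bounded, in fact decay, in $j$ by Lemma~\ref{inequal_more_Cr}), while for $k>n+1$ the geometric ratio is controlled by $\|l\|_{C^0}\|Da\|_{C^0}^{r}\le\|\lambda\|_{C^0}^{j}\|(Da)^{-1}\|_{C^0}\|Da\|_{C^0}^{r}$, which is already $<1$ for $j=1$ under Condition-0(3) and only improves as $j$ grows; alternatively, for $j$ beyond a fixed $j_0$ depending only on $u\in\widetilde B_r(\rho)$ one can solve the equation by a contraction directly in $C^r$ with norm tending to $1$. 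Either way $\sup_j C_j<\infty$, so the weighted sums $\sum_j\|\Gamma_i^{(j)}\|_{C^r}\delta^j$ close with the \emph{same} $\delta$, which is what the paper's terse proof implicitly relies on; with that replacement of your $\delta$-shrinking argument, the rest of your bookkeeping ($M$, $\Delta_\lambda$, $\Gamma_2$, $\Delta_W=DW\,\Gamma$) goes through as you describe.
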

\begin{proof}
  Note that we only need to apply $\eta[u]$ on the range of 
$\mathscr{F}[u]$ and we do not need estimates on the whole space. 
In contrast with other Nash-Moser implicit function theorems, 
the operator $\eta[u]$ is bounded from spaces to themselves and does 
not entail any loss of regularity. 
  
  From Lemma~\ref{lemma_coho}, by equation~\eqref{eq1_higher}, \eqref{eq2_order0}
  and \eqref{eq2_higher}, we have
  $$\left\| \Gamma_1 \right\|_r \leq C_{r, \widetilde{B}_r(\rho)} \left\| \widetilde{e}_1\right\|_r $$
  and
  $$ \left\| \Gamma_2 \right\|_r \leq C_{r, \widetilde{B}_r(\rho)} (\left\|
    \widetilde{e}_1\right\|_r + \left\| \widetilde{e}_2\right\|_r), $$
  from equation~\eqref{eq1_order0} and \eqref{eq2_order1}, we also have
  $$ \left\| \Delta_a\right\|_r \leq C_{r, \widetilde{B}_r(\rho)} \left\| \widetilde{e}_1
  \right\|_r$$
  and
  $$ \left\| \Delta_{\lambda} \right\|_r \leq C_{r, \widetilde{B}_r(\rho)} \left\|
    \widetilde{e}_2 \right\|_r.$$
  Together with $\left\| \widetilde{e} \right\|_r \leq C_{r, \widetilde{B}_r(\rho)} \left\| e \right\|_r$, which can
  be shown trivialy,  we have $\left\| \Delta_W\right\|_r \leq C_{r, \widetilde{B}_r(\rho)}
  \left\| e
  \right\|_r$, $\left\| \Delta_a\right\|_r \leq C_{r, \widetilde{B}_r(\rho)} \left\| e \right\|_r$, and
  $\left\| \Delta_{\lambda} \right\|_r \leq C_{r, \widetilde{B}_r(\rho)} \left\| e
  \right\|_r$, which finishes the proof.
\end{proof}

\begin{lemma}[Condition 4]
  \label{lemma_condition4}
  For $u \in \tilde{B}_m$, we have $$\left\| (D\mathscr{F}[u]\eta[u] - Id)
    \mathscr{F}[u] \right\|_{m - 2} \leq C_{r, \widetilde{B}_r(\rho)} \left\| \mathscr{F}[u] \right\|_m \left\|  \mathscr{F}[u]\right\|_{m -1}.$$ 
\end{lemma}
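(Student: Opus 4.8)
The plan is to compute the left-hand side \emph{exactly} — it turns out to be a single bilinear term in $\mathscr{F}[u]$ — and then estimate it.  Write $e=\mathscr{F}[u]$ and let $(\Gamma,\Delta_a,\Delta_\lambda)$ be the output of Algorithm~\ref{algorithm_n} applied to $u$, with $\Delta_W=DW\,\Gamma$ as in~\eqref{frame}; with the sign convention of Appendix~\ref{appendix} one has $\eta[u]\mathscr{F}[u]=-(\Delta_W,\Delta_a,\Delta_\lambda)$.  Retracing Section~\ref{algorithm_derivation}, the components~\eqref{eq1}--\eqref{eq2} are \emph{exactly} equation~\eqref{cohom}, and~\eqref{cohom} is obtained from the genuine linearized equation~\eqref{newton2} by substituting~\eqref{frame} and~\eqref{error2} and then discarding the single term $(DW(a(\theta),\lambda(\theta)s))^{-1}De(\theta,s)\Gamma(\theta,s)$.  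Multiplying the resulting identity back by $DW(a,\lambda s)$ shows that the computed correction satisfies
\[
  Df(W)\Delta_W - DW(a(\theta),\lambda(\theta)s)\begin{pmatrix}\Delta_a(\theta)\\ \Delta_{\lambda}(\theta)s\end{pmatrix} - \Delta_W(a(\theta),\lambda(\theta)s) = -e + De\,\Gamma ,
\]
and the left-hand side is precisely $D\mathscr{F}[u](\Delta_W,\Delta_a,\Delta_\lambda)$ by the formula in Lemma~\ref{lemma_condition2}; hence
\[
  (D\mathscr{F}[u]\eta[u]-Id)\,\mathscr{F}[u] = -De(\theta,s)\,\Gamma(\theta,s).
\]
The genuinely quadratic terms dropped in~\eqref{newton} — the Taylor remainder of $f$ and the term $D\Delta_W(a,\lambda s)\begin{pmatrix}\Delta_a\\ \Delta_\lambda s\end{pmatrix}$ — do not appear here: they belong to $\mathscr{F}[u+\eta[u]\mathscr{F}[u]]-\mathscr{F}[u]-D\mathscr{F}[u]\eta[u]\mathscr{F}[u]$ and are controlled separately through the $D^2\mathscr{F}$ bound of Lemma~\ref{lemma_condition2}.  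The graph-style normalizations $\Gamma_1^{(0)}=0$, $\Gamma_2^{(1)}=0$ only pin down the non-unique solution of the underdetermined equations~\eqref{eq1_order0}, \eqref{eq2_order1} and contribute nothing to the discrepancy.

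It then remains to estimate $\|De\cdot\Gamma\|_{m-2}$.  First, the bounds already used in Lemma~\ref{lemma_condition3}, which rest on Lemma~\ref{lemma_coho} together with $\|\widetilde e\|_r\le C\|e\|_r$ and $\|M\|_r\le C\|e\|_r$, give $\|\Gamma\|_r\le C_{r,\widetilde B_r(\rho)}\|e\|_r$ for every $r$ in the admissible range $[m-2,m+2]$ secured by restriction~(3) of \textbf{Condition-0}; in particular $\|\Gamma\|_{m-2}\le C\|e\|_{m-2}\le C\|e\|_m$ by the continuous inclusion of the scale.  Second, differentiation costs one derivative in $\theta$ and, by Cauchy estimates, none in $\theta$-regularity in the analytic variable $s$ (at the price of an arbitrarily small shrinkage of the analyticity radius, absorbed in the constants), so $\|De\|_{m-2}\le C\|e\|_{m-1}$.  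Third, the Banach-algebra inequality of Lemma~\ref{lemma_inequalityXr} gives $\|De\cdot\Gamma\|_{m-2}\le C\|De\|_{m-2}\|\Gamma\|_{m-2}$.  Combining the three estimates yields
\[
  \|(D\mathscr{F}[u]\eta[u]-Id)\,\mathscr{F}[u]\|_{m-2} \le C_{r,\widetilde B_r(\rho)}\,\|\mathscr{F}[u]\|_m\,\|\mathscr{F}[u]\|_{m-1},
\]
as claimed.

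The main difficulty I anticipate is the bookkeeping of the first step: checking carefully that~\eqref{eq1}--\eqref{eq2} reproduces~\eqref{cohom} with no further approximation, that the only term discarded between~\eqref{newton2} and~\eqref{cohom} is $(DW)^{-1}De\,\Gamma$, and hence that the residual collapses to the clean expression $-De\,\Gamma$ — rather than to a messier combination that would force a worse loss of derivatives.  A secondary, routine nuisance is the consistent management of the analyticity radius $\delta$ when differentiating in $s$ and when multiplying in the $\mathcal{X}^{r,\delta}$ scale, which must be tracked in harmony with the way the norms $\|\cdot\|_{m-2}$, $\|\cdot\|_{m-1}$, $\|\cdot\|_m$ are set up in Theorem~\ref{main}.
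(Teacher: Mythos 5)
Your proposal is correct and follows essentially the same route as the paper: the heart of the argument in both is that, because the algorithm solves \eqref{cohom} exactly, the only surviving term in $(D\mathscr{F}[u]\eta[u]-Id)\mathscr{F}[u]$ is the omitted term $De\,\Gamma$, which is then bounded exactly as you do, via $\|\Gamma\|_{m-2}\le C\|e\|_{m-2}$ (Lemma~\ref{lemma_condition3}/Lemma~\ref{lemma_coho}), $\|De\|_{m-2}\le C\|e\|_{m-1}$, and the product inequality. The only (harmless, and if anything cleaner) difference is that you compute the residual exactly as $-De\,\Gamma$, exploiting that the expression is linear in $\mathscr{F}[u]$, whereas the paper reaches the same term through a Taylor expansion of $\mathscr{F}[u+\Delta_u]$ and therefore also carries and bounds the extra terms $D\Delta_W(a,\lambda s)\begin{pmatrix}\Delta_a\\ \Delta_\lambda s\end{pmatrix}+\mathscr{O}(\Delta^2)$, which in your organization are correctly deferred to the $D^2\mathscr{F}$ estimate used elsewhere in Theorem~\ref{NMIFT}.
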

\begin{proof}
  \begin{align*}
    \mathscr{F}[u]& - D\mathscr{F}[u]\eta[u] \mathscr{F}[u]\\ &= \mathscr{F}[u] + D\mathscr{F}[u]\Delta_u\\  &= \mathscr{F}[u + \Delta_u] + \mathscr{O}(\Delta ^ 2)\\
    &= f(W + \Delta_W) - (W + \Delta_W)(a + \Delta_a, (\lambda + \Delta_{\lambda})s) + \mathscr{O}(\Delta ^ 2)\\
    &= f(W) + Df(W)\Delta_W - W(a, \lambda s) - DW(a, \lambda s)\begin{pmatrix}
        \Delta_a \\
        \Delta_{\lambda}s
      \end{pmatrix} - \Delta_W(a, \lambda s) \\
     & \quad + D\Delta_W(a, \lambda s)\begin{pmatrix}
        \Delta_a \\
        \Delta_{\lambda} s
      \end{pmatrix} + \mathscr{O}(\Delta ^ 2)\\
    &= -DW(a, \lambda s)\Big[ \begin{pmatrix}
      Da & 0 \\ D\lambda s & \lambda
    \end{pmatrix} \Gamma - \begin{pmatrix} \Delta_a \\ \Delta_{\lambda} s 
    \end{pmatrix} - \Gamma(a, \lambda s) - \widetilde{e}
    \Big]  \\ & \quad - De \Gamma  
      + D\Delta_W(a, \lambda s)\begin{pmatrix}
        \Delta_a \\
        \Delta_{\lambda} s
      \end{pmatrix} + \mathscr{O}(\Delta ^ 2)\\
    &= - De \Gamma  + D\Delta_W(a, \lambda s)\begin{pmatrix}
        \Delta_a \\
        \Delta_{\lambda} s
      \end{pmatrix} + \mathscr{O}(\Delta ^ 2).
  \end{align*}
  By the proof in Lemma~\ref{lemma_condition3}, and that $\left\|
    De\right\|_{m-2} \leq C_{r, \widetilde{B}_r(\rho)} \left\| e \right\|_{m-1}$, $\left\|
    D\Delta_W\right\|_{m-1} \leq C_{r, \widetilde{B}_r(\rho)} \left\| e \right\|_{m-1}$, 
  we achieve
  \begin{equation*}
    \left\| (D\mathscr{F}[u]\eta[u] - Id) \mathscr{F}[u] \right\|_{m - 2} \leq C_{r, \widetilde{B}_r(\rho)}\left\| \mathscr{F}(u) \right\|_{m - 1} \left\| \mathscr{F}(u)\right\|_m. 
  \end{equation*}
\end{proof}

\begin{lemma}[Condition 5]
  \label{lemma_condition5}
  For $u \in \tilde{B}_m \cap \mathscr{X}^{m + 2}$, we have $\left\| \mathscr{F}[u] \right\|_{m + 2} \leq C_{r, \widetilde{B}_r(\rho)}(1 + \left\|
    u\right\|_{m + 2})$.
\end{lemma}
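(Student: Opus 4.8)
The plan is to prove the \emph{tame} estimate for the nonlinear operator $\mathscr{F}[u] = f\circ W - W\circ(a,\lambda s)$ by bounding the two summands separately, exploiting that $\mathscr{F}$ is built entirely from a composition with the analytic map $f$ and a composition of $W$ with $(a,\lambda s)$. Throughout I use that $u=(W,a,\lambda)\in\widetilde{B}_m(\rho)$ forces all the low-order ($C^m$-type) norms of $W$, $a$, $\lambda$ to be bounded by a constant depending only on $u_0$ and $\rho$, and in particular $\|\lambda\|_{C^0}<1$ there; all such quantities get absorbed into $C_{r,\widetilde{B}_r(\rho)}$. Only the top-order norm $\|u\|_{m+2}$ is large, and the goal is to show it enters linearly.

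First I would treat $f\circ W$. Writing $f=(f_1,f_2)$ and expanding each $f_i$ in its geometrically convergent Taylor series in $s$ exactly as in the proof of Lemma~\ref{lemma_condition1}, the coefficients $f_i^{(j)}\circ W_1$ are estimated by Cauchy's inequalities on a fixed complex neighborhood of the (bounded) range of $W$, reducing the matter to a composition estimate in $C^{m+2}$. The decisive point is to use, in place of the crude polynomial bound of Lemma~\ref{inequal_Halpha}(4), its \emph{tame} refinement: by the Fa\`a di Bruno formula together with the interpolation inequality \eqref{interpolation}, for $\psi$ with $\|\psi\|_{C^1}$ bounded one has $\|\phi\circ\psi\|_{C^{m+2}}\le C\big(\|\phi\|_{C^{m+2}}+\|\phi\|_{C^1}\|\psi\|_{C^{m+2}}\big)$, the top derivative of $\psi$ appearing only in the single linear term $(\phi'\circ\psi)\,D^{m+2}\psi$. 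Combining this with Lemma~\ref{lemma_inequalityXr} and summing the geometric series in $s$ (convergent once the analyticity radius of $f$ and $\delta$ are fixed) gives $\|f\circ W\|_{\mathscr{Y}^{m+2,\delta}}\le C_{r,\widetilde{B}_r(\rho)}\big(1+\|W\|_{\mathscr{Y}^{m+2,\delta}}\big)$.

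For $W\circ(a,\lambda s)=\sum_{j\ge 0}(W^{(j)}\circ a)\,\lambda^j s^j$ I would estimate each summand $\|(W^{(j)}\circ a)\,\lambda^j\|_{C^{m+2}}$ by the Leibniz rule, distributing the $m+2$ derivatives between the two factors. In the term where all top derivatives hit $W^{(j)}\circ a$, the tame composition estimate controls it by $\|W^{(j)}\|_{C^{m+2}}$ plus $\|W^{(j)}\|_{C^1}\|a\|_{C^{m+2}}$ (the factor $\|W^{(j)}\|_{C^1}$ summing against $\delta^j$ to the ball-bounded $\|W\|_{\mathcal{X}^{m,\delta}}$). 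In the term where all top derivatives hit $\lambda^j$, the tame product estimate $\|\lambda^j\|_{C^{m+2}}\le C\,j^{\,p}\|\lambda\|_{C^0}^{\,j-1}\|\lambda\|_{C^{m+2}}$ (with $p$ a fixed power depending only on $m$) controls it by $\|\lambda\|_{C^{m+2}}$ times $\|W^{(j)}\|_{C^0}$, again summing to a ball constant. The mixed terms are reduced to these two extremes by interpolation and Young's inequality, so that no term ever contains a product of two large norms. Summing over $j$, the polynomial-times-geometric factors $j^{\,p}\|\lambda\|_{C^0}^{\,j-1}\delta^j$ are uniformly summable since $\|\lambda\|_{C^0}<1$ on $\widetilde{B}_m(\rho)$, yielding $\|W\circ(a,\lambda s)\|_{\mathscr{Y}^{m+2,\delta}}\le C_{r,\widetilde{B}_r(\rho)}\big(1+\|W\|_{m+2}+\|a\|_{m+2}+\|\lambda\|_{m+2}\big)$. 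Adding the two contributions and bounding $\|W\|_{m+2}+\|a\|_{m+2}+\|\lambda\|_{m+2}\le C\|u\|_{m+2}$ finishes the argument.

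The main obstacle is precisely to obtain genuine \emph{linearity} in $\|u\|_{m+2}$ rather than just a polynomial bound: a naive use of Lemma~\ref{inequal_Halpha}(4) and Lemma~\ref{inequal_more_Cr}(7) produces powers such as $\|\lambda\|_{C^{m+2}}^{\,m+2}$. This is where the interpolation-based tame refinements of the composition and product inequalities of Section~\ref{proof_basic} are essential, and where one must check that in each Leibniz term of $W\circ(a,\lambda s)$ exactly one top-order factor is isolated while all companion factors decay fast enough in $j$ to be summed against $\delta^j$.
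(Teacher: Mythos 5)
Your proposal is correct, but it takes a genuinely different and considerably more careful route than the paper. The paper disposes of Condition 5 in one line: it asserts that the computation already carried out in the proof of Lemma~\ref{lemma_condition1}, run at regularity $m+2$, bounds $\left\|\mathscr{F}[u]\right\|_{m+2}$ by a ball-dependent constant, so the factor $(1+\left\|u\right\|_{m+2})$ is not even exploited; implicitly the constant there is allowed to absorb the top-order norms of $u$ (i.e., the estimate is really read on a ball where the $\mathscr{X}^{m+2}$-norms are controlled), and no attempt is made to track how $\left\|u\right\|_{m+2}$ enters. You instead prove the honest tame estimate: you split $\mathscr{F}[u]=f\circ W-W(a,\lambda s)$, replace the polynomial composition and product bounds of Lemmas~\ref{inequal_Halpha} and~\ref{inequal_more_Cr} by their tame refinements (Fa\`a di Bruno together with the interpolation inequality \eqref{interpolation}), and verify summability in $j$ against $\delta^j$ and $\|\lambda\|_{C^0}^j$, so that the top norm $\left\|u\right\|_{m+2}$ appears exactly once, linearly, in each Leibniz term while every companion factor is bounded on $\widetilde{B}_m(\rho)$. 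What your extra work buys is an estimate valid on all of $\tilde{B}_m\cap\mathscr{X}^{m+2}$, where $\left\|u\right\|_{m+2}$ is unbounded --- which is the form in which Condition 5 is actually invoked inside Theorem~\ref{NMIFT}, since the $\mathscr{X}^{m+2}$-norms of the iterates grow along the scheme; your observation that a naive use of the stated inequalities would produce powers such as $\|a\|_{C^{m+2}}^{m+2}$ rather than a linear bound is accurate, and is precisely the point the paper's one-line proof glosses over. What the paper's shortcut buys is brevity, at the cost of leaving the dependence on the top-order norms to the reader. The tame composition and product inequalities you invoke are standard in H\"older scales and are provable with the tools of Section~\ref{proof_basic}, so your outline is complete; for a fully written proof you would only need to state and prove those two refinements explicitly, since they do not appear verbatim among the paper's listed inequalities.
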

\begin{proof}
  From the proof of Lemma~\ref{lemma_condition1} above, we can see that there
  exists a constant $C > 0$ such that $\left\| \mathscr{F}[u] \right\|_r < C_{r, \widetilde{B}_r(\rho)}$
  for $u \in \tilde{B}_m \cap \mathscr{X}^{m+2}$, thus the Lemma follows naturally.
\end{proof}

Since all the constant $C_{r, \widetilde{B}_r(\rho)}$ we get from Lemma~\ref{lemma_condition1},
\ref{lemma_condition2}, \ref{lemma_condition3}, \ref{lemma_condition4} and
\ref{lemma_condition5} are universal for $u$ in the respective domain $\widetilde{B}_r(\rho)$, we have
finished proving all the non-degeneracy conditions
required by Theorem \ref{NMIFT} in the Appendix \ref{appendix}. Thus we
have proved Theorem \ref{main}.

\appendix
\section{An Abstract Implicit Function Theorem in Scales of Banach Spaces}
\label{appendix}

In this appendix, we present and prove Theorem~\ref{main}, which is a modified
version of the Nash-Moser implicit function theorem. We have made the assumptions
in Theorem~\ref{main} 
to match the inequalities that we can achieve from the algorithm
in Section~\ref{algorithm}. We hope that this theorem can also be applied in
some other problems involving invariance equations in the theory of 
normally hyperbolic systems. 

The main idea of the Nash-Moser smoothing technique is to add a smoothing
operation inside the Newton steps. That is, even though the Newton (or
quasi-Newton) steps lose regularities, the smoothing operator restores them.

As anticipated in Section~\ref{implicitremark}, our 
problem has some unusual properties which make it impossible 
to use other results. 
As peculiarities of the analysis our problem we 
recall:
\begin{enumerate} 
\item 
The functional we are trying to solve is not differentiable from one space
to itself (It is basically, the composition operator). 
\item
The linearized equation can be solved without loss of regularity, but 
only for regularities on a range. This range does not change much by 
smoothing the problem. Hence, the technique of approximating the problem 
by $C^\infty$ or analytic ones does not produce any results.
A result we found inspiring is \cite{Schwartz60}. 
\item
The use of identities to simplify the equation leads to an extra term in the error 
estimates after applying the iterative method. 
The new error contains a term  estimated by a derivative 
of the original error multiplied by the correction (in appropriate 
norms).  Implicit function theorems 
with these terms were already considered in \cite{Vano02, CallejaL10, CallejaCL13} 
but they were treated by analytic or $C^\infty$ smoothing which 
is not possible for the problem in this paper. 
\item
In the problem at hand it is natural to use functions with a mixed regularity: finitely 
differentiable in one variable and analytic in another. 
\end{enumerate}

The statement of the abstract Nash-Moser implicit function theorem we will 
use is: 

\begin{theorem}
  \label{NMIFT}
  Let $m > 2$ and $\mathscr{X}^r$, $\mathscr{Y}^r$ for $m \leq r \leq m + 2$ be
  scales of Banach spaces with smoothing operators. Let $B_r$ be the unit ball
  in $\mathscr{X}^r$, $\widetilde{B}_r(\rho) = u_0 + \rho B_r$ be the unit ball translated by
  $u_0 \in \mathscr{X}^r$ with radius scaled by $\rho > 0$, and $B(\mathscr{Y}^r, \mathscr{X}^{r})$ is
  the space of bounded linear operators from $\mathscr{Y}^r$ to
  $\mathscr{X}^{r}$. Consider a map
  $$ \mathscr{F}: \widetilde{B}_r(\rho) \rightarrow \mathscr{Y}^r$$
  and
  $$ \eta: \tilde{B}_{r} \rightarrow B(\mathscr{Y}^{r}, \mathscr{X}^{r})$$
  satisfing:
  \begin{itemize}
  \item $\mathscr{F}(\widetilde{B}_r(\rho) \cap \mathscr{X}^r) \subset
    \mathscr{Y}^{r}$ for $m \leq r \leq m + 2$.
  \item $\mathscr{F}|_{\tilde{B}_m\cap\mathscr{X}^r}: \widetilde{B}_r(\rho) \cap
    \mathscr{X}^r \rightarrow \mathscr{X}^{r}$ has two continuous
    Fr\'echet derivatives, and satisfy the following bounded conditions:
    \begin{itemize}
    \item[$\ast$] $\left\| D\mathscr{F}[u](h) \right\|_{m-2} \leq C \left\|
        h\right\|_{m-2}$ for $h \in \mathscr{X}^m$.
    \item[$\ast$] $\left\| D^2\mathscr{F}[u](h)(k)\right\|_{m-2} \leq C \left\|
        h\right\|_{m-1}\left\| k\right\|_{m-1}$ for $k, h \in \mathscr{X}^m.$
    \end{itemize}
  \item $\left\| \eta[u] \mathscr{F}[u] \right\|_{r} \leq C \left\|  \mathscr{F}[u]\right\|_r$,
    $u \in \widetilde{B}_r(\rho)$ for $r = m - 2, m + 2$.
  \item $\left\| (D\mathscr{F}[u]\eta[u] - Id)  \mathscr{F}[u] \right\|_{m -
      2} \leq
    \left\| \mathscr{F}[u] \right\|_m \left\|  \mathscr{F}[u]\right\|_{m -1}$,
    $u \in \tilde{B}_m$.
    
  \item $\left\| \mathscr{F}[u] \right\|_{m + 2} \leq C(1 + \left\|
      u\right\|_{m + 2}), u \in \tilde{B}_m \cap \mathscr{X}^{m + 2}$.
  \end{itemize}
  Then if $\left\| \mathscr{F}[u_0]\right\|_{m-2}$ is sufficiently
  small, then there exists $u^* \in \mathscr{X}^m$ such that
  $\mathscr{F}[u^*] = 0$. Moreover, 
  $$ \left\| u_0 - u^*\right\|_m \leq C \left\|
    \mathscr{F}[u_0]\right\|_{m-2}$$
\end{theorem}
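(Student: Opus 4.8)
The plan is to prove Theorem~\ref{NMIFT} by a Nash--Moser scheme: each step is the quasi-Newton correction $\Delta_n=-\eta[u_n]\mathscr{F}[u_n]$, followed by smoothing, so that $u_{n+1}=u_n+S_{t_n}\Delta_n$ with smoothing strengths $t_n=t_0^{\kappa^n}$ growing doubly-exponentially for a fixed $\kappa\in(1,2)$ and a structural constant $t_0>1$ chosen large at the end. The smoothing is unavoidable: the fourth hypothesis controls the quasi-Newton error $(D\mathscr{F}[u_n]\eta[u_n]-Id)\mathscr{F}[u_n]$ in $\|\cdot\|_{m-2}$ only through the higher norms $\|\mathscr{F}[u_n]\|_{m}$ and $\|\mathscr{F}[u_n]\|_{m-1}$, so a plain iteration would need control of the residual in ever higher norms and would exhaust the two-derivative window $[m-2,m+2]$ after a couple of steps; \eqref{smoothing1}--\eqref{smoothing2} let us trade that loss for a controlled growth of the top norm.

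First I would fix inductive hypotheses at level $n$: $u_n\in\widetilde{B}_m(\rho)$ (so that all five hypotheses apply along the orbit), $\varepsilon_n:=\|\mathscr{F}[u_n]\|_{m-2}\le v\,t_n^{-\mu}$ for an exponent $\mu>2$, and a moderate-growth bound $\|u_n\|_{m+2}\le K\,t_n^{\sigma}$; one should expect $\sigma>0$ to be genuinely necessary, since $u^*$ is only asserted to lie in $\mathscr{X}^m$ and the top norm may diverge. The key estimate comes from a second-order Taylor expansion of $\mathscr{F}$ at $u_n$,
$$\mathscr{F}[u_{n+1}]=\big(Id-D\mathscr{F}[u_n]\eta[u_n]\big)\mathscr{F}[u_n]+D\mathscr{F}[u_n]\big((S_{t_n}-I)\Delta_n\big)+R_n,$$
with $R_n$ the quadratic remainder in the direction $S_{t_n}\Delta_n$. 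The first term is bounded by the fourth hypothesis, the second by the $D\mathscr{F}$-bound of the second hypothesis together with \eqref{smoothing2} and $\|\Delta_n\|_{m+2}\le C(1+\|u_n\|_{m+2})$ (third hypothesis at $r=m+2$ and fifth hypothesis), the third by the $D^2\mathscr{F}$-bound and \eqref{smoothing1}. The interpolation inequality \eqref{interpolation} then rewrites $\|\mathscr{F}[u_n]\|_{m-1}$ and $\|\mathscr{F}[u_n]\|_{m}$ as geometric means of $\varepsilon_n$ and $\|\mathscr{F}[u_n]\|_{m+2}\le C(1+\|u_n\|_{m+2})$, giving a recursion of the schematic form
$$\varepsilon_{n+1}\le C\big(\varepsilon_n^{p}(1+\|u_n\|_{m+2})^{q}+t_n^{-4}(1+\|u_n\|_{m+2})+t_n^{2}\varepsilon_n^{2}\big),\qquad p>1,$$
together with the tame update $\|u_{n+1}\|_{m+2}\le\|u_n\|_{m+2}+C\,t_n^{4}\varepsilon_n$ (from \eqref{smoothing1} and the third hypothesis at $r=m-2$).

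The hard part will be closing this induction: one must pick $\kappa,\mu,\sigma,t_0$ so that both recursions reproduce the hypotheses at level $n+1$. This forces the effective gain exponent in the first term to beat $\kappa$, forces $t_n^{-4}(1+\|u_n\|_{m+2})\le\tfrac{1}{3} t_{n+1}^{-\mu}$ (hence $\mu\kappa\le 4$, so the top norm genuinely grows, like $t_n^{4-\mu}$), and makes the remainder $t_n^2\varepsilon_n^2$ negligible. This is the usual delicate Nash--Moser balancing, here made tighter than customary because the admissible regularity band is only two derivatives wide, leaving essentially no slack. With a compatible choice, the smallness hypothesis on $\|\mathscr{F}[u_0]\|_{m-2}$ is exactly $\varepsilon_0\le v\,t_0^{-\mu}$, and $\varepsilon_n\le v\,t_0^{-\mu\kappa^n}$ decays super-exponentially.

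Finally I would conclude: $\|u_{n+1}-u_n\|_m=\|S_{t_n}\Delta_n\|_m\le C\,t_n^{2}\varepsilon_n\le Cv\,t_0^{(2-\mu)\kappa^n}$ is summable because $\mu>2$, so $\{u_n\}$ is Cauchy in $\mathscr{X}^m$ with limit $u^*\in\widetilde{B}_m(\rho)$; the $D\mathscr{F}$-bound makes $\mathscr{F}$ Lipschitz in $\|\cdot\|_{m-2}$, whence $\mathscr{F}[u^*]=\lim_n\mathscr{F}[u_n]=0$. The quantitative bound follows since $\|u^*-u_0\|_m\le\sum_{n\ge 0}\|S_{t_n}\Delta_n\|_m\le\sum_{n\ge 0}C\,t_n^{2}\varepsilon_n$ is dominated by its first term $C\,t_0^{2}\varepsilon_0$, and $t_0$ depends only on the structural constants, so the whole sum is $\le C'\|\mathscr{F}[u_0]\|_{m-2}$.
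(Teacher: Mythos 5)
Your scheme coincides with the paper's: the same smoothed quasi-Newton iterate $u_{n+1}=u_n-S_{t_n}\eta[u_n]\mathscr{F}[u_n]$ with doubly exponential $t_n$, the same three inductive statements (membership in the ball, super-exponential decay of $\|\mathscr{F}[u_n]\|_{m-2}$, controlled growth of $\|u_n\|_{m+2}$), the same three-term splitting of $\mathscr{F}[u_{n+1}]$ with the same hypothesis used on each piece (hypothesis (4) plus interpolation and (5) for the non-invertibility term, the $D^2\mathscr{F}$ bound plus \eqref{smoothing1} for the Taylor remainder, the $D\mathscr{F}$ bound plus \eqref{smoothing2}, (3) at $r=m+2$ and (5) for the smoothing error), and the same telescoping conclusion. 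The one place you deviate is exactly where your argument breaks: the update of the top norm. Write $\varepsilon_n\le v\,t_n^{-\mu}$, $1+\|u_n\|_{m+2}\le K\,t_n^{\sigma}$, $t_{n+1}=t_n^{\kappa}$, $\kappa>1$. Your quadratic term $t_n^{2}\varepsilon_n^{2}\lesssim t_n^{2-2\mu}$ must beat $t_{n+1}^{-\mu}=t_n^{-\mu\kappa}$, forcing $\mu\kappa<2\mu-2$, i.e. $\mu>2/(2-\kappa)>2$; the smoothing-error term $t_n^{-4}(1+\|u_n\|_{m+2})$ forces $\sigma<4-\mu\kappa$; and your update $\|u_{n+1}\|_{m+2}\le\|u_n\|_{m+2}+C\,t_n^{4}\varepsilon_n$ makes the top norm grow like $t_n^{4-\mu}=t_{n+1}^{(4-\mu)/\kappa}$, so closing the third inductive statement needs $\sigma\ge(4-\mu)/\kappa$. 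The last two requirements give $(4-\mu)/\kappa<4-\mu\kappa$, which is equivalent to $\mu(\kappa+1)<4$, i.e. $\mu<2$ --- contradicting $\mu>2$. So with the bookkeeping you propose there is no admissible choice of $(\mu,\sigma,\kappa)$ with $\kappa>1$, and the ``delicate balancing'' you defer cannot in fact be carried out: the crude bound $\|S_{t_n}\Delta_n\|_{m+2}\le Ct_n^{4}\|\Delta_n\|_{m-2}$ spends the entire four-derivative smoothing gain on the top norm and makes it grow too fast.

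The repair is an estimate you already invoke for the smoothing-error term: bound the increment of the top norm by
$\|S_{t_n}\Delta_n\|_{m+2}\le C\|\Delta_n\|_{m+2}\le C\|\mathscr{F}[u_n]\|_{m+2}\le C\bigl(1+\|u_n\|_{m+2}\bigr)$,
using \eqref{smoothing1} with $\lambda=\mu=m+2$, hypothesis (3) at $r=m+2$ and hypothesis (5). This is precisely what the paper does in its condition $P_3$: then $1+\|u_n\|_{m+2}$ grows at most geometrically in $n$, hence slower than $t_n^{\sigma}$ for every fixed $\sigma>0$, and the remaining constraints (roughly $\kappa<2-2/\mu$ from the quadratic term, $\mu\kappa<4$ from the smoothing error, and $\kappa<\tfrac54$ from the $\varepsilon_n^{5/4}(1+\|u_n\|_{m+2})^{3/4}$ term produced by interpolation) are simultaneously satisfiable, e.g. $\mu=3$, $\kappa=1.2$, $\sigma$ tiny, with $t_0$ (equivalently $\beta$) large. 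With that single change your induction closes and the rest of your argument (Cauchy in $\mathscr{X}^m$ since $\sum t_n^{2}\varepsilon_n<\infty$ for $\mu>2$, continuity of $\mathscr{F}$ in $\|\cdot\|_{m-2}$ from the $D\mathscr{F}$ bound, and the a-posteriori estimate $\|u^*-u_0\|_m\le C\|\mathscr{F}[u_0]\|_{m-2}$) goes through as in the paper; note only that the achievable $\kappa$ is limited by the constraints above, not arbitrarily close to $2$.
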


\begin{proof}
  Let $\kappa > 1, \beta, \mu, \alpha > 0, 0 < v < 1$ be real numbers to be
  specified later. Consider the sequence $u_n$ such that
  \begin{equation}
    \label{smooth_newton}
    u_n = u_{n-1} - S_{t_{n-1}} \eta[u_{n-1}] \mathscr{F}[u_{n-1}],
  \end{equation}
  where $t_n = e^{\beta \kappa^{n-1}}$.
  We will prove that this sequence satisfies the following three conditions
  inductively:
  \begin{enumerate}[start = 1, label={($P_{\arabic*}$n):}]
  \item $u_n \in \tilde{B}_m$,
  \item $\left\| \mathscr{F}[u_n] \right\|_{m - 2} \leq v e^{-2\mu\beta\kappa^n}$,
  \item $1 + \left\| u_n \right\|_{m + 2} \leq v e^{2\alpha\beta\kappa^n}$.
  \end{enumerate}

  First, for $n = 0$, we know $P_1(n=0)$ is ture automatically. By setting $v =
  \left\| \mathscr{F}[u_0] \right\|_{m - 2} e^{2 \mu \beta}$ with $\mu, \beta$
  be specified later and $\left\| \mathscr{F}[u_0] \right\|_{m - 2} < e^{- 2 \mu
    \beta}$,
  $P_2(n=0)$ is true. Given $\alpha$, we can let $\beta$ be big
  enough such that condition $P_3(n=0)$: $1 + \left\| u_0 \right\|_{m + 2} \leq
  e^{2\alpha\beta}$ holds.
  Now, suppose $P_1, P_2$ and $P_3$ are true for $n - 1$, we will now show that
  the three conditions are true for $n$.

  By assumption (3) and $P_2(n-1)$, we have
  \begin{equation} \label{etaf}
    \left\| \eta[u_n] \mathscr{F}[u_n] \right\|_{\mathscr{X}^{m-2}} \leq C \left\| \mathscr{F}[u_n] \right\|_{\mathscr{Y}^{m-2}} \leq Cve^{-2\mu\beta\kappa^n},
  \end{equation}
  it follows from \eqref{smooth_newton} and \eqref{etaf} that
  \begin{align}
    \label{unun}
    \left\| u_n - u_{n-1} \right\|_{\mathscr{X}^{m}} &= \left\| S_{t_{n-1}} \eta[u_{n-1}] \mathscr{F}[u_{n-1}] \right\|_{\mathscr{X}^{m}}\nonumber \\
                                                     &\leq C t_{n-1}^2 \left\| \eta[u_{n-1}] \mathscr{F}[u_{n-1}]\right\|_{\mathscr{X}^{m-2}} \nonumber \\
                                                     &\leq C v e^{2\beta\kappa^{n-1}(1 - \mu)}.
  \end{align}
  where the second inequality above comes from \eqref{smoothing1}.

  Thus we have
  \begin{align} \label{NMcondition1}
    \left\| u_n - u_0 \right\|_{\mathscr{X}^{m}} &\leq \sum_{j=1}^{\infty} \left\| u_j - u_{j-1} \right\|_{\mathscr{X}^{m}} \nonumber \\
                                                 &\leq Cv(e^{2\beta(1-\mu)} + e^{2\beta(1-\mu)\kappa} + e^{2\beta(1-\mu)\kappa^2} + e^{2\beta(1-\mu)\kappa^3} + e^{2\beta(1-\mu)\kappa^4} + \sum_{j=6}^{\infty}e^{2\beta(1-\mu)\kappa^j})\nonumber \\
                                                 &\leq Cv(e^{2\beta(1-\mu)} + e^{2\beta(1-\mu)\kappa} + e^{2\beta(1-\mu)\kappa^2} + e^{2\beta(1-\mu)\kappa^3} + e^{2\beta(1-\mu)\kappa^4} + \frac{e^{2\beta(1-\mu)\kappa^6}}{1 - e^{2\beta(1-\mu)\kappa}})\nonumber \\
                                                 &\leq \rho.
  \end{align}
  where the third inequality comes from the fact that $\kappa^{j-1} > j\kappa$
  for $j \geq 5$ and $\kappa > \sqrt[3]{5}$, and the last inequality can be
  achieved if $\mu > 1$ and $\beta$ is large enough. Thus we have proved $P_1(n)$.

  In order to prove $P_2(n)$, let us break $\mathscr{F}[u_n]$ as follows:
  \begin{align} \label{NMcondition2_0}
    \left\| \mathscr{F}[u_n]\right\|_{\mathscr{Y}^{m-2}} \leq& \left\| \mathscr{F}[u_n] - \mathscr{F}[u_{n-1}] + D\mathscr{F}[u_n] S_{t_{n-1}} \eta[u_{n-1}] \mathscr{F}[u_{n-1}]\right\|_{\mathscr{Y}^{m-2}} \nonumber \\
                                              &+ \left\|(Id - D\mathscr{F}[u_{n-1}]\eta[u_{n-1}]) \mathscr{F}[u_{n-1}] \right\|_{\mathscr{Y}^{m-2}} \nonumber \\
                                              &+ \left\| D\mathscr{F}[u_{n-1}](Id - S_{t_{n-1}}) \eta[u_{n-1}] \mathscr{F}[u_{n-1}] \right\|_{\mathscr{Y}^{m-2}}
  \end{align}
  and estimates the three terms one by one:
  
  For the first line, by Taylor expansion, the induction condition in the
  second part of assumption (2), \eqref{smoothing1} and \eqref{etaf}, we have 
  \begin{align} \label{NMcondition2_1}
    l_1 &= \left\| \mathscr{F}[u_n] - \mathscr{F}[u_{n-1}] + D\mathscr{F}[u_n] S_{t_{n-1}} \eta[u_{n-1}] \mathscr{F}[u_{n-1}]\right\|_{\mathscr{Y}^{m-2}} \nonumber \\
    &\leq C \left\|D^2\mathscr{F}[u_n](S_{t_{n-1}}\eta[u_{n-1}]\mathscr{F}[u_{n-1}])(S_{t_{n-1}}\eta[u_{n-1}]\mathscr{F}[u_{n-1}]) \right\|_{\mathscr{Y}^{m-2}} \nonumber \\
    &\leq C \left\| (S_{t_{n-1}}\eta[u_{n-1}]\mathscr{F}[u_{n-1}])\right\|_{\mathscr{Y}^{m-1}}^2 \nonumber \\ 
    &\leq C t_{n-1}^2 \left\| \eta[u_{n-1}\mathscr{F}[u_{n-1}]]\right\|_{\mathscr{Y}^{m-2}}^2 \nonumber \\
    &\leq Cve^{(1 - 2\mu)2\beta\kappa^{n-1}}
  \end{align}

  For the second line, by assumption(4), \eqref{interpolation}, $P_2(n - 1)$,
  assumption (5) and $P_3(n-1)$,
  we have
  \begin{align} \label{NMcondition2_2}
    l_2 &= \left\| (Id - D\mathscr{F}[u_{n-1}]\eta[u_{n-1}]) \mathscr{F}[u_{n-1}] \right\|_{\mathscr{Y}^{m-2}} \nonumber \\
        &\leq C \left\| \mathscr{F}[u_{n-1}] \right\|_{\mathscr{Y}^{m-1}} \left\| \mathscr{F}[u_n]\right\|_{\mathscr{Y}^{m}} \nonumber \\
        &\leq C \left\| \mathscr{F}[u_{n-1}]\right\|_{\mathscr{Y}^{m-2}}^{\frac{5}{4}} \left\| \mathscr{F}[u_{n-1}] \right\|_{\mathscr{Y}^{m+2}}^{\frac{3}{4}} \nonumber \\
        &\leq C \left\| \mathscr{F}[u_{n-1}]\right\|_{\mathscr{Y}^{m-2}}^{\frac{5}{4}} (1 + \left\| u_{n-1} \right\|_{\mathscr{Y}^{m+2}})^{\frac{3}{4}} \nonumber \\
        &\leq C v^{\frac{3}{2}} e^{(-\frac{5}{2}\mu + \frac{3}{2}\alpha) \beta \kappa^{n-1}}
  \end{align}

  For the third line, by \eqref{smoothing2}, \eqref{etaf} and assumption (5), we
  have
  \begin{align} \label{NMcondition2_3}
    l_3 &= \left\| D\mathscr{F}[u_{n-1}](Id - S_{t_{n-1}}) \eta[u_{n-1}] \mathscr{F}[u_{n-1}] \right\|_{\mathscr{Y}^{m-2}} \nonumber \\
        &\leq C t_{n-1}^4 \left\| \eta[u_{n-1}] \mathscr{F}[u_{n-1}]\right\|_{\mathscr{Y}^{m+2}} \nonumber \\
        &\leq C t_{n-1}^4 (1 + \left\| u_{n-1}\right\|_{\mathscr{Y}^{m+2}}) \nonumber \\
        &\leq C v e^{2\beta\kappa^{n-1}(\alpha - 2)}
  \end{align}

  thus, in order to show that \eqref{NMcondition2} is true, we want $l_1 + l_2 +
  l_3 \leq v e^{-2\mu\beta\kappa^n}$, i.e.
  \begin{equation*}
    (Cve^{(1 - 2\mu)2\beta\kappa^{n-1}} + C v^{\frac{3}{2}} e^{(-\frac{5}{2}\mu + \frac{3}{2}\alpha) \beta \kappa^{n-1}} + v e^{2\beta\kappa^{n-1}(\alpha - 2)}) < v e^{-2\mu\beta\kappa^n},
  \end{equation*}
  Thus we need
  \begin{equation} \label{NMcondition2}
    C(e^{(1-2\mu+\mu \kappa) 2\beta\kappa^{n-1}} + v^{\frac{1}{2}}e^{(-3\mu + \alpha + \mu \kappa) 2\beta \kappa^{n-1}} + v e^{(\alpha -2 + \kappa)2\mu\beta\kappa^{n-1}}) < 1,
  \end{equation}
  which can be satisfied if $\mu, \kappa$ and $\alpha$ satisfies
  \begin{equation}
    \begin{cases}
      1 - 2\mu + \mu \kappa < 0, \\
      -3\mu + \alpha + \mu \kappa < 0,\\
      \alpha - 2 + \kappa < 0.
    \end{cases}
  \end{equation}
  and $\beta$ is picked large enough.

  As for $P_3(n)$, by \eqref{unun}, \eqref{smooth_newton},
  \eqref{smooth_newton}, \eqref{etaf}, assumption (5) and $P_3(j)$ for $j < n -
  1$, we have
  \begin{align} 
    1 + \left\| u_n\right\|_{\mathscr{X}^{m+2}} &\leq 1 + \sum_{j=1}^n \left\| u_j - u_{j-1}\right\|_{m+2} \nonumber \\
                                                &\leq 1 + \sum_{j=1}^n \left\| S_{t_{j-1}}\eta[u_{j-1}] \mathscr{F}[u_{j-1}] \right\|_{m+2} \nonumber \\
                                                &\leq 1 + C \sum_{j=1}^n \left\| \eta[u_{j-1} \mathscr{F}[u_{j-1}]]\right\|_{m+2} \nonumber \\
                                                &\leq 1 + C \sum_{j=1}^n (1 + \left\|u_{j-1}\right\|_{\mathscr{X}^{m+2}}) \nonumber \\
    &\leq 1 + C \sum_{j=1}^n e^{2\alpha\beta\kappa^{j-1}}
  \end{align}
  we need
  \begin{equation*}
    (1 + \left\| u_n\right\|_{m+2}) e^{-2\alpha\beta\kappa^n} < 1,
  \end{equation*}
  that is
  \begin{equation} \label{NMcondition3}
    e^{-2\alpha\beta\kappa^n} + C \sum_{j = 1}^n e^{(1-\kappa)\alpha\beta\kappa^{j-1}} < v.
  \end{equation}
  which is
  \begin{equation*}
    e^{-2\alpha\beta\kappa^n} + C \sum_{j = 1}^n e^{(1-\kappa)\alpha\beta\kappa^{j-1}} < \left\| \mathscr{F}[u_0] \right\|_{m - 2}.
  \end{equation*}
  
  By the same reason as in \eqref{NMcondition1}, we have
  $$
  \sum_{j=1}^n e^{(1-\kappa)\alpha\beta\kappa^{j-1}} \leq
  e^{(1-\kappa)\alpha\beta} + e^{(1-\kappa)\alpha\beta\kappa} +
  e^{(1-\kappa)\alpha\beta\kappa^2} + e^{(1-\kappa)\alpha\beta\kappa^3} +
  e^{(1-\kappa)\alpha\beta\kappa^4} + \frac{e^{(1-\kappa)\alpha\beta6\kappa}}{1
    - e^{(1-\kappa)\alpha\beta\kappa}}
  $$
  can be achieved if $\kappa > \sqrt[3]{5}$ and $\beta$ is large enough.

  Above all, in oder to make sure that \eqref{NMcondition1},
  \eqref{NMcondition2} and \eqref{NMcondition3} are true, we nee the following
  constrictions for $\kappa, \alpha$ and $\mu$:
  \begin{equation} \label{constrictions}
    \begin{cases}
      \mu > 1, \\
      \kappa > \sqrt[3]{5}, \\
      1 - 2\mu + \mu \kappa < 0, \\
      -3\mu + \alpha + \mu \kappa < 0, \\
      \alpha - 2 + \kappa < 0.
    \end{cases}
  \end{equation}
  and $\beta$ is large enough.

  One possible solution for \eqref{constrictions} is $\kappa = 1.75, \mu = 5$
  and $\alpha = 0.05$ and $\beta$ is large enough.

  Up to this point, we have finished the proof for induction. By letting $n
  \rightarrow \infty$, the second assumption
  $\left\|\mathscr{F}[u_n]\right\|_{\mathscr{X}^{m-2}} \leq
  ve^{-2\mu\beta\kappa^n}$ leads to a solution $u^* \in \mathscr{X}^{m-2}$ such that
  $\mathscr{F}[u^*] = 0$, and the convergence is superexponential. Moreover, by
  the discussion in \eqref{NMcondition1}, we have
  $$
  \left\| u^* - u_0\right\|_m \leq Cv = C \left\|
    \mathscr{F}[u_0]\right\|_{m-2},
  $$
  which completes the proof.
\end{proof}
\begin{remark}
  Although the result $\left\| u^* -
    u_0\right\|_m \leq Cv = C \left\| \mathscr{F}[u_0]\right\|_{m-2}$ is a bit
  surprising in the sense that the
  higher regularity norm is bounded by the lower one, but this inequality is
  actually justified by the bounds from even higher regularity required in
  the assumption.
\end{remark}

%% \bibliographystyle{alpha}\newcommand{\etalchar}[1]{$^{#1}$}

%% \bibliography{theorical_ref}{}

\end{document}